\newcommand{\E}{\mathbb{E}}
\newcommand{\R}{\mathbb{R}}
\newcommand{\Px}{\mathbb{P}}
\newcommand{\N}{\mathbb{N}}
\newcommand{\cT}{\mathcal{T}}
\newcommand{\cI}{\mathcal{I}}
\newcommand{\cU}{\mathcal{U}}
\newcommand{\cZ}{\mathcal{Z}}
\newcommand{\cA}{\mathcal{A}}
\newcommand{\cE}{\mathcal{E}}
\newcommand{\cH}{\mathcal{H}}
\newcommand{\cX}{\mathcal{X}}
\newcommand{\card}{\textup{Card}}
\newtheorem{theorem}{Theorem}[section]
\newtheorem{lemma}[theorem]{Lemma}
\newtheorem{proposition}[theorem]{Proposition}
\newtheorem{definition}[theorem]{Definition}
\newtheorem{example}[theorem]{Example}
\newtheorem{remark}[theorem]{Remark}
\begin{document}

\title{A generic construction for high order approximation schemes of semigroups using random grids}
\author{Aurélien Alfonsi\footnote{Universit\'e Paris-Est, Cermics (ENPC), INRIA, F-77455 Marne-la-Vall\'ee, France. email: {\tt aurelien.alfonsi@enpc.fr}
  } ~and Vlad Bally\footnote{LAMA (UMR CNRS, UPEMLV, UPEC), MathRisk INRIA, Universit\'e Paris-Est. email: {\tt vlad.bally@u-pem.fr}}}

\maketitle

\noindent {Keywords: approximation schemes, random grids, parametrix, Monte-Carlo methods. }\\
\noindent {AMS: 60H35, 65C30, 65C05, 65C20}
\begin{abstract}
Our aim is to construct high order approximation schemes for general semigroups of
linear operators $P_{t},t\geq 0$. In order to do it, we fix a time horizon $T
$ and the discretization steps $h_{l}=\frac{T}{n^{l}},l\in \N$ and we suppose
that we have at hand some short time approximation operators $Q_{l}$ such
that $P_{h_{l}}=Q_{l}+O(h_{l}^{1+\alpha })$ for some $\alpha >0$. Then, we
consider random time grids $\Pi (\omega )=\{t_0(\omega )=0<t_{1}(\omega )<...<t_{m}(\omega )=T\}$ such that for all $1\le k\le m$, $t_{k}(\omega )-t_{k-1}(\omega )=h_{l_{k}}$
for some $l_{k}\in \N$, and we associate the approximation discrete semigroup 
$P_{T}^{\Pi (\omega )}=Q_{l_{n}}...Q_{l_{1}}.$ Our main result is the following: for any approximation order $\nu $, we
 can construct random grids $\Pi_{i}(\omega )$ and coefficients $c_{i}$, with $i=1,...,r$ such that 
\[
P_{t}f=\sum_{i=1}^{r}c_{i}\E(P_{t}^{\Pi _{i}(\omega )}f(x))+O(n^{-\nu})
\]%
with the expectation concerning the random grids $\Pi _{i}(\omega ).$ Besides, $\card(\Pi _{i}(\omega ))=O(n)$
and the complexity of the algorithm is of order $n$, for any order
of approximation $\nu$. The standard example concerns diffusion processes, using the Euler approximation for~$Q_l$.
In this particular case and under suitable conditions, we are 
able to gather the terms in order to produce an estimator of $P_tf$ with finite variance.
However, an important feature of our approach is its universality in the sense that
it works for every general semigroup $P_{t}$ and approximations. Besides, approximation schemes sharing the same $\alpha$ lead to 
the same random grids $\Pi_{i}$ and coefficients $c_{i}$. Numerical illustrations are given for ordinary differential equations, piecewise deterministic Markov processes and diffusions. 
\end{abstract}
\bigskip

\section{Introduction}

We consider a semigroup of linear operators $(P_{t},t\geq 0)$ and we want to
construct high order approximation schemes based on some random grids.
Before presenting our general result, we would like to present the popular
example of diffusion processes and of approximation schemes of Euler type.
Consider the diffusion process%
\begin{equation}\label{i1}
dX_{t}=\sum_{j=1}^{d}\sigma _{j}(X_{t})dW_{t}^{j}+b(X_{t})dt,
\end{equation}
where $W$ is a $d$-dimensional Brownian motion and $\sigma_j,b:\R^d \to \R^d$ are smooth vector fields.
Our aim is to construct an approximation scheme for the semigroup $P_{t}f(x)=\E(f(X_{t}(x)))$, where $X_{t}(x)$ is the diffusion process starting
from $x$. Given the time horizon $T>0$ and the time step $h=\frac{T}{n}$ one
constructs the Euler scheme of step $h$ by%
\[
X_{(k+1)h}^{n}=X_{kh}^{n}+\sum_{j=1}^{d}\sigma
_{j}(X_{k}^{n})(W_{(k+1)h}^{j}-W_{kh}^{j})+b(X_{k}^{n})h
\]%
Then one constructs the approximation semigroup $%
P_{t}^{n}f(x)=\E(f(X_{kh}^{n}(x)))$ for $kh\leq t<(k+1)h$. It is well
known that%
\begin{equation}
\left\vert P_{T}f(x)-P_{T}^{n}f(x)\right\vert \leq \frac{C}{n}\left\Vert
f\right\Vert _{4,\infty }  \label{I0}
\end{equation}%
where $\left\Vert f\right\Vert _{4,\infty }$ is the supremum norm of $f$ and
its derivatives up to order four. The proof is based on Lindeberg method
(or Duhamel's principle):%
\begin{equation}
P_{T}f(x)-P_{T}^{n}f(x)=%
\sum_{k=0}^{n-1}P_{[n-(k+1)]h}(P_{h}-P_{h}^{n})P_{kh}^{n}f(x).  \label{I1}
\end{equation}%
Since%
\begin{equation}
\left\vert P_{h}f(x)-P_{h}^{n}f(x)\right\vert \leq C\left\Vert f\right\Vert
_{4,\infty }h^{2},  \label{I2}
\end{equation}%
the above inequality gives (\ref{I0}). If we want to go further we develop $P_{T-(k+1)h}=P_{[n-(k+1)]h}$ as well and we obtain%
\begin{eqnarray}
P_{T}f(x)
&=&P_{T}^{n}f(x)+%
\sum_{k=0}^{n-1}P_{[n-(k+1)]h}^{n}(P_{h}-P_{h}^{n})P_{kh}^{n}f(x)  \label{I3} \\
&&+%
\sum_{k_{1}<k_{2}<n}P_{[n-(k_{2}+1)]h}(P_{h}-P_{h}^{n})P_{(k_{2}-k_{1}-1)h}^{n}(P_{h}-P_{h}^{n})P_{k_{1}h}^{n}f(x).
\nonumber
\end{eqnarray}%
The last term is of order $n^{-2}$, which gives an error of order two if we only keep the two first terms. And one
may continue and go further in the development: one develops $%
P_{T-(k_{2}+1)n}$ and so on. This is similar to the development made in the
classical parametrix method. But now a problem appears: how to compute $%
P_{h}-P_{h}^{n}$  in the second term? In the classical parametrix method, one uses an
integration by parts formula based on the infinitesimal operator of the
diffusion semigroup. Here we follow another way: we develop $P_{h}-P_{h}^{n}$
itself in the same way as for $P_{t}-P_{t}^{n}$ in order to improve the
order of approximation. So, in our approach we have two simultaneous
developments: an "horizontal" one as in (\ref{I3}) and a "vertical" one which
is used in order to refine $P_{h}-P_{h}^{n}.$ And in both cases we continue
the development up to the moment that we have obtained the order of
approximation~$\nu \in \N^*$ that we desire. The control of this two folds "Taylor
expansion" gives rise to a rather intricate combinatorial problem. The
natural way to describe this is to use some trees which are constructed by
backward recurrence, which is a little bit tricky. A second problem
concerns the computation of the sum $\sum_{k=0}^{n-1}$ and more generally of sums of the form $\sum_{0\leq k_{1}<...<k_{m}<n}$. Computing all these terms would make the use of the development~\eqref{I3} inefficient for computational purposes. 
The idea is then to randomize $0\leq k_{1}<...<k_{m}<n$ by using order
statistics and then to use the Monte Carlo method in order to compute it.
This is the reason for which random grids come on in our schemes.

These developments are made in Sections~\ref{Sec_BD} and~\ref{Sec_TO}. Eventually, we show that for any order $\nu\in \N^*$, there exists $r\in \N^*$, coefficients $c_1,\dots, c_r\in \R$ and random grids $\Pi^{i}_\nu(\omega )\subset \{jT/n^{l}:j\leq n^{l}\},i=1,...,r$ with  $\card(\Pi _{i})\leq C_{\nu }\times n$ for some $C_\nu>0$ such that
$$P_Tf=\sum_{i=1}^r c_i \E[P^{\Pi^i_\nu}_Tf]+O(n^{-\nu}).$$
This is our first main result, stated precisely in Theorem~\ref{MAIN}, where we give an explicit construction of the coefficients~$c_i$ and of the time-grids~$\Pi^i_\nu$. Thus, the complexity of our algorithm remains of order $r\times C_\nu \times n$, for any order~$\nu$ of precision. However, $r\times C_\nu$ seriously
increases with $\nu$, and one has to take care about this in the complexity analysis for the choice of $\nu$.

To use the approximation in practice, one has to work with a probabilistic representation of the semigroup. On the discretization time grid $\Pi=\{t_0=0<t_1<\dots<t_m=T\}$, we define the corresponding Euler scheme
by $X_{0}^{\Pi }=x$ and 
\begin{equation}
X_{t_{i+1}}^{\Pi }=X_{t_{i}}^{\Pi }+\sum_{j=1}^d \sigma_j (X_{t_{i}}^{\Pi
})(W^j_{t_{i+1}}-W^j_{t_{i}})+b(X_{t_{i}}^{\Pi })(t_{i+1}-t_{i}).
\label{i2}
\end{equation}%
Since the grids are independent from~$W$, we then have for smooth functions~$f$
\begin{equation*}
\E[f(X_{T})]=\sum_{i=1}^{r}c_{i}\E[f(X_{T}^{\Pi^{i}_\nu})]+O(n^{-\nu }),
\end{equation*}%
and the right hand side gives an estimator that can be computed in~$O(n)$ operations. Then, an important issue is the variance of this estimator. In Section~\ref{Sec_Prob_Repr}, we present a specific organization of the algorithm which allows to get a finite variance. Theorem~\ref{MAIN} proposes a particular way to gather the terms, i.e. a partition $\cI_1,\dots,\cI_q$ of $\{1,\dots,r\}$, and Theorem~\ref{varience} shows that the variance of $\sum_{i\in \cI_{q'}} c_i f(X_{T}^{\Pi^{i}_\nu})$ is bounded for all $q'$, so that the variance of the estimator is bounded.

An important and nice feature of our approach is that it is generic and provides an algorithm that can be used in many contexts.  Indeed, in the previous approach
the only fact which is necessary in order to make the algorithm work is to
have at hand a short time approximation $P_{h}^{n}$ for $P_{h}$ such that (%
\ref{I2}) holds. The construction of the grids $\Pi_i$ and the coefficients $c_i$ only depends on this.  This leads us to consider the following abstract framework.
Let $F$ be a vector space endowed with a family of seminorms $\|\|_k$, $k\in \N$, such that $\|f\|_k\le \|f\|_{k+1}$. We consider a family $(P_t,t\ge 0)$ of linear operators on~$F$ that have the semigroup property, i.e $P_0f=f$ and $P_{t+s}f=P_tP_sf$ for all $f\in F$, $t,s\ge 0$. Our goal is to approximate the semigroup $P_Tf$ and build, for any $\nu \in \N^*$ a linear operator $\hat{P}^{\nu,n}_T$ such that
$$\exists C>0,k \in \N^*,  \forall f \in F, \  \|P_Tf- \hat{P}^{\nu,n}_Tf\|_0 \le C\|f\|_k n^{-\nu}.$$
To achieve this goal, we suppose that we have at our hands a family of linear operators $Q_l:F\to F$, $l\in \N$, such that we have for some $\alpha>0$ and $\beta \in \N$,
\begin{equation}\tag{$\overline{H_{1}}$}
  \forall l,k\in \N, \exists C>0,\forall f \in F,\  \left\Vert (P_{h_{l}}-Q_{l})f\right\Vert _{k}\leq
C \left\Vert f\right\Vert _{k+\beta }h_{l }^{1+\alpha },
\label{barH1}
\end{equation}
where $h_l=T/n^l$. We note $Q_{l}^{[0]}$ the identity operator on~$F$ and, for $k\in \N^*$, $Q_{l}^{[k]}=Q_{l}^{[k-1]}Q_{l}$ the operator obtained by applying $k$ times the operator~$Q_l$. We also assume that all these operators satisfy
\begin{equation}\tag{$\overline{H_{2}}$}
\forall l,m \in \N, \exists C>0, \ \max_{0\le k \leq n^l}\Vert Q_l^{[k]}f\Vert _{m}+\sup_{t\leq T}\Vert P_{t}f\Vert _{m }\leq C\Vert f\Vert _{m}.  \label{barH2}
\end{equation}
 The Euler scheme discussed before corresponds to $Q_l=P_{h_l}^{n^l}$ with $h_l=T/n^l$, and the approximation of order~$2$ only involves $Q_1$.
But, if we want to construct higher order schemes as in (\ref{I3}), we have
to mix operators $Q_{l}$ for $l\in \N^*$. This leads us to consider
grids $\Pi =\{0\leq t_{1}<...<t_{m}=T\}$ with the property that for every $
i=1,...,n,$ we have $t_{i}-t_{i-1}=h_{l_{i}}$ for some $l_{i}\in \N$. Then
we define $P_{T}^{\Pi }=Q_{l_{n}}Q_{l_{n-1}}...Q_{l_{1}}$. Notice that $
P_{T}^{\Pi }$ is built by using the "short time" approximation operators $
Q_{l},l\in \N$ only. Eventually, we show (see Theorem~\ref{MAIN}) that for any order $\nu\in \N^*$, there exists coefficients $c_1,\dots, c_r\in \R$ and random grids $\Pi^{i}_\nu(\omega )\subset \{jT/n^{l}:j\leq n^{l}\},i=1,...,r$ with  $\card(\Pi _{i})\leq C_{\nu }\times n$ for some $C_\nu>0$ and constants $C>0$ and $k\in \N$ such that
$$\forall f \in F, \ \left\|P_Tf-\sum_{i=1}^r c_i \E[P^{\Pi^i_\nu}_Tf]\right\|_0\le C\|f\|_{k}n^{-\nu}.$$
 We
stress that the coefficients $c_{i}$ and the grids $\Pi _{i}(\omega),i=1,..,r$ does not depend on $P_{t}$ nor on the specific form of $Q_{l}$: only the order of approximation $\nu$ and $\alpha$ in (\ref{barH1}) matter. Then, we give several examples of applications besides the Euler scheme: the Ninomiya Victoir scheme for diffusion processes (then $\alpha =2$ and $\beta =6$), or approximation schemes for
ordinary differential equations and piecewise deterministic Markov processes.\\

The approximations introduced in this paper are of any order $\nu$ with a computation time in $O(n)$. To calculate then $P_Tf$ with a precision $\varepsilon$, we naturally use a Monte-Carlo method with $n\sim \varepsilon^{-1/\nu}$ and $M\sim\varepsilon^{-2}$ samples, which has a computational cost of $O(\varepsilon^{-(2+1/\nu)})$. Since $\nu$ is arbitrary large, we will denote by $O(\varepsilon^{-2+})$ this complexity.  There is a large literature in numerical probability dedicated to construct either unbiased estimators of~$P_Tf$, leading then to a computational cost of $O(\varepsilon^{-2})$ (but this is only true in the case of finite variance), or approximated estimators leading to a computational cost of $O(\varepsilon^{-2+})$. Let us give an overview of the different methods to position our work.

When $Q_1^{[n]}f=P_Tf+c_1n^{-1}+\dots+c_\nu n^{1-\nu} +O(n^{-\nu})$, the Richardson-Romberg extrapolation provides an approximation of order~$\nu$, and Pagès~\cite{Pages} shows in the case of the Euler scheme for SDEs how to get with this method an estimator with bounded variance. In a different way, extending Fujiwara's method, Oshima et al.~\cite{OTV} propose approximations for SDEs of any order by considering linear combinations of Ninomiya and Victoir schemes with different time steps. These approximations have very similar properties to the ones presented in this paper, but they are obtained with a significantly different approach: they are constructed with linear combinations of schemes using uniform grids obtained with multiples of the same time-step, while our approximations uses non-uniform time grids that are refined at some random places. Also, the principle of our methodology is not to find a combination of schemes that cancels the terms of orders $n^{-i}$ for $i=1,\dots,\nu-1$, but instead to calculate the contribution of all these terms.

The Multi-Level Monte-Carlo (MLMC) method proposed by Giles~\cite{Giles} that generalizes the statistical Romberg method of Kebaier~\cite{Kebaier} gives another generic way to approximate $P_Tf$ in $O(\varepsilon^{-2+})$.  McLeish~\cite{McLeish} and Rhee and Glynn~\cite{RhGl} have then proposed an unbiased estimator constructed with similar ideas, see also the recent work of Vihola~\cite{Vihola}. Contrary to the previous approaches, the MLMC method does not rely on the development of high order approximations since it already works using the Euler scheme. It stems from a clever probabilistic representation and variance analysis. The MLMC method is in fact complementary to high order approximations. For instance, Lemaire and Pagès~\cite{LePa} have proposed estimators combining the MLMC method and the Richardson-Romberg extrapolation, improving the asymptotic complexity of the standard MLMC method with the Euler scheme.

Last, there is a stream of papers that develop unbiased estimators for $P_Tf$ in the case of SDEs. We have already mentioned the unbiased estimators \cite{McLeish,RhGl} that are obtained as telescopic series and that use a discretization scheme with more and more refined time grids. Another direction of research is to try to write $P_Tf=\E[W_Tf(\tilde{X}_T)]$, where $\tilde{X}_T$ is a simulatable process (e.g. a Euler scheme) and $W_T$ is some computable weight. By using a change of measure and a rejection algorithm, Beskos and Roberts~\cite{BeRo} have proposed such a method for one-dimensional diffusions. Recently, Bally and Kohatsu-Higa~\cite{BK-H} have given a probabilistic representation of the parametrix method that opens the road to construct unbiased estimators for a wide class of Markov processes, including stopped or reflected diffusions~\cite{FKHL,AHKH}. By using a different approach, Henry-Labordère et al.~\cite{HLTT} have lately proposed unbiased estimators for SDEs that present nonetheless a similar structure as the ones obtained with the parametrix method. A common important issue with all these unbiased estimators is to come up with a bounded variance estimator. This problem is tackled by Andersson and Kohatsu-Higa~\cite{AnKH} who provide a finite variance estimator for the parametrix method, see also Agarwal and Gobet~\cite{AgGo}. The approximation method that we develop in this paper can be seen somehow as a discrete version of the parametrix method. Instead of considering a continuous time approximating semigroup  $P^x_t$, and iterate indefinitely the formula $P_Tf-P^x_Tf=\int_0^TP^x_t(L-L^x)P_sf ds$ ($L$ and $L^x$ are the corresponding infinitesimal generators), we iterate the equality $P_Tf- Q_1^{[n]} =\sum_{k=0}^{n-1}P_{(n-(k+1))h_1}(P_{h_1}-Q_1) Q_1^{[k]}$ a finite number of times until to achieve an approximation of order $\nu$. The main advantage of our approximation schemes is that their construction is generic and only depends on the parameter~$\alpha$ in~\eqref{barH1}, while the weights involved in these  unbiased estimators really depends on the underlying SDE or Markov process. This makes our approach much easier to implement for an whole class of processes. Besides, the discrete structure enables us to gather the correcting terms in a way to get a finite variance estimator as already mentioned.

The paper is organized as follows. In Section~\ref{Sec_BD}, we introduce some notation and present the recipe to construct iteratively high-order approximation schemes. Section~\ref{Sec_TO} introduces trees,  random trees and random grids that we use to construct our approximation schemes. It also prepares the variance analysis by gathering the terms of the approximations in an appropriate way. Theorem~\ref{MAIN} states our first main result. Section~\ref{Sec_Prob_Repr} specify these approximations in some cases by using particular probabilistic representations of semigroups, for instance in the case of the Euler scheme for SDEs. In this case, we state in Theorem~\ref{varience} our second main result that ensures that our estimators have a finite variance. Last, we provide in Section~\ref{Sec_Num} numerical examples of our approximations that illustrates the broad application of our approach.

\bigskip

\section{Basic development}\label{Sec_BD}

We first introduce notation that will be used through the paper. We denote by $C_{b}^{\infty }(\R^{d})$ the space
of smooth functions from $\R^{d}$ to $\R^{d}$ which are bounded and have
bounded derivatives of any order. And we work with the norms 
\begin{equation}\label{def_multiindex}
\left\Vert f\right\Vert _{k,\infty }=\sum_{0\leq \left\vert \gamma
\right\vert \leq k}\sup_{x\in \R^{d}}\left\vert \partial ^{\gamma
}f(x)\right\vert
\end{equation}%
where for a multi-index $\gamma =(\gamma _{1},...,\gamma _{m})\in \cup_{m'\in \N}\{1,...,d\}^{m'}$ we denote $$\left\vert \gamma \right\vert =m \text{ and } \partial
^{\gamma }=\partial _{x_{\gamma _{1}}}...\partial _{x_{\gamma _{m}}}.$$
In many proofs of the paper, we will have to deal with derivatives of composed functions. Let $f:\R^d \rightarrow \R$ and $g:\R^{d}\rightarrow \R^{d}$ be smooth functions. We note $g^j$ with $j\in \{1,\dots,d\}$ the coordinates of $g$. Then, one may prove by recurrence that 
\begin{equation}
\partial ^{\alpha }[f\circ g]=\sum_{\left\vert \beta \right\vert \leq
\left\vert \alpha \right\vert }(\partial ^{\beta }f)(g)P_{\alpha ,\beta }(g)
\label{APP3}
\end{equation}%
with%
\begin{equation}
  P_{\alpha ,\beta }(g)=\sum c_{\alpha ,\beta }((\gamma_{1},j_1),\dots,( \gamma_{k},j_k)) \prod_{i=1}^{k}\partial ^{\gamma _{i}}g^{j_i} , \label{APP4}
\end{equation}%
where the sum is over all $k=1,\dots,|\alpha|$, $j_1,\dots,j_k\in \{1,\dots,d\}$ and (non void) multi-indices $\gamma_1,\dots,\gamma_k \in \cup_{m\ge 1} \{1,\dots,d\}^m$ such that $ \sum_{i=1}^k |\gamma_i|\le |\alpha|$.  For $f:\R^d\rightarrow \R^{d}$, we note $\partial ^{\alpha }[f\circ g]:=(\partial ^{\alpha }[f^1\circ g],\dots,\partial ^{\alpha }[f^d\circ g])$, and the same formula applies coordinate by coordinate. This result is known in the literature as the Faà di Bruno's formula, but we do not need in this work to use the explicit formula for the coefficients $c_{\alpha,\beta}$, see Constantine and Savits~\cite{CoSa}.

We consider a semigroup of linear operators $P_{t}:C_{b}^{\infty
}(\R^{d})\rightarrow C_{b}^{\infty }(\R^{d})$ which satisfies%
\begin{equation*}
P_{t+s}=P_{t}P_{s}.
\end{equation*}
Let $T>0$ be a time horizon which is fixed in the sequel. We are interested
in building approximation schemes for $P_{T}.$ For $n\in \mathbb{N}^*$
and $l\in \mathbb{N}$, we define 
\begin{equation*}
h_{l }=\frac{T}{n^{l }},\qquad h=h_{1}=\frac{T}{n},\quad h_{0}=T.
\end{equation*}
We suppose that we are given a sequence of linear operators $Q_{l}:C_{b}^{\infty }(\R^{d})\rightarrow C_{b}^{\infty }(\R^{d}),l\in \N$ which will be used
in order to construct our approximation schemes. The operator $Q_{l}$ is supposed to be an
approximation of $P_{h_{l}}$, more precisely we assume that for every $k\in
\N $ and $l\in \N$%
\begin{equation}\tag{$H_{1}$}
  \forall l,k\in \N, \exists C>0,\forall f \in C^\infty_b(\R^d),\  \left\Vert (P_{h_{l}}-Q_{l})f\right\Vert _{k,\infty }\leq
C \left\Vert f\right\Vert _{k+\beta ,\infty }h_{l }^{1+\alpha }
\label{b00}
\end{equation}%
for some $\alpha >0,\beta \in \mathbb{N}.$ In the case of the Euler scheme we
have $\alpha =1,$ and $\beta =4$ (see Example~\ref{Example_Euler} below). We denote%
\begin{equation*}
\Delta _{h_{l}}=P_{h_{l}}-Q_{l}
\end{equation*}%
and%
\begin{equation*}
P_{h_{l}}^{h_{l}}=Q_{l}\quad \text{ and }\quad P_{kh_{l}}^{h_{l}}=Q_{l}...Q_{l}\quad k%
\text{ times.}
\end{equation*}%
Thus, we produce a discrete semigroup $P_{t}^{h_{l}},t=kh_{l}.$
We will use the following regularity hypothesis: 
\begin{equation}\tag{$H_{2}$}
\forall l,m \in \N, \exists C>0, \ \max_{kh_{l}\leq T}\Vert P_{kh_{l}}^{h_{l}}f\Vert _{m,\infty
}+\sup_{t\leq T}\Vert P_{t}f\Vert _{m,\infty }\leq C\Vert f\Vert _{m,\infty
}.  \label{b000}
\end{equation}
\begin{remark} We could more generally assume that the left hand-side of~\eqref{b000} is upper bounded by $ C\Vert f\Vert_{m+\tilde{\beta},\infty}$ for some $\tilde\beta \in \N$: this would not modify the main results of Sections~\ref{Sec_BD} and~\ref{Sec_TO}. However, this generalization is not relevant for usual semigroups that already satisfy this bound for $\tilde{\beta}=0$. For simplicity, we only consider this case.
\end{remark}
\begin{example}\label{Example_Euler}(Euler scheme for diffusion processes) We work with the
diffusion process~(\ref{i1}). We assume that  $\sigma_j,b \in C^{\infty }(\R^{d})$ and the derivatives of any order of $\sigma_j$ and $b$ are bounded. In particular they have linear growth. By standard results on stochastic flows (see Proposition 2.1 and Theorem 2.3 of~\cite{IW}, Chapter 5), we have~\eqref{b000}. 

We denote by $P_{t}$ the semigroup a diffusion process~\eqref{i1} and $P_{h}^{h}f(x):=\E[f(x+b(x)h+\sigma(x)W_h)]$, $P_{kh}^{h}:=(P_{h}^{h})^{k}$ the (discrete) semigroup of the Euler scheme
of step $h$. Then
\begin{equation*}
\Delta_{h}f(x)=P_{h}f(x)-P_{h}^{h}f(x)=\int_{0}^{h}%
\int_{0}^{s}\E((L^{2}f)(X_{r}(x)) - \E((L_x^{2}f)(x+b(x)r+\sigma(x)W_r))drds,
\end{equation*}%
where $L$ is the infinitesimal operator of the semigroup $P_{t}$ and $L_x$ is the semigroup corresponding to the Euler scheme, with frozen coefficients $b(x)$ and $\sigma(x)$.  By Theorem~4.4~\cite{KunitaSF}, we can take a modification of the solution such that the flow $x\rightarrow X_{t}(x)$ is infinitely differentiable  with derivatives which have finite moments of any order. Thus
we have
\begin{equation}
\forall k \in \N, \exists C>0, \ \left\Vert \Delta _{h}f\right\Vert _{k,\infty }\leq C\left\Vert f\right\Vert
_{k+4,\infty }h^{2}  \label{b0}
\end{equation}
Thus, the property~\eqref{b00} is satisfied with $\alpha =1,\beta =4.$
\end{example}

We come back to the general case and we present the basic decomposition that
we will use. We use the linearity of the operators in order to get%
\begin{eqnarray*}
P_{T}-P_{T}^{h_{1}}
&=&P_{nh_{1}}-P_{nh_{1}}^{h_{1}}=%
\sum_{k=0}^{n-1}P_{(n-k)h_{1}}P_{kh_{1}}^{h_{1}}-P_{(n-(k+1))h_{1}}P_{(k+1)h_{1}}^{h_{1}}
\\
&=&\sum_{k=0}^{n-1}P_{(n-(k+1))h_{1}}\Delta _{h_{1}}P_{kh_{1}}^{h_{1}}.
\end{eqnarray*}%
Iterating this equality, we get for every $1\leq m\leq n$, 
\begin{equation}
P_{T}=P_{T}^{h_{1}}+\sum_{i=1}^{m-1}I_{i}^{h_{1}}(n)+R_{m}^{h_{1}}(n)
\label{b1}
\end{equation}%
with (convention $k_{0}=-1$ and $\prod_{j=0}^{i-1}A_{j}=A_{i-1}\dots A_{0}$
for non commutative operators $A_{j}$)%
\begin{eqnarray*}
I_{i}^{h}(n) &=&\sum_{0\leq
k_{1}<...<k_{i}<n}P_{(n-(k_{i}+1))h}^{h}\prod_{j=0}^{i-1}(\Delta
_{h}P_{(k_{j+1}-k_{j}-1)h}^{h}) \\
R_{m}^{h}(n) &=&\sum_{0\leq
k_{1}<...<k_{m}<n}P_{(n-(k_{m}+1))h}\prod_{j=0}^{m-1}(\Delta_{h}P_{(k_{j+1}-k_{j}-1)h}^{h})
\end{eqnarray*}%
Then, using (\ref{b00}) and~\eqref{b000} we get for $h\in\{h_l,l\in\N\}$,
\begin{eqnarray}
\left\Vert R_{m}^{h}(n)f\right\Vert _{\infty } &\leq &C^{m}\left\Vert
f\right\Vert _{\beta m,\infty }h^{(1+\alpha )m}\times \binom{n}{m}\leq \frac{%
C^{m}\left\Vert f\right\Vert _{\beta m,\infty }}{m!}h^{(1+\alpha )m}n^{m}.
\label{b2_1} 
\end{eqnarray}%
Thus, we get an error of order $O(h^{\alpha m})=O(n^{-\alpha m})$ for $%
h=h_{1}=T/n$.

Formula (\ref{b1}) represents a discretization with step $h_{1}>0$ on the
interval $[0,T]=[0,h_{0}]$. In the sequel we will use similar developments
on intervals $[0,h_{l}]$ with step $h_{l+1}$. So, using the above
formula with $T=h_{l}$ and with step $h_{l+1}$ instead of $h=h_{1}$,
we obtain 
\begin{equation}
P_{h_{l }}=P_{h_{l }}^{h_{l +1}}+\sum_{i=1}^{m-1}I_{i}^{h_{l
+1}}(n)+R_{m}^{h_{l +1}}(n),  \label{b1_bis}
\end{equation}%
with $l \in \mathbb{N}$. We then have from~\eqref{b2_1} with $h=h_{l
+1}=Tn^{-(l +1)}$, 
\begin{equation}
\left\Vert R_{m}^{h_{l +1}}(n)f\right\Vert _{\infty }\leq \frac{%
C^{m}\left\Vert f\right\Vert _{\beta m,\infty }T^{(1+\alpha )m}}{m!}\frac{1}{%
n^{((1+\alpha )l +\alpha )m}}.  \label{b2_2}
\end{equation}%
Similarly, we get%
\begin{equation}
\left\Vert I_{i}^{h_{l +1}}(n)f\right\Vert _{\infty }\leq \frac{%
C^{m}\left\Vert f\right\Vert _{\beta i,\infty }T^{(1+\alpha )i}}{i!}\frac{1}{%
n^{((1+\alpha )l +\alpha )i}}.  \label{b2_3}
\end{equation}
Formula~\eqref{b1} is appealing since it may lead to an approximation of order $O(n^{-\alpha m})$. The natural question is then how to simulate the terms $I_{i}^{h}f(n)$. This raises two problems that we explain now.

\textbf{Problem 1}. It seems cumbersome (time consuming of complexity $O(n^{i})$) to compute the sum defining~$I_i^h(n)$. To avoid this issue,  we will
use a randomization procedure (inspired from~\cite{BK-H} in the framework of the
parametrix method). We fix $i$ and we consider a random variable $\kappa(\omega )=(\kappa _{1}(\omega ),...,\kappa _{i}(\omega ))$ that follows a "discrete order statistics" on $\{0,1,...,n-1\}.$ Precisely, $\kappa$ follows the distribution
\begin{equation*}
\mu _{i}(dk_{1},...,dk_{i})=\frac{1}{\binom{n}{i}}\sum_{0\leq \kappa
_{1}<...<\kappa _{i}<n}\delta _{(\kappa _{1},...,\kappa
_{i})}(dk_{1},...,dk_{i}).
\end{equation*}%
We will use the notation 
\begin{equation*}
\kappa _{i}^{\prime }=\kappa _{i}+1\text{ for }i\geq 1\text{ and }\kappa
_{0}^{\prime }=0.
\end{equation*}%
Then 
\begin{eqnarray}
I_{i}^{h}(n) &=&\sum_{k_{1}<...<k_{i}<n}P_{(n-k_{i}^{\prime
})h}^{h}\prod_{j=1}^{i}(\Delta _{h}P_{(k_{j}-k_{j-1}^{\prime })h}^{h})
\label{b3} \\
&=&\binom{n}{i}\times \E_{\mu _{i}}\left( P_{(n-\kappa _{i}^{\prime
})h}^{h}\prod_{j=1}^{i}(\Delta _{h}P_{(\kappa _{j}-\kappa _{j-1}^{\prime
})h}^{h})\right)  \label{b3'}
\end{eqnarray}

\begin{remark}
  If we look to the equality between the terms in (\ref{b3}) and in~(\ref{b3'}), we see that (\ref{b3'}) gives a way to compute the sum which appears in (\ref{b3}) by the Monte-Carlo method.
  This Monte Carlo avoids the ``curse of dimensionality'' since the discrete simplex of dimension~$i$, $\{0\le k_{1}<...<k_{i}<n\}$, has $O(n^i)$ elements. Besides, the different terms in the sum~(\ref{b3}) have values that may be very close each other leading to a bounded variance. This will be analyzed later on in Subsection~\ref{subsec_variance} for SDEs and the Euler scheme.

  Let us note that this randomization makes the approximation~\eqref{b1} effective. Otherwise, it would have a computational cost of $O(n+\dots+n^{m-1})=O(n^m)$ for a precision in $O(n^{-\alpha m})$ (see~\eqref{b2_1}), exactly as $P^{h_m}_{n^mh_m}$.
  
\end{remark}

\textbf{Problem 2}. The basic element in the above formula is $\Delta_{h},$
and we are not able to simulate directly this quantity, due to $P_{h}$.
To overcome this problem, we will use the fact that the short-time estimate~(\ref{b00}) of the semigroup is more and more precise when $l$ increases since $\alpha>0$: 
\begin{equation*}
\Vert P_{h_{l }}^{h_{l }}-P_{h_{l }}\Vert _{\infty }\leq C\Vert
f\Vert _{\beta ,\infty }h_{l }^{1+\alpha }=O(n^{-(1+\alpha )l }).
\end{equation*}%
Thus, we will construct by backward induction on $l$, some
approximations only based the approximation kernels $Q_{l}$, each of them
involving at most $O(n)$ iterations of these approximations (so we keep a
complexity of order $n$). This is precised by the following lemma, which is
the core of our computations. We will use the following numbers: for $l\in 
\mathbb{N},i\in \mathbb{N}^{\ast }$ and $\nu \in \mathbb{N}^{\ast }$ we
define 
\begin{eqnarray}
q_{i}(l ,\nu ) &=&\nu+ \lceil i-(1+\alpha )(l +1)(i-1)\rceil ,
\label{n1} \\
m(l,\nu ) &=&\lceil \frac{\nu }{(1+\alpha )l+\alpha }\rceil  \label{n1'}
\end{eqnarray}
Here, $\lceil x\rceil =q$ if $x\in (q-1,q]$ is the ceiling function. We
observe that $i\in \mathbb{N}^{\ast }\mapsto q_{i}(l ,\nu )$ is
nonincreasing and therefore $q_{i}(l ,\nu )\leq q_{1}(l ,\nu )=\nu +1$.

\begin{lemma}
\label{lem_rec_approx} Let $l \in \mathbb{N}$ and $\nu _{0}\in \mathbb{N}^{\ast }$. Suppose that we have already a sequence of operators $\hat{P}_{h_{l +1}}^{\nu }$ for $1\leq \nu \leq \nu _{0}+1$ such that 
\begin{equation}
\Vert (\hat{P}_{h_{l +1}}^{\nu }-P_{h_{l +1}})f\Vert _{\infty }\leq
C_{l +1,\nu }\Vert f\Vert _{k(l +1,\nu ),\infty }n^{-\nu },
\label{hyp_rec_err}
\end{equation}%
for some $C_{l +1,\nu }>0$ and $k(l+1,\nu)\in \N$. For $\nu \leq \nu _{0}$, we define 
\begin{equation}
\hat{P}_{h_{l }}^{\nu }=P_{h_{l }}^{h_{l +1}}+\sum_{i=1}^{m(l,\nu
)-1}I_{i}^{\nu ,h_{l +1}}(n),  \label{induction_hatPl}
\end{equation}%
with 
\begin{equation}
I_{i}^{\nu ,h_{l +1}}(n)=\binom{n}{i}\times \E_{\mu _{i}}\left(
P_{(n-\kappa _{i}^{\prime })h_{l +1}}^{h_{l
+1}}\prod_{j=0}^{i-1}\left( (\hat{P}_{h_{l +1}}^{q_{i}(l ,\nu
)}-P_{h_{l +1}}^{h_{l +1}})P_{(\kappa _{j+1}-\kappa _{j}^{\prime
})h_{l +1}}^{h_{l +1}}\right) \right) .  \label{int}
\end{equation}%
Then, we have 
\begin{equation}
\Vert (\hat{P}_{h_{l }}^{\nu }-P_{h_{l }})f\Vert _{\infty }\leq
C_{l ,\nu }\Vert f\Vert _{k(l ,\nu ),\infty }n^{-\nu },
\label{approx_ordre_nu_lem}
\end{equation}%
for some $C_{l ,\nu }>0$ and with
\begin{equation}
k(l ,\nu ) =\max (\beta m(l,\nu ),\max_{i=1}^{m(l,\nu )-1}i\times
k(l +1,q_{i}(l ,\nu ))).  \label{n2}
\end{equation}%
\end{lemma}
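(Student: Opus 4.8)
The plan is to compare the definition~\eqref{induction_hatPl} of $\hat P_{h_l}^{\nu}$ with the \emph{exact} development~\eqref{b1_bis}, taken at step $h_{l+1}$ with $m=m(l,\nu)$: rewriting each $I_i^{h_{l+1}}(n)$ in its order-statistics form~\eqref{b3'}, one obtains
\[
\hat P_{h_l}^{\nu}-P_{h_l}=\sum_{i=1}^{m(l,\nu)-1}\big(I_i^{\nu,h_{l+1}}(n)-I_i^{h_{l+1}}(n)\big)-R_{m(l,\nu)}^{h_{l+1}}(n),
\]
so that it suffices to bound, in $\|\cdot\|_{\infty}$, the remainder and each difference $I_i^{\nu,h_{l+1}}(n)-I_i^{h_{l+1}}(n)$ by a constant times $\|f\|_{k(l,\nu),\infty}n^{-\nu}$. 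Actually I would prove the stronger statement obtained by replacing $\|\cdot\|_{\infty}$ with $\|\cdot\|_{m,\infty}$ and $\|f\|_{k,\infty}$ with $\|f\|_{m+k,\infty}$ throughout \eqref{hyp_rec_err} and \eqref{approx_ordre_nu_lem}, for arbitrary $m\in\N$ (the lemma being the case $m=0$); this strengthening costs nothing in the hypotheses since \eqref{b00} and \eqref{b000} already hold in every seminorm, and it is exactly what makes the inductive step close, see the last paragraph.

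The remainder is immediate from~\eqref{b2_2}: since $m(l,\nu)=\lceil \nu/((1+\alpha)l+\alpha)\rceil$ one has $((1+\alpha)l+\alpha)\,m(l,\nu)\geq\nu$, hence $\|R_{m(l,\nu)}^{h_{l+1}}(n)f\|_{\infty}\leq C\,\|f\|_{\beta m(l,\nu),\infty}\,n^{-\nu}$, and $\beta m(l,\nu)\leq k(l,\nu)$ by~\eqref{n2}. For the differences, the key algebraic move is to write $\hat P_{h_{l+1}}^{q_i(l,\nu)}-P_{h_{l+1}}^{h_{l+1}}=\Delta_{h_{l+1}}+E_i$ with $E_i:=\hat P_{h_{l+1}}^{q_i(l,\nu)}-P_{h_{l+1}}$, and then to expand the noncommutative product over $j$ in~\eqref{int}. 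The single term in which $E_i$ never occurs is precisely $I_i^{h_{l+1}}(n)$ in the form~\eqref{b3'}, so $I_i^{\nu,h_{l+1}}(n)-I_i^{h_{l+1}}(n)$ is, inside $\binom{n}{i}\E_{\mu_i}$, a sum over nonempty subsets $S\subseteq\{0,\dots,i-1\}$ of words carrying $|S|$ copies of $E_i$, $i-|S|$ copies of $\Delta_{h_{l+1}}$, interspersed with the iterates of $Q_{l+1}$ (of length $\leq n$) appearing in~\eqref{int}. By~\eqref{b000} each such iterate has seminorm operator norm bounded by a constant, uniformly in the realisation of $\kappa$; by~\eqref{b00}, $\Delta_{h_{l+1}}$ maps $\|\cdot\|_{m+\beta,\infty}$ into $\|\cdot\|_{m,\infty}$ with a factor $h_{l+1}^{1+\alpha}=O(n^{-(1+\alpha)(l+1)})$; and by the (seminorm-strengthened) induction hypothesis $E_i$ maps $\|\cdot\|_{m+k(l+1,q_i(l,\nu)),\infty}$ into $\|\cdot\|_{m,\infty}$ with a factor $O(n^{-q_i(l,\nu)})$. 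Applying these estimates to the successive factors, a word with $p=|S|\geq1$ copies of $E_i$ is controlled in $\|\cdot\|_{\infty}$ by $C\,\|f\|_{p\,k(l+1,q_i(l,\nu))+(i-p)\beta,\infty}\,n^{-p\,q_i(l,\nu)-(1+\alpha)(l+1)(i-p)}$, and there are $\binom{i}{p}$ such words.

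It then remains to collect exponents, which rests on two arithmetic facts. First, $i\leq m(l,\nu)-1=\lceil \nu/((1+\alpha)l+\alpha)\rceil-1$ forces $i\,((1+\alpha)l+\alpha)<\nu$. Second, by~\eqref{n1} and $\lceil x\rceil\geq x$, $q_i(l,\nu)\geq \nu+i-(1+\alpha)(l+1)(i-1)$. Multiplying a $p$-$E_i$ contribution by the prefactor $\binom{n}{i}\binom{i}{p}\leq 2^{i}n^{i}/i!$ produces the power $n^{\,i-p\,q_i(l,\nu)-(1+\alpha)(l+1)(i-p)}$, and a short computation from the two inequalities shows that this exponent is $\leq-\nu$ for every $p\geq1$ and every $1\leq i\leq m(l,\nu)-1$; the same computation gives $q_i(l,\nu)>(1+\alpha)(l+1)\geq1$, so that $\hat P_{h_{l+1}}^{q_i(l,\nu)}$ is indeed among the operators provided by the hypothesis, $1\leq q_i(l,\nu)\leq q_1(l,\nu)=\nu+1\leq\nu_0+1$. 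For the seminorm index, since $k(l+1,\cdot)\geq\beta$ (which follows by reading~\eqref{n2} downward from the base value $\beta$), a $p$-$E_i$ word costs $p\,k(l+1,q_i(l,\nu))+(i-p)\beta\leq i\,k(l+1,q_i(l,\nu))$ derivatives; taking the maximum over $p$, over $1\leq i\leq m(l,\nu)-1$, and over the remainder term (which costs $\beta m(l,\nu)$) gives exactly $k(l,\nu)$ as in~\eqref{n2}. Summing the finitely many contributions and absorbing all $T$-powers and binomial constants into $C_{l,\nu}$ yields~\eqref{approx_ordre_nu_lem}.

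I expect the main obstacle to be the interplay between the noncommutative expansion and the need to carry every estimate in \emph{all} the seminorms $\|\cdot\|_{m,\infty}$: since $E_i$ and $\Delta_{h_{l+1}}$ genuinely lose derivatives and have no bounded norm on a single $\|\cdot\|_{m,\infty}$, a word containing two or more copies of $E_i$ can only be estimated by chaining the hypothesis through increasing seminorm indices --- this is precisely the mechanism behind the multiplicative term $i\,k(l+1,q_i(l,\nu))$ in~\eqref{n2}, and the reason the induction cannot be run with $\|\cdot\|_{\infty}$ alone. Beyond that, the only non-routine points are the two arithmetic inequalities above and the verification that the auxiliary indices $q_i(l,\nu)$ remain in $\{1,\dots,\nu_0+1\}$.
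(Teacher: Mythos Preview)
Your proposal is correct and follows essentially the same route as the paper: the identical decomposition $\hat P_{h_l}^{\nu}-P_{h_l}=\sum_i(I_i^{\nu,h_{l+1}}-I_i^{h_{l+1}})-R_{m(l,\nu)}^{h_{l+1}}$, the same binomial expansion of the product after writing $\hat P_{h_{l+1}}^{q_i}-Q_{l+1}=\Delta_{h_{l+1}}+E_i$, and the same arithmetic checks (the paper verifies $jq_i+(1+\alpha)(l+1)(i-j)-i\ge\nu$ by first reducing to $j=1$ via $q_i\ge(1+\alpha)(l+1)$, which is exactly your ``short computation'').

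Your observation in the last paragraph is a genuine refinement. The paper's displayed bound $\|I_i^{\nu,h_{l+1}}f-I_i^{h_{l+1}}f\|_\infty\le C\binom{n}{i}\sum_j\binom{i}{j}\|f\|_{jk(l+1,q_i)+(i-j)\beta,\infty}\,n^{-jq_i}h_{l+1}^{(1+\alpha)(i-j)}$ implicitly chains $j\ge 2$ copies of $E_i$ through higher seminorms, which is not licensed by \eqref{hyp_rec_err} alone since that hypothesis is stated only in $\|\cdot\|_\infty$. Your fix---carrying the hypothesis (and conclusion) in every $\|\cdot\|_{m,\infty}$, at the cost of $\|f\|_{m+k,\infty}$ on the right---is exactly what is needed, and costs nothing because \eqref{b00} and \eqref{b000} already hold in all seminorms, so the recursive construction does too. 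The paper uses this tacitly; you make it explicit.
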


\bigskip

\begin{remark}\begin{enumerate}
    \item Compare the definition of $I_{i}^{\nu ,h_{l+1}}(n)$ with
the one of $I_{i}^{h_{l+1}}(n)$ in~(\ref{b3'}): one just replaces $\Delta_{h_{l+1}}=(P_{h_{l +1}}-P_{h_{l +1}}^{h_{l +1}})$ by $(\hat{P}_{h_{l +1}}^{q_{i}(l ,\nu )}-P_{h_{l +1}}^{h_{l +1}})$. So $P_{h_{l+1}}$ is replaced by $\hat{P}_{h_{l +1}}^{q_{i}(l ,\nu )}$,
which is supposed to be "computable".

\item Recall that for $l=0$, we have $h_{0}=\frac{T}{n^{0}}=T$. Thus so $\hat{P}_{h_{0}}^{\nu }=\hat{P}_{T}^{\nu }$ is an approximation of order $n^{-\nu }$ of $P_{h_{0}}=P_{T}$. This is what we want to obtain.

\item The inductive construction suggested by Lemma~\ref{lem_rec_approx} to get a $\nu$-th order scheme for $P_{h_l}$ is finite. See the construction of the tree $\mathcal{T}_{l}^{\nu }$ in (\ref{Tree}).
  \end{enumerate}
\end{remark}
\bigskip

\begin{proof}[Proof of Lemma~\ref{lem_rec_approx}] For $\nu \leq \alpha +(1+\alpha )l$, we have $m(l,\nu )=1$ so
that $\hat{P}_{h_{l }}^{\nu }=P_{h_{l }}^{h_{l +1}}.$ Using~%
\eqref{b2_2} with $m=1$, we obtain (\ref{approx_ordre_nu_lem}).

Suppose now that $\nu >\alpha +(1+\alpha )l$ so that we have $m(l,\nu )-1>0.$
We write%
\begin{eqnarray*}
\hat{P}_{h_{l }}^{\nu } &=&P_{h_{l }}^{h_{l
+1}}+\sum_{i=1}^{m(l,\nu )-1}I_{i}^{h_{l +1}}(n)+\sum_{i=1}^{m(l,\nu
)-1}(I_{i}^{\nu ,h_{l +1}}(n)-I_{i}^{h_{l +1}}(n)) \\
&=&P_{h_{l }}-R_{m(l,\nu )}^{h_{l +1}}(n)+\sum_{i=1}^{m(l,\nu
)-1}(I_{i}^{\nu ,h_{l +1}}(n)-I_{i}^{h_{l +1}}(n)).
\end{eqnarray*}%
First we compare $I_{i}^{h_{l +1}}(n)$ and $I_{i}^{\nu ,h_{l +1}}(n)$.
From (\ref{int}) we have 
\begin{equation*}
I_{i}^{\nu ,h_{l +1}}(n)=\binom{n}{i}\times \E_{\mu _{i}}\left(
P_{(n-\kappa _{i}^{\prime })h_{l +1}}^{h_{l +1}}\prod_{j=0}^{i-1}(\hat{%
P}_{h_{l +1}}^{q_{i}(l ,\nu )}-P_{h_{l +1}}+\Delta _{h_{l
+1}})P_{(\kappa _{j+1}-\kappa _{j}^{\prime })h_{l +1}}^{h_{l
+1}}\right) .
\end{equation*}%
We get by expanding (choose $j$ times $\hat{P}_{h_{l +1}}^{q_{i}(l
,\nu )}-P_{h_{l +1}}$ and $i-j$ times $\Delta _{h_{l +1}}$) and using (%
\ref{b00}) and~\eqref{hyp_rec_err} 
\begin{align*}
\Vert I_{i}^{\nu ,h_{l +1}}(n)f-I_{i}^{h_{l +1}}(n)f\Vert _{\infty }&
\leq C \binom{n}{i}\sum_{j=1}^{i}\binom{i}{j}\Vert f\Vert _{j\times
k(l +1,q_{i}(l ,\nu ))+\beta (i-j),\infty }n^{-jq_{i}(l ,\nu
)}h_{l +1}^{(1+\alpha )(i-j)} \\
& \leq C \frac{n^{i}}{i!}\Vert f\Vert _{k(l ,\nu ),\infty }\sum_{j=1}^{i}%
\binom{i}{j} \frac{T^{(1+\alpha )(i-j)}}{n^{jq_{i}(l ,\nu
)+(1+\alpha )(l +1)(i-j)}}
\end{align*}%
with $C$ depending on the constants $C_{l+1,q_{i}(l,\nu )}.$ We have to check that,
for every $j=1,...,i$%
\begin{equation*}
jq_{i}(l ,\nu )+(1+\alpha )(l +1)(i-j)-i\geq q_{i}(l ,\nu
)+(1+\alpha )(l +1)(i-1)-i \ge \nu .
\end{equation*}%
The first inequality is true if $q_{i}(l ,\nu )\geq (1+\alpha )(l+1)$ and thus if
$\nu+i-(1+\alpha)(l+1)(i-1)\ge (1+\alpha )(l+1)$ by using~\eqref{n1}. The last inequality is equivalent to
\begin{equation*}
i\leq \frac{\nu }{(1+\alpha )(l+1)-1},
\end{equation*}%
which holds since
\begin{equation*}
i\leq m(l,\nu )-1\leq \frac{\nu }{\alpha +(1+\alpha )l}=\frac{\nu }{%
(1+\alpha )(l+1)-1}.
\end{equation*}%
We obtain%
\begin{equation*}
\Vert I_{i}^{\nu ,h_{l +1}}(n)f-I_{i}^{h_{l +1}}(n)f\Vert _{\infty
}\leq C_{l,\nu }\left\Vert f\right\Vert _{k(l,\nu ),\infty }n^{-\nu }.
\end{equation*}%
We deal now with the remainder. Using (\ref{b2_2}) with $m=m(l,\nu )$ we
obtain 
\begin{equation*}
\left\Vert R_{m}^{h_{l +1}}(n)f\right\Vert _{\infty }\leq C\left\Vert
f\right\Vert _{\beta m,\infty }\frac{1}{n^{((1+\alpha )l +\alpha )m}}.
\end{equation*}%
Since $((1+\alpha )l +\alpha )m(l,\nu )\geq \nu $ the proof is completed. 
\end{proof}

\begin{remark}
Lemma~\ref{lem_rec_approx} gives a recursive way to construct approximation of order $\nu \in \N$. When $\alpha$ is not an integer, another natural choice may be to consider approximations of order $\alpha \nu$, with $\nu \in \N$. Of course, it is possible then to get an analogous recursive construction. 
\end{remark}

\section{High order approximations of semigroups}\label{Sec_TO}

Lemma~\ref{lem_rec_approx} gives the recipe to construct high order approximations of semigroups by induction: from high order approximations on a time step $h_{l+1}$, we produce  high order approximations on a time step $h_{l}$, we go on this construction to get  high order approximations for $T=h_{0}$. To describe precisely this construction, we need to introduce basic mathematical objects. In Subsection~\ref{subsec_trees}, we introduce "trees", "random trees"
and "random grids" that will be used to define suitably our approximation schemes. Then, Subsection~\ref{subsec_oper} presents a sequence of
abstract operators and the composition operations associated to some given
random tree. All these definitions are motivated by the approximation  schemes that we
describe in Subsection~\ref{Sec_tree_approx}, but for the moment we keep an
abstract framework because this allows a precise and simple presentation.

\subsection{Trees, random trees and random grids}\label{subsec_trees}

The approximations that we construct in this paper involve a quite intricate combinatorics. This can be understood from Lemma~\ref{lem_rec_approx}: an approximation of order $\nu$ at a level~$l$ is constructed from approximations of different orders at level~$l+1$. To describe this recursion, we will use trees, see~\eqref{D4} and~\eqref{approx_tree} thereafter. Then, to make this approximation more explicit and non-recursive, we will then use what we call random trees, i.e. trees labeled with particular random variables. In the case of SDE and the Euler scheme approximations, these random trees can be seen as a way to represent the random grid on which the Euler scheme is constructed. In this paper, we will use as much as possible the letter $\cT$ for trees and $\cA$ for random trees.

\subsubsection{Trees} We will use the Neveu notation~\cite{Neveu}. Let $\mathcal{U}=\cup_{n\geq 0}(\mathbb{N}^{\ast })^{n}$ be the set of finite
sequences of non-negative integers. For $u=(u_{1},...,u_{m})\in \mathcal{U}$
and $i\in \N$ we denote $iu=(i,u_{1},...,u_{m})$ and $ui=(u_{1},...,u_{m},i).$
We also denote $\left\vert u\right\vert =m$ the length of $u.$

\begin{definition}\label{def_tree}
\textbf{(Trees)} A tree is a subset $\mathcal{T}\subset \mathcal{U}$ such
that:
\begin{enumerate}
\item $\emptyset \in \mathcal{T}$,

\item $uj\in \mathcal{T\quad \Rightarrow \quad }u\in \mathcal{T}$,

\item $uj\in \mathcal{T\quad \Rightarrow \quad }ui\in \mathcal{T}$ for every 
$i<j.$
\end{enumerate}
\end{definition}

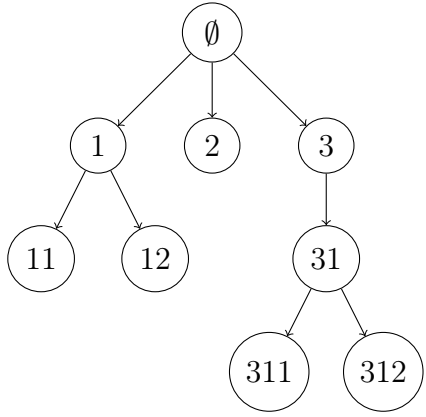
\begin{figure}[h]
  \centering
\begin{tikzpicture}[nodes={draw, circle}, ->] 
\node {$\emptyset$}
child {
  node {1}
  child {
    node {11}
  }
  child {
    node {12}
  }
}
child {
  node {2}
}
child {
  node {3}
   child {
    node {31}   
   child {
    node {311}
   } child {
    node {312}
   }
   }
};
\end{tikzpicture}
\caption{Example illustrating the Neveu notation for the tree $\{\emptyset ,1,2,3, 11,12,31,311,312\}$ (we note $312$ instead of $(3,1,2)$ when no confusion is possible).}
\end{figure}
\noindent {\bf Convention:} Throughout the paper, we use a different symbol for the ancestor (root) of a tree and for the void set: 
\begin{equation}\label{convention}
\emptyset =\text{Ancestor,\quad }\varnothing =\text{ void set.}
\end{equation}

We think to $\mathcal{T}$ as a genealogical tree: each $%
u=(u_{1},...,u_{m})\in \mathcal{T}$ represents an individual (we call it
also node or vertex) which is characterized by his genealogy: $u$ is the $
u_{m}$-th son of $(u_{1},...,u_{m-1}).$ So the first property
imposes that the root $\emptyset $ belong to the tree (it is the universal
ancestor), the second property says that any node of the tree (except the
root) has a father, and the third property imposes to number the sons of a
node increasingly, without jumping any number: put it otherwise, if a third
son exists, then a second one has to exist also. Last, let us mention that a tree $\cT$ can be infinite: throughout the paper, we will only consider finite trees. 

We introduce some more notation related to $\mathcal{T}$. For $u\in \cT$, we denote by $j_{u}(\mathcal{T})$ the number of sons of $u$ that is%
\begin{equation*}
j_{u}(\mathcal{T})=\max \{i \in \N^*:ui\in \mathcal{T}\} \text{ with  } \max\varnothing=0.
\end{equation*}%
We denote by $\mathcal{T}_{i}^{\prime }$ the sub tree of $\mathcal{T}$
rooted in $i$ that is $\mathcal{T}_{i}^{\prime }=\{u\in \cU :iu\in \mathcal{T}\}.$
We also denote $i\mathcal{T}=\{iu:u\in \mathcal{T}\}$.
This means that we root $\mathcal{T}$ at the point $i\in \N^*$. Notice that this is
not a tree because it does not contain $j\mathcal{T}$, for $j<i$, nor the ancestor. We note $
\left\vert \mathcal{T}\right\vert =\max \{\left\vert u\right\vert :u\in 
\mathcal{T}\}$, the depth of the tree $\mathcal{T}$. Finally we define the
extreme points (leaves) of $\mathcal{T}:$%
\begin{equation*}
\mathcal{E(T)}=\{u\in \mathcal{T}:j_{u}(\mathcal{T)}=0\}.
\end{equation*}

\subsubsection{Random trees}

Let $\cA$ be a finite tree and $n\in \N^*$ such that $n\ge \max_{u\in \cA}j_u(A)$. To every
vertex $u\in \mathcal{A} \setminus \mathcal{E}(\mathcal{A})$ (i.e. such  that $j_{u}(\mathcal{A})>0$) we associate a random variable 
\begin{equation*}
\kappa (u)=\{(\kappa _{1}(u),\dots,\kappa _{j_{u}(\mathcal{A})}(u)): 0\leq \kappa _{1}(u)<...<\kappa _{j_{u}(\mathcal{A})}(u)\leq
n-1\}\subset \{0,1,...,n-1\}^{j_{u}(\mathcal{A})}
\end{equation*}%
which we may considered as the (random) birthdays of the sons of $u$. We
denote%
\begin{equation*}
\kappa (\mathcal{A})=\{\kappa (u):u\in \mathcal{A}-\mathcal{E}(\mathcal{A}%
)\}.
\end{equation*}%
We assume :
\begin{itemize}
  \item The random variables $\kappa (u),u\in \mathcal{A}-\mathcal{E}(%
\mathcal{A})$ are independent each other.

\item $0\leq \kappa _{1}(u)<...<\kappa _{j_{u}(\mathcal{A})}(u)\leq n-1$
is an order statistics, i.e. they are uniformly distributed on $\{(k_1,\dots,k_{j_{u}(\mathcal{A})})\in\{0,1,...,n-1\}^{j_{u}(\mathcal{A})} : 0\le k_1<\dots<k_{j_{u}(\mathcal{A})} \}.$
\end{itemize}

\begin{definition}\label{def_random_tree}
\textbf{(Random trees)} If the above hypothesis are verified, we call $(\mathcal{A},\kappa (\mathcal{A}))$ a random tree.
\end{definition}

\begin{remark}\label{random_tree_heritage}
Let us precise the relation between the random trees $(\mathcal{A},\kappa (\mathcal{A}))$ and its subtrees $(\mathcal{A}_{j}^{\prime },\kappa^j (\mathcal{A}_{j}^{\prime }))$, $j=1,...,i$ with $i=j_\emptyset(\cA)$. For $u\in \mathcal{A}$ with $u=jv$
we will assume $\kappa (u)=\kappa^j (v)$ where $\kappa^j(v)$ is the random
variable associated to $v\in \mathcal{A}_{j}^{\prime }$. So, $\kappa (\mathcal{A})$ is the family of uniform random variables obtained from $\kappa^j(\mathcal{A}_{j}^{\prime }),j=1,...,i$
to which is added one independent random variable $\kappa(\emptyset)$ uniformly distributed on $\{(k_1,\dots,k_{j_{\emptyset}(\mathcal{A})})\in\{0,1,...,n-1\}^{j_{\emptyset}(\mathcal{A})} : 0\le k_1<\dots<k_{j_{\emptyset}(\mathcal{A})} \}$.
\end{remark}

\subsubsection{Random grids}

We associate to a random tree $(\mathcal{A},\kappa (\mathcal{A}))$ a random grid. We fix $l\in\N, n\in \N$ and $T\geq 0$ and we recall 
\begin{equation*}
h_{l}=\frac{T}{n^{l}},\quad h=h_{1}=\frac{T}{n},\quad h_{0}=T,
\end{equation*}%
and we use here (and in the sequel) the convention 
\begin{equation}
\kappa ^{\prime }=\kappa +1.  \label{conv}
\end{equation}%
Then, we construct by recurrence the random grid $G_{l}(\mathcal{A})$ on $[0,h_{l}]$ in the following way. For convenience, we drop in the notation  $G_{l}(\mathcal{A})$ the dependence in $\kappa(\cA)$ even if this grid is constructed by using $\kappa(\cA)$. 
If $\mathcal{A=\{\emptyset \}}$ (contains just the ancestor) then $j_{\emptyset }(\mathcal{A)}=0$ and we define
\begin{equation}
G_{l}(\mathcal{A})=G_{l}(\{ \emptyset \})=\{qh_{l+1},q=0,...,n\}.
\label{G1}
\end{equation}%
This is the usual uniform grid of step $h_{l+1}$ on $[0,h_{l}]$. Otherwise, we have
 $j_{\emptyset }(\mathcal{A})>0$ and we define by recurrence%
\begin{equation}
G_{l}(\mathcal{A})=\{qh_{l+1},q=0,...,n\}\cup \left( \cup
_{i=1}^{j_{\emptyset }(\mathcal{A)}}\{\kappa _{i}(\emptyset) h_{l+1}+G_{l+1}(\mathcal{A}_{i}^{\prime }) \}\right) ,  \label{G2}
\end{equation}%
where $t+G:=\{t+s, s\in G\}$. This means that we consider the uniform grid of step $h_{l+1}$ on $[0,h_{l}]$ and moreover we refine the intervals $[\kappa _{i}h_{l+1},\kappa_{i}^{\prime }h_{l+1}]$ according to the random grid $G_{l+1}(\mathcal{A}_{i}^{\prime })$ (see Remark~\ref{random_tree_heritage} to see how the random trees $\cA$ and $\cA'_i$ are related). Notice that, if $\left\vert \mathcal{A}\right\vert =r,$ then $G_{l}(\mathcal{A})\subset \{qh_{l+r+1},q=0,...,n^{r+1}\}.$ We denote $m=\card (G_{l}(\mathcal{A}))-1$ and we define
\begin{equation}
\Pi _{l}(\mathcal{A})=\{0=s_{0}<s_{1}<...<s_{m}=h_{l}\}\subset
\{qh_{l+r+1},q=0,...,n^{r+1}\}  \label{G3}
\end{equation}%
the reordering of $G_{l}(\mathcal{A})$. We notice that for every $k=1,....,m$
one has $s_{k}-s_{k-1}=h_{l+p_{k}}$ for some $p_{k}\in \{1,2,...,r\}.$
We finally give an alternative representation
of the random grid $G_{l}(\mathcal{A})$.

\begin{lemma}\label{lem_grids}
Let $(\cA,\kappa(\cA))$ be a random tree. Let us define $t_{l}(\emptyset )=0$ and for $u=(u_{1},...,u_{m})\in \cA$, we
define 
\begin{eqnarray*}
t_{l}(u) &=&\kappa _{u_{1}}(\emptyset )h_{l+1}+\kappa
_{u_{2}}(u_{1})h_{l+2}+...+\kappa _{u_{m}}(u_{1},...,u_{m-1})h_{l+m} \\
&=&t_{l}(u_{1},...,u_{m-1})+\kappa _{u_{m}}(u_{1},...,u_{m-1})h_{l+m}.
\end{eqnarray*}%
Then, we have 
\begin{equation*}
G_{l}(\mathcal{A})=\cup _{u\in \mathcal{A}}\{t_{l}(u)+kh_{l+\left\vert
u\right\vert +1},k=0,...,n\}.
\end{equation*}%
\end{lemma}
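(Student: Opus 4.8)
The plan is to prove the identity by induction on the depth $r=|\cA|$ of the random tree, matching the recursive definition \eqref{G1}--\eqref{G2} of $G_l(\cA)$ against the explicit union over vertices on the right-hand side. First I would record the two book-keeping facts that drive the induction: (i) the vertices of $\cA$ split as $\{\emptyset\}$ together with the disjoint union, over $i=1,\dots,j_\emptyset(\cA)$, of the vertices of the form $iv$ with $v\in \cA'_i$; and (ii) by the very definition of $t_l$ in the statement, if $u=iv$ then $t_l(u)=\kappa_i(\emptyset)h_{l+1}+t_{l+1}(v)$, where $t_{l+1}$ is the analogous functional built from the shifted random tree $(\cA'_i,\kappa^i(\cA'_i))$ (here one uses Remark \ref{random_tree_heritage}, which says precisely that $\kappa(iv)=\kappa^i(v)$, so the labels of $\cA'_i$ are inherited consistently). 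Also $|iv|=|v|+1$, so $h_{l+|u|+1}=h_{(l+1)+|v|+1}$.

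Next comes the base case $\cA=\{\emptyset\}$: then the right-hand side is just $\{t_l(\emptyset)+kh_{l+1}:k=0,\dots,n\}=\{kh_{l+1}:k=0,\dots,n\}$, which is exactly $G_l(\{\emptyset\})$ by \eqref{G1}. For the inductive step, assume the identity holds for all random trees of depth at most $r-1$ (in particular for each $\cA'_i$, which has depth $\le r-1$, at any level), and let $\cA$ have depth $r$ with $j_\emptyset(\cA)=i_0>0$. I would split the union $\cup_{u\in\cA}\{t_l(u)+kh_{l+|u|+1}\}$ using (i): the $u=\emptyset$ term contributes $\{kh_{l+1}:k=0,\dots,n\}$, and for each fixed $i\in\{1,\dots,i_0\}$ the terms with $u=iv$, $v\in\cA'_i$, contribute
\[
\cup_{v\in\cA'_i}\{\kappa_i(\emptyset)h_{l+1}+t_{l+1}(v)+kh_{(l+1)+|v|+1}:k=0,\dots,n\}
=\kappa_i(\emptyset)h_{l+1}+\cup_{v\in\cA'_i}\{t_{l+1}(v)+kh_{(l+1)+|v|+1}:k=0,\dots,n\},
\]
using (ii). By the induction hypothesis applied at level $l+1$ to the random tree $(\cA'_i,\kappa^i(\cA'_i))$, the inner union equals $G_{l+1}(\cA'_i)$. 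Hence the total is $\{kh_{l+1}:k=0,\dots,n\}\cup\big(\cup_{i=1}^{i_0}\{\kappa_i(\emptyset)h_{l+1}+G_{l+1}(\cA'_i)\}\big)$, which is exactly the right-hand side of \eqref{G2}. This closes the induction.

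I do not expect a serious obstacle here; the only points requiring care are purely notational: keeping the level index $l$ shifting correctly to $l+1$ inside the subtrees (so that $h_{l+|u|+1}$ for $u=iv$ becomes $h_{(l+1)+|v|+1}$ for the functional $t_{l+1}$ on $\cA'_i$), and invoking Remark \ref{random_tree_heritage} to justify that $t_{l+1}$ on $\cA'_i$ is built from the inherited labels $\kappa^i(\cA'_i)$ rather than from $\kappa(\cA)$ directly. One should also note that the statement is an identity of sets (not multisets), so no disjointness of the unions is needed — overlaps, e.g. the endpoints $0$ and $h_l$ appearing in several pieces, are harmless.
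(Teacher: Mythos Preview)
Your proposal is correct and follows essentially the same approach as the paper's own proof: induction on the depth $|\cA|$, the same base case via \eqref{G1}, the same decomposition $\cA=\{\emptyset\}\cup\big(\cup_i i\cA'_i\big)$, the same identity $t_l(iv)=\kappa_i(\emptyset)h_{l+1}+t_{l+1}(v)$ justified through Remark~\ref{random_tree_heritage}, and the conclusion via the recursive formula \eqref{G2}. The additional remarks you make about the level shift $h_{l+|u|+1}=h_{(l+1)+|v|+1}$ and about overlaps being harmless are helpful clarifications but do not depart from the paper's argument.
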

\begin{proof}
  We prove this result by recurrence on the depth~$|\cA|$. The result is clear for $\cA=\{\emptyset\}$. Suppose the result true for any random tree~$|\cA'|\le r-1$ and assume $|\cA|=r$. Let $i\in\{1,\dots,j_{\emptyset}(\cA)\}$ and $\kappa^i$ the random variables associated to $\cA'_i$ (see Remark~\ref{random_tree_heritage}). For $u'\in \cA'_i$, we have
  $$t_{l+1}(u')=\kappa^i_{u'_{1}}(\emptyset )h_{l+2}+...+\kappa^i_{u'_{m}}(u'_{1},...,u'_{m-1})h_{l+1+m}.$$
  We set $u=(i,u')$. Since $\kappa((i,u))=\kappa^i(u)$ for any $u\in \cA'_i$, we get
  $$t_l(u)=\kappa _{i}(\emptyset )h_{l+1}+t_{l+1}(u').$$
 From $\cA=\{\emptyset\}\cup\left(\cup_{i=1}^{j_\emptyset(\cA)} i\cA'_i \right)$, we deduce that
  \begin{align*}
&\cup _{u\in \mathcal{A}}\{t_{l}(u)+kh_{l+\left\vert
  u\right\vert +1},k=0,...,n\}\\
    &= \{qh_{l+1},q=0,...,n\} \cup\left(\cup_{i=1}^{j_\emptyset(\cA)} \cup _{u'\in \cA'_i} \{\kappa _{i}(\emptyset )h_{l+1}+t_{l+1}(u')+kh_{l+\left\vert u'\right\vert +2},k=0,...,n\}\right). 
  \end{align*}
  By using the recurrence hypothesis and~\eqref{G2}, this set is equal to $G_{l}(\mathcal{A})$. 
\end{proof}

\subsection{Operators}\label{subsec_oper}

We consider again  a sequence of operators $Q_{l}:C^\infty(\R^d) \rightarrow C^\infty(\R^d)$, $l\in \N$ (or more generally  $Q_{l}:F\rightarrow F$ where $F$ is an abstract vector space). For $k\in \N$ we denote 
\begin{equation*}
Q_{l}^{[k]}=Q_{l}\dots Q_{l}\quad k\text{ times.}
\end{equation*}
Given a random tree $(\mathcal{A},\kappa (\mathcal{A}))$ we construct by
recurrence $Q_{l}^{\mathcal{A}}$ in the following way (again we drop the dependence on $\kappa(\cA)$ in the notation). If $\mathcal{A}=\{\emptyset \}$ we define 
\begin{equation}
Q_{l}^{\mathcal{A}}=Q_{l}^{\{\emptyset \}}=Q_{l+1}^{[n]}.  \label{Op2}
\end{equation}%
Suppose now that $j_{\emptyset }(\mathcal{A})=i\geq 1$ and let $\{\kappa _{1}<...<\kappa _{i}\}=\kappa _{\emptyset }(\mathcal{A})$. We
define by recurrence%
\begin{equation}
Q_{l}^{\mathcal{A}}=Q_{l+1}^{[n-\kappa _{i}^{\prime
}]}\prod_{j=1}^{i}(Q_{l+1}^{\mathcal{A}_{j}^{\prime }}Q_{l+1}^{[\kappa
_{j}-\kappa _{j-1}^{\prime }]}),  \label{Op1}
\end{equation}
see Remark~\ref{random_tree_heritage} for the dependence between the random trees. Finally, we define $\Gamma _{l}^{\mathcal{A}}$ in the following way. If $%
\mathcal{A=}\{\emptyset \}$ we define 
\begin{equation}
\Gamma _{l}^{\mathcal{A}}=\Gamma _{l}^{\{\emptyset \}}=Q_{l+1}^{[n]}-Q_{l}
\label{Op4}
\end{equation}%
and if ${j}_{\emptyset }(\mathcal{A})=i\geq 1,$ we define by
recurrence%
\begin{equation}
\Gamma _{l}^{\mathcal{A}}=Q_{l+1}^{[n-\kappa _{i}^{\prime
}]}\prod_{j=1}^{i}(\Gamma _{l+1}^{\mathcal{A}_{j}^{\prime }}Q_{l+1}^{[\kappa
_{j}-\kappa _{j-1}^{\prime }]}).  \label{Op5}
\end{equation}%
Notice that the recurrence formula~(\ref{Op5}) is the same as (\ref{Op1}),
but the initial condition (\ref{Op4}) is different from (\ref{Op2}).

We consider now a deterministic tree $\cT$ (in contrast with $\cA$ which is a random tree) and define by recurrence $\Delta_{l}(\cT)$. If $\cT=\{\emptyset\}$, then 
\begin{equation}
\Delta _{l}(\cT)=\Delta _{l}(\emptyset )=Q_{l+1}^{[n]}-Q_{l}
\label{Op7}
\end{equation}%
and if ${j}_{\emptyset }(\cT)>0,$ 
\begin{equation}
\Delta _{l}(\cT)=\Delta _{l}(\{\emptyset \})+\sum_{i=1}^{{j}
_{\emptyset }(\cT)}\binom{n}{i}\times \E_{\mu _{i}}\left(
Q_{l+1}^{[n-\kappa _{i}^{\prime }]}\prod_{j=1}^{i}\Delta _{l+1}(\cT
_{i}^{\prime })Q_{l+1}^{[\kappa _{j}-\kappa _{j-1}^{\prime }]}\right)
\label{Op8}
\end{equation}%
where $\mu_{i}$ is the uniform law of the order statistics $0\leq \kappa
_{1}<...<\kappa _{i}\leq n-1.$

Our aim now is to give an explicit computational formula for $\Delta _{l}(\mathcal{T}).$ In order to do it, we need to introduce one more notation
concerning families of trees (forests). Let $\mathbb{F}=\{\cT_{j},j\in J_{\mathbb{F}}\}$, where $J_{\mathbb{F}}$ is a family of indices and $\cT_{j}$ is a tree for all $j\in J_{\mathbb{F}}$. Given $m\in \N^*$, we construct
\begin{equation}
\mathbb{F}^{\otimes m}=\{\cT(j_{1},\dots,j_{m}),j_{i}\in J_{\mathbb{F}},i=1,\dots,m\}  \label{Op9}
\end{equation}%
with 
\begin{equation}
\cT(j_{1},\dots,j_{m})=\{\emptyset \}\cup 1\cT_{j_{1}}\cup \dots \cup m\cT_{j_{m}}.  \label{Op10}
\end{equation}%
So, the tree $\mathcal{T}(j_{1},\dots,j_{m})$ is obtained by rooting to the ancestor the tree $\mathcal{T}_{j_{k}}$ at the node $k$, for each $k=1,...,m$. We note $\mathcal{A}(j_{1},\dots,j_{m})$ the random tree obtained by labeling the nodes of  $\mathcal{T}(j_{1},\dots,j_{m})$  with random variables, according to Definition~\ref{def_random_tree}. 

Using this notation, we are able to construct the family of trees associated
to a tree $\mathcal{T}$ by recurrence, in the following way. If $\mathcal{T}=\{\emptyset \}$ (this means the tree is composed just by the ancestor)
then we define $\mathbf{F(}\mathcal{T})=\mathbf{F}(\{\emptyset \})=\{\emptyset \}$ - so the finite family associated to the tree $\{\emptyset \}$ has just one element which is the tree  $\{ \emptyset \}$. Suppose now that $j_{\emptyset}(\mathcal{T)}\geq 1$. Then, we define%
\begin{equation}
\mathbf{F(}\mathcal{T})=\{\emptyset \}\cup \left( \cup _{i=1}^{j_{\emptyset }(%
\mathcal{T})}\mathbf{F}^{\otimes i}(\mathcal{T}_{i}^{\prime }) \right),
\label{Op11}
\end{equation}
where $\mathbf{F}^{\otimes i}(\mathcal{T}_{i}^{\prime })$ is the shorthand notation for $(\mathbf{F}(\mathcal{T}_{i}^{\prime }))^{\otimes i}$.
We are now able to give the first result from this section.
\begin{proposition}
\label{delta} Let $\mathcal{T}$ be a tree and let $\mathbf{F}(\mathcal{T})$
be the family of trees associated to $\mathcal{T}$ in~(\ref{Op11}). We label each tree $\cA\in \mathbf{F}(\mathcal{T})$ with random variables, so that $\cA$ is a random tree in the sense of Definition~\ref{def_random_tree}. Then,
with $\Gamma_{l}^{\mathcal{A}}$ defined in (\ref{Op4}) \ and (\ref{Op5}), we have
\begin{equation}
\Delta _{l}(\mathcal{T})=\Delta _{l}(\{\emptyset \})+\sum_{i=1}^{j%
_{\emptyset }(\mathcal{T})}\sum_{\mathcal{A}\in \mathbf{F}^{\otimes i}(%
\mathcal{T}_{i}^{\prime })}c(\mathcal{A})\E(\Gamma _{l}^{\mathcal{A}})=\sum_{%
\mathcal{A}\in \mathbf{F}(\mathcal{T})}c(\mathcal{A})\E(\Gamma _{l}^{\mathcal{%
A}})  \label{Op12}
\end{equation}%
with 
\begin{equation}
c(\mathcal{A})=\prod_{u\in \mathcal{A}}\binom{n}{j_{u}(\mathcal{A})}%
,\quad c(\emptyset )=1.  \label{Op13}
\end{equation}
\end{proposition}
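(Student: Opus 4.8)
\emph{Approach.} The identity~\eqref{Op12} is purely algebraic, so the plan is an induction on the depth $|\cT|$ of the tree; no seminorm estimate enters. The key observation is that the recursion~\eqref{Op7}--\eqref{Op8} defining $\Delta_l(\cT)$ and the recursion~\eqref{Op11} defining the forest $\mathbf{F}(\cT)$ have exactly the same shape: both peel off the root and, for each number of sons $i=1,\dots,j_\emptyset(\cT)$, branch into $i$ independent copies of the data attached to $\cT_i'$ at level $l+1$. The whole proof amounts to checking that, after applying the induction hypothesis, these two recursions match term by term; the only non-formal ingredient is the independence of the random labels of disjoint subtrees (Definition~\ref{def_random_tree} and Remark~\ref{random_tree_heritage}).

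\emph{Base case and first step.} For $\cT=\{\emptyset\}$ one has $\mathbf{F}(\{\emptyset\})=\{\{\emptyset\}\}$, $c(\{\emptyset\})=\binom n0=1$, and, by~\eqref{Op4} and~\eqref{Op7}, $\Gamma_l^{\{\emptyset\}}=Q_{l+1}^{[n]}-Q_l=\Delta_l(\{\emptyset\})$, so both sides of~\eqref{Op12} agree. For the inductive step, assume the statement for all trees of depth $<|\cT|$ and that $i_0:=j_\emptyset(\cT)\ge1$. In the $i$-th summand of~\eqref{Op8} ($1\le i\le i_0$), substitute $\Delta_{l+1}(\cT_i')=\sum_{\mathcal{B}\in\mathbf{F}(\cT_i')}c(\mathcal{B})\,\E(\Gamma_{l+1}^{\mathcal{B}})$ — legitimate since $|\cT_i'|<|\cT|$ — into each of the $i$ factors $\Delta_{l+1}(\cT_i')$, and expand the (non-commutative) $i$-fold product by multilinearity. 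The $i$-th summand then becomes
\[
\binom ni\sum_{(\mathcal{B}_1,\dots,\mathcal{B}_i)\in(\mathbf{F}(\cT_i'))^{\,i}}\Big(\prod_{j=1}^i c(\mathcal{B}_j)\Big)\,\E_{\mu_i}\Big(Q_{l+1}^{[n-\kappa_i']}\prod_{j=1}^i \E(\Gamma_{l+1}^{\mathcal{B}_j})\,Q_{l+1}^{[\kappa_j-\kappa_{j-1}']}\Big).
\]

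\emph{Identification.} Each tuple $(\mathcal{B}_1,\dots,\mathcal{B}_i)$ is the list of subtrees of a unique labelled tree $\cA=\{\emptyset\}\cup1\mathcal{B}_1\cup\dots\cup i\mathcal{B}_i\in\mathbf{F}^{\otimes i}(\cT_i')$ (cf.~\eqref{Op9}--\eqref{Op10}), which has $j_\emptyset(\cA)=i$, $\cA_j'=\mathcal{B}_j$, and whose labels are a fresh order-statistics variable $\kappa(\emptyset)\sim\mu_i$ at the root together with the labels of $\mathcal{B}_j$ on the $j$-th subtree (Remark~\ref{random_tree_heritage}). Two matchings remain. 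For the coefficient: from $c(\cA)=\prod_{u\in\cA}\binom n{j_u(\cA)}$ the root contributes $\binom ni$ and the other nodes, partitioned among $\mathcal{B}_1,\dots,\mathcal{B}_i$, contribute $\prod_{j=1}^i c(\mathcal{B}_j)$, so $c(\cA)=\binom ni\prod_{j=1}^i c(\mathcal{B}_j)$. For the operator: by~\eqref{Op5}, $\Gamma_l^{\cA}=Q_{l+1}^{[n-\kappa_i']}\prod_{j=1}^i\big(\Gamma_{l+1}^{\mathcal{B}_j}Q_{l+1}^{[\kappa_j-\kappa_{j-1}']}\big)$; conditioning on $\kappa(\emptyset)=(\kappa_1,\dots,\kappa_i)$ (which makes the $Q$-powers deterministic) and using that the labels of $\mathcal{B}_1,\dots,\mathcal{B}_i$ are mutually independent and independent of $\kappa(\emptyset)$, the expectation of this product of independent operators interspersed with deterministic ones factorizes, yielding $\E(\Gamma_l^{\cA})=\E_{\mu_i}\big(Q_{l+1}^{[n-\kappa_i']}\prod_{j=1}^i\E(\Gamma_{l+1}^{\mathcal{B}_j})Q_{l+1}^{[\kappa_j-\kappa_{j-1}']}\big)$. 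Hence the $i$-th summand equals $\sum_{\cA\in\mathbf{F}^{\otimes i}(\cT_i')}c(\cA)\E(\Gamma_l^{\cA})$; summing over $i$ and adding $\Delta_l(\{\emptyset\})=c(\{\emptyset\})\E(\Gamma_l^{\{\emptyset\}})$ gives the first equality of~\eqref{Op12}, and the second is precisely the definition~\eqref{Op11} of $\mathbf{F}(\cT)$ as the disjoint union of $\{\emptyset\}$ with the $\mathbf{F}^{\otimes i}(\cT_i')$.

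\emph{Main obstacle.} The only genuinely non-formal point is the last displayed identity: one must justify carefully, using the independence structure of Definition~\ref{def_random_tree}, that the expectation over the root's order statistics and the expectations over the subtree labels merge into a single expectation over $\kappa(\cA)$ — equivalently, that the expectation of a product of independent operator-valued factors (interspersed with factors that become deterministic once $\kappa(\emptyset)$ is fixed) equals the product of the individual expectations. Everything else is bookkeeping: matching the two recursions and the binomial coefficients, and checking that distinct tuples $(\mathcal{B}_1,\dots,\mathcal{B}_i)$ index distinct elements of $\mathbf{F}^{\otimes i}(\cT_i')$, so that no coefficient is lost when the sum is regrouped.
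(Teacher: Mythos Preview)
Your proof is correct and follows essentially the same approach as the paper's: induction on the depth $|\cT|$, substitution of the induction hypothesis into each factor of the product in~\eqref{Op8}, multilinear expansion over tuples $(\mathcal{B}_1,\dots,\mathcal{B}_i)\in\mathbf{F}(\cT_i')^i$, and identification of each tuple with the tree $\cA(j_1,\dots,j_i)\in\mathbf{F}^{\otimes i}(\cT_i')$ together with the factorization $c(\cA)=\binom ni\prod_j c(\mathcal{B}_j)$. Your discussion of the independence/conditioning step is a bit more explicit than the paper's, which simply asserts the key operator identity, but the argument is otherwise the same.
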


\begin{proof}
  Take first $\mathcal{T}=\{\emptyset \}$. By definition, we
have $\Delta _{l}(\{\emptyset \})=\Gamma _{l}^{\{\emptyset\}}=Q_{l+1}^{[n]}-Q_{l}.$ On the other hand we have $\mathbf{F}(\mathcal{T})=\mathbf{F}(\{\emptyset \}\mathcal{)}=\{\emptyset \}$ and $c(\emptyset
)\Gamma _{l}^{\{\emptyset \}}=\Gamma _{l}^{\{\emptyset \}}=\Delta
_{l}(\emptyset ).$ So the equality (\ref{Op12}) holds true.

Suppose now that (\ref{Op12}) holds if $\left\vert \mathcal{T}\right\vert
\leq q-1$ and let us prove it for $\left\vert \mathcal{T}\right\vert =q.$
Using the recurrence formula (\ref{Op8}) first and the recurrence
hypothesis then we get 
\begin{eqnarray*}
\Delta _{l}(\mathcal{T}) &=&\Delta _{l}(\{\emptyset \})+\sum_{i=1}^{{j%
}_{\emptyset }(\mathcal{T})}\binom{n}{i}\times \E_{\mu _{i}}\left(
Q_{l+1}^{[n-\kappa _{i}^{\prime }]}\prod_{j=1}^{i}\Delta _{l+1}(\mathcal{T}%
_{i}^{\prime })Q_{l+1}^{[\kappa _{j}-\kappa _{j-1}^{\prime }]}\right) \\
&=&\Delta _{l}(\{\emptyset \})+\sum_{i=1}^{{j}_{\emptyset }(\mathcal{T%
})}\binom{n}{i}\times \E_{\mu _{i}}\left( Q_{l+1}^{[n-\kappa _{i}^{\prime
}]}\prod_{j=1}^{i}(\sum_{\mathcal{A}_{j}\in \mathbf{F}(\mathcal{T}%
_{i}^{\prime })}c(\mathcal{A}_{j})\E(\Gamma _{l+1}^{\mathcal{A}%
j}))Q_{l+1}^{[\kappa _{j}-\kappa _{j-1}^{\prime }]}\right) .
\end{eqnarray*}%
Let $i\in \{1,..,{j}_{\emptyset }(\mathcal{T})\}.$ We have 
\begin{eqnarray}
&&\binom{n}{i}\times \E_{\mu _{i}}\left( Q_{l+1}^{[n-\kappa _{i}^{\prime
}]}\prod_{j=1}^{i}(\sum_{\mathcal{A}_{j}\in \mathbf{F}(\mathcal{T}%
_{i}^{\prime })}c(\mathcal{A}_{j})\E(\Gamma _{l+1}^{\mathcal{A}%
_{j}}))Q_{l+1}^{[\kappa _{j}-\kappa _{j-1}^{\prime }]}\right)  \label{Op15}
\\
&=&\sum_{\mathcal{A}_{j_{1}},...,\mathcal{A}_{j_{i}}\in \mathcal{T}%
_{i}^{\prime }}\binom{n}{i}\prod_{k=1}^{i}c(\mathcal{A}_{j_{k}})\times
\E_{\mu _{i}}\left(Q_{l+1}^{[n-\kappa _{i}^{\prime }]}\prod_{k=1}^{i}\E(\Gamma
_{l+1}^{\mathcal{A}_{j_{k}}})Q_{l+1}^{[\kappa _{k}-\kappa _{k-1}^{\prime }]}\right)
\notag
\end{eqnarray}%
We recall that $\mathcal{A}(j_{1},...,j_{i})$ is defined in (\ref{Op10}), and
by construction we have $(\mathcal{A}(j_{1},...,j_{i}))_{k}^{\prime }=\mathcal{A}_{j_{k}}.$ Then, we use the recurrence formula (\ref{Op5}) in the definition
of $\Gamma _{l}^{\mathcal{A}_{i}(j_{1},...,j_{i})}$ and we obtain%
\begin{equation*}
\E_{\mu _{i}}\left(Q_{l+1}^{[n-\kappa _{i}^{\prime }]}\prod_{k=1}^{i} \E(\Gamma
_{l+1}^{\mathcal{A}_{j_{k}}})Q_{l+1}^{[\kappa _{k}-\kappa _{k-1}^{\prime
}]} \right)=\E(\Gamma _{l}^{\mathcal{A}(j_{1},...,j_{i})}).
\end{equation*}%
Moreover, we have
\begin{equation*}
\binom{n}{i}\prod_{k=1}^{i}c(\mathcal{A}_{j_{k}})=c(\mathcal{A}%
(j_{1},...,j_{i})),
\end{equation*}%
so the term in (\ref{Op15}) is equal to 
\begin{equation*}
\sum_{\mathcal{A}_{j_{1}},...,\mathcal{A}_{j_{i}}\in \mathcal{T}_{i}^{\prime
}}c(\mathcal{A}(j_{1},...,j_{i}))\E(\Gamma _{l}^{\mathcal{A}%
(j_{1},...,j_{i})})=\sum_{\mathcal{A}\in \mathbf{F}^{\otimes i}(\mathcal{T}%
_{i}^{\prime })}c(\mathcal{A})\E(\Gamma _{l}^{\mathcal{A}}).
\end{equation*}%
We conclude that%
\begin{equation*}
\Delta _{l}(\mathcal{T})=\Delta _{l}(\{\emptyset \})+\sum_{i=1}^{{j}%
_{\emptyset }(\mathcal{T})}\sum_{\mathcal{A}\in \mathbf{F}^{\otimes i}(%
\mathcal{T}_{i}^{\prime })}c(\mathcal{A})\E(\Gamma _{l}^{\mathcal{A}})=\sum_{%
\mathcal{A}\in \mathbf{F}(\mathcal{T})}c(\mathcal{A})\E(\Gamma _{l}^{\mathcal{%
A}}). \qedhere
\end{equation*}
\end{proof}

Our aim now is to compute in an explicit way $\Gamma _{l}^{\mathcal{A}}.$
For a tree $\mathcal{A}$ and for a subset of leaves $\Lambda \subset \mathcal{E(A)}$,
we define $\mathcal{A}_{\Lambda }=\mathcal{A}\setminus\Lambda$:  we cut the
extreme nodes which belong to~$\Lambda$. Notice that $\mathcal{A}_{\Lambda
}$ is no more a tree: for example, if $\mathcal{A}=\{\emptyset ,1,2,3\}$ and 
$\Lambda =\{2\}$ then $\mathcal{A}_{\Lambda }=\{\emptyset ,1,3\}$ is not
a tree: the first and second axioms of Definition~\ref{def_tree} are satisfied, not the third. We also stress that $\mathcal{A}_{\Lambda }$ may be the void set in the case $\mathcal{A}=\{\emptyset \}$ and $\Lambda =\{\emptyset \}$. Thus, we look to $\mathcal{A}_{\Lambda }$ as to a set (not a tree) which may be void as well (remember the convention~\eqref{convention}).

Suppose that $j_{\emptyset }(\mathcal{A})=r.$ Our first concern is to
precise how $\Lambda \subset \mathcal{E(A)}$ is decomposed on each of the subtrees $\mathcal{A}_{i}^{\prime },i=1,...,r$. We define 
\begin{equation}
\Lambda _{i}=\{u\in \mathcal{A}_{i}^{\prime }:iu\in \Lambda \}.  \label{V12}
\end{equation}%
We stress that, if no descendant of $i$ belongs to $\Lambda$, then we have $\{u\in \mathcal{A}_{i}^{\prime }:iu\in \Lambda \}=\varnothing $ (void set).
We also have%
\begin{equation}
\Lambda _{i}=\{\emptyset \}\text{ (ancestor)\quad } \text{if}\quad i\in \Lambda .
\label{V12'}
\end{equation}

We define now $Q_{l}^{\mathcal{A}_{\Lambda }}$ recursively. First, if $%
\mathcal{A}_{\Lambda }=\varnothing $ (void set) or if $\mathcal{A}_{\Lambda
}=\{\emptyset \}$ (ancestor) we define%
\begin{equation*}
Q_{l}^{\varnothing }=Q_{l},\quad Q_{l}^{\{\emptyset \}}=Q_{l+1}^{[n]}.
\end{equation*}%
Otherwise we have $j_{\emptyset }(\mathcal{A)}=r\geq 1$, and we define 
\begin{equation}
Q_{l}^{\mathcal{A}_{\Lambda }}=Q_{l+1}^{[n-\kappa _{r}^{\prime
}]}\prod_{i=1}^{r}Q_{l+1}^{(\mathcal{A}_{i}^{\prime })_{\Lambda
_{i}}}Q_{l+1}^{[\kappa _{i}-\kappa _{i-1}^{\prime }]}.  \label{V9}
\end{equation}%
with $(\kappa _{1},...,\kappa _{r})=\kappa _{\emptyset }(\mathcal{A})$ and $%
\Lambda _{i}$ defined in (\ref{V12}).

Before going further, we construct the grid $G_{l}(\mathcal{A}_{\Lambda })$
in a similar way with $G_{l}(\mathcal{A})$ defined in (\ref{G2}). We denote $j_{\emptyset }^{\Lambda }(\mathcal{A}):=j_{\emptyset }(\mathcal{A})-\card(\{1,...,j_{\emptyset }(\mathcal{A)\}}\cap \Lambda) .$ So $j_{\emptyset
}^{\Lambda }(\mathcal{A)}$ represents the number of sons of the ancestor $%
\emptyset $ which are not in $\Lambda $ (so, that are alive after killing
the individuals from $\Lambda$). We also denote $\{i_{1},...,i_{j_{\emptyset }^{\Lambda }(\mathcal{A)}}\}=\{1,...,j_{\emptyset }(\mathcal{A})\}\setminus \Lambda$, the indices of the surviving sons. Then, we define (with the convention $\cup
_{j=1}^{0}=\varnothing \}$\ 
\begin{equation}
G_{l}(\mathcal{A}_{\Lambda })=\{qh_{l+1},q=0,...,n\} \cup \left( \cup _{j=1}^{j_{\emptyset }^{\Lambda }(\mathcal{%
A)}}\{\kappa _{i_{j}}(\emptyset) h_{l+1}+G_{l+1}((\mathcal{A}_{i}^{\prime })_{\Lambda
_{i}}) \}\right)   \label{G8}
\end{equation}%
if $\cA_\Lambda \not = \varnothing$, and $G_{l}(\varnothing)=\{0,h_{l}\}$. 
Here $\Lambda _{i}$ is the set defined in (\ref{V12}). So, we use the
refinement procedure for $i_{j}$ only, and not for every $i=1,\dots,j_{\emptyset }(\mathcal{A})$. In the case $%
\Lambda =\varnothing $ (void set) $G_{l}(\mathcal{A}_{\Lambda })$ coincides with $G_{l}(\mathcal{A})$. As for Lemma~\ref{lem_grids}, we can show that 
\begin{equation*}
G_{l}(\mathcal{A}_{\Lambda })=\{0,h_l\} \cup\left( \cup_{u\in \mathcal{A}_\Lambda
}\{t_{l}(u)+kh_{l+\left\vert u\right\vert +1},k=0,...,n\}\right).
\end{equation*}
Note that we need to add the union with $\{0,h_l\} $ for the case $\mathcal{A}_\Lambda=\varnothing$, i.e. when $\cA=\Lambda=\{\emptyset\}$.
We denote%
\begin{equation}
\Pi _{l}(\mathcal{A}_{\Lambda })=\{0=s_{0}<s_{1}<...<s_{m}=h_{l}\}\subset
\{qh_{l+r+1},q=0,...,n^{r+1}\}  \label{G9}
\end{equation}%
the reordering of $G_{l}(\mathcal{A}_{\Lambda })$. We notice that for every $k=1,....,m$ one has $s_{k}-s_{k-1}=h_{l+p_{k}}$ for some $p_{k}=1,2,...,|\mathcal{A}_{\Lambda }|$. Thus, we produce
a sequence $p(\mathcal{A}_{\Lambda },\kappa (\mathcal{A}_{\Lambda }))$
associated to~$\mathcal{A}_{\Lambda }$, and we have 
\begin{equation}
Q_{l}^{\mathcal{A}_{\Lambda }}=Q_{l+p_{1}(\mathcal{A}_{\Lambda },\kappa (%
\mathcal{A}_{\Lambda }))}\dots Q_{l+p_{m_{\Lambda }}(\mathcal{A}%
_{\Lambda },\kappa (\mathcal{A}_{\Lambda }))}.  \label{G10}
\end{equation}

\begin{proposition}\label{landa} Let $\Gamma _{l}^{\mathcal{A}}$\ defined in (\ref{Op4}) \ and
(\ref{Op5}) and $Q_{l}^{\mathcal{A}_{\Lambda }}$ defined in~(\ref{V9}). Then 
\begin{equation}
\Gamma _{l}^{\mathcal{A}}=\sum_{\Lambda \subset \mathcal{E(A)}%
}(-1)^{\card(\Lambda) }Q_{l}^{\mathcal{A}_{\Lambda }}.
\label{V11}
\end{equation}%
The above sum includes $\Lambda =\varnothing $ (void set) and $\Lambda =%
\mathcal{E(A)}.$
\end{proposition}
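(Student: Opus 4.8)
The plan is to prove \eqref{V11} by induction on the depth $\left\vert\mathcal{A}\right\vert$, uniformly over the level $l$, by matching the recursive definition \eqref{Op5} of $\Gamma_l^{\mathcal{A}}$ against the recursive definition \eqref{V9} of $Q_l^{\mathcal{A}_\Lambda}$. For the base case $\mathcal{A}=\{\emptyset\}$ one has $\mathcal{E}(\mathcal{A})=\{\emptyset\}$, so the only subsets $\Lambda$ are $\varnothing$ and $\{\emptyset\}$, and the right-hand side of \eqref{V11} reads $Q_l^{\{\emptyset\}_\varnothing}-Q_l^{\{\emptyset\}_{\{\emptyset\}}}=Q_{l+1}^{[n]}-Q_l=\Gamma_l^{\{\emptyset\}}$ by \eqref{Op4}.

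For the inductive step I would assume \eqref{V11} for every random tree of depth $<q$ and every $l$, and take $\left\vert\mathcal{A}\right\vert=q\ge1$, so that $r:=j_\emptyset(\mathcal{A})\ge1$; write $(\kappa_1,\dots,\kappa_r)=\kappa_\emptyset(\mathcal{A})$, and let each subtree $\mathcal{A}_i'$ carry the inherited labels $\kappa^i$ of Remark~\ref{random_tree_heritage}. Each $\mathcal{A}_i'$ has depth at most $q-1$, so the induction hypothesis gives $\Gamma_{l+1}^{\mathcal{A}_i'}=\sum_{\Lambda_i\subset\mathcal{E}(\mathcal{A}_i')}(-1)^{\card(\Lambda_i)}Q_{l+1}^{(\mathcal{A}_i')_{\Lambda_i}}$. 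Plugging this into \eqref{Op5} and expanding the (non-commutative) product by distributivity — the choice of $\Lambda_i$ in slot $i$ not affecting the other slots — yields
\begin{equation*}
\Gamma_l^{\mathcal{A}}=\sum_{\Lambda_1\subset\mathcal{E}(\mathcal{A}_1'),\dots,\Lambda_r\subset\mathcal{E}(\mathcal{A}_r')}(-1)^{\sum_{i=1}^r\card(\Lambda_i)}\,Q_{l+1}^{[n-\kappa_r']}\prod_{i=1}^r\Bigl(Q_{l+1}^{(\mathcal{A}_i')_{\Lambda_i}}Q_{l+1}^{[\kappa_i-\kappa_{i-1}']}\Bigr).
\end{equation*}

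It then remains to reindex this sum by $\Lambda\subset\mathcal{E}(\mathcal{A})$. Since $r\ge1$, the root $\emptyset$ is not a leaf, hence $\mathcal{E}(\mathcal{A})=\bigsqcup_{i=1}^r i\,\mathcal{E}(\mathcal{A}_i')$ is a disjoint union (with $i\,\mathcal{E}(\mathcal{A}_i')=\{i\}$ in the case $\mathcal{A}_i'=\{\emptyset\}$). Consequently the map $\Lambda\mapsto(\Lambda_1,\dots,\Lambda_r)$, with $\Lambda_i=\{u\in\mathcal{A}_i':iu\in\Lambda\}$ as in \eqref{V12}, is a bijection onto the product of the power sets of the $\mathcal{E}(\mathcal{A}_i')$, and it satisfies $\card(\Lambda)=\sum_{i=1}^r\card(\Lambda_i)$. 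Moreover, by \eqref{V9} applied at level $l$, this very $\Lambda$ gives $Q_l^{\mathcal{A}_\Lambda}=Q_{l+1}^{[n-\kappa_r']}\prod_{i=1}^r(Q_{l+1}^{(\mathcal{A}_i')_{\Lambda_i}}Q_{l+1}^{[\kappa_i-\kappa_{i-1}']})$, where one uses $Q_{l+1}^{\varnothing}=Q_{l+1}$ for the sons $i\in\Lambda$ (for which \eqref{V12'} forces $\Lambda_i=\{\emptyset\}$, hence $(\mathcal{A}_i')_{\Lambda_i}=\varnothing$) and $Q_{l+1}^{\{\emptyset\}}=Q_{l+1}^{[n]}$ for leaf-sons $i\notin\Lambda$. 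Combining the last display with this identification proves \eqref{V11} for $\mathcal{A}$.

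The algebra here is essentially mechanical; the point deserving genuine care is the combinatorial identification $\mathcal{E}(\mathcal{A})=\bigsqcup_i i\,\mathcal{E}(\mathcal{A}_i')$ together with the additivity $\card(\Lambda)=\sum_i\card(\Lambda_i)$, since this is exactly what makes the signs $(-1)^{\card(\cdot)}$ match after the product is expanded. One also has to treat the degenerate configurations consistently: a son that is already a leaf, a killed son $i$ where \eqref{V12'} imposes $\Lambda_i=\{\emptyset\}$, and the extreme case $\mathcal{A}_\Lambda=\{\emptyset\}$ (all sons are leaves and all are killed), for which one must check that formula \eqref{V9} reproduces the stipulated value $Q_l^{\{\emptyset\}}=Q_{l+1}^{[n]}$ — a short count of the $Q_{l+1}$ factors.
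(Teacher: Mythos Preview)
Your proof is correct and follows essentially the same route as the paper's: induction on $|\mathcal{A}|$, expanding $\Gamma_l^{\mathcal{A}}$ via \eqref{Op5} and the induction hypothesis, distributing the product, and reindexing tuples $(\Lambda_1,\dots,\Lambda_r)$ by $\Lambda\subset\mathcal{E}(\mathcal{A})$ before invoking \eqref{V9}. The paper packages the bijection you describe as the pair $D,D^{-1}$ of Lemma~\ref{lem_decompo_lambda}, whereas you state it directly via the disjoint-union decomposition $\mathcal{E}(\mathcal{A})=\bigsqcup_i i\,\mathcal{E}(\mathcal{A}_i')$; these are the same identification, and your explicit flagging of the degenerate case $\mathcal{A}_\Lambda=\{\emptyset\}$ (where \eqref{V9} must be checked against the stipulated value $Q_{l+1}^{[n]}$) is a point the paper leaves implicit.
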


Before giving the proof of the above proposition, we need to get a more
detailed description of the set $\Lambda $ and of the decomposition given in
(\ref{V12}). Let $\cA$ be such that $j_\emptyset(\cA)>0$, so that $\emptyset \not \in \cE(\cA)$. Then, for any $\Lambda \subset  \cE(\cA)$, we denote%
\begin{equation}
D(\Lambda )=(\Lambda _{1},...,\Lambda _{r}),  \label{V15}
\end{equation}%
where $\Lambda_i$ is defined by \eqref{V12}.
We define now the converse operation: given a sequence of sets $\Lambda
_{i}^{\prime }\subset \mathcal{E(}\mathcal{A}_{i}^{\prime }),i=1,...,r$ we
 define%
\begin{equation}
\Lambda' =\{iu:i=1,...,r,u\in \Lambda _{i}^{\prime }\}.  \label{V16'}
\end{equation}%
In order to precise the structure of $\Lambda'$ we consider the sets of
indices $J_{i}\subset \{1,...,r\}$ defined by%
\begin{eqnarray*}
J_{1} &=&\{i:\Lambda _{i}^{\prime }=\varnothing \text{ (void)}\} \\
J_{2} &=&\{i:\Lambda _{i}^{\prime }=\{\emptyset \}\text{ (ancestor)}\} \\
J_{3} &=&\{1,...,r\}-J_{1}-J_{2}.
\end{eqnarray*}%
 We stress that for $i\in J_{3},$ the set $%
\Lambda _{i}^{\prime }$ is not void and does not contain the ancestor $%
\emptyset .$ Then the set $\Lambda'$ defined in (\ref{V16'}) is given by%
\begin{equation}
\Lambda'=\{i:i\in J_{2}\}\cup _{i\in J_{3}}\{iu:u\in \Lambda _{i}^{\prime }\}
\label{V16}
\end{equation}%
and we define 
\begin{equation}
D^{-1}(\Lambda _{1}^{\prime },...,\Lambda _{r}^{\prime })= \Lambda'.
\label{V17}
\end{equation}%
\begin{lemma}\label{lem_decompo_lambda}Let $\cA$ be a tree such that $j_\emptyset(\cA)=r>0$. Then, we have for any $\Lambda\in \cE(\cA)$ and any $\Lambda _{i}^{\prime }\in \cE(\cA'_i)$, $i=1,\dots,r$
\begin{equation}
DD^{-1}(\Lambda _{1}^{\prime },...,\Lambda _{r}^{\prime })=(\Lambda
_{1}^{\prime },...,\Lambda _{r}^{\prime })\quad and\quad D^{-1}D\Lambda
=\Lambda .  \label{D18}
\end{equation}
\end{lemma}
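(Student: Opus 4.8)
The plan is to verify both identities by directly unwinding the definitions~\eqref{V12}, \eqref{V16'} and~\eqref{V17}; the argument is purely combinatorial on the Neveu addresses, and the only thing requiring care is the bookkeeping of the degenerate slices (void set, singleton ancestor). Set $r=j_\emptyset(\cA)>0$ once and for all. I would first record two elementary facts about the encoding $u\mapsto iu$: for each fixed $i\in\{1,\dots,r\}$ it is injective, and for $i\ne i'$ the sets $\{iu:u\in\cU\}$ and $\{i'u:u\in\cU\}$ are disjoint, since $i$ is exactly the first letter of $iu$ (with the convention $i\emptyset=i$). Secondly, because $r>0$ the ancestor is not a leaf of $\cA$, so every $w\in\cE(\cA)$ has $|w|\ge 1$ and, lying in $\cA$, has first letter $w_{1}\le r$; thus $w=w_{1}(w_{2},\dots,w_{|w|})$ with $(w_{2},\dots,w_{|w|})\in\cA'_{w_{1}}$.

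For $D^{-1}D\Lambda=\Lambda$: write $(\Lambda_{1},\dots,\Lambda_{r})=D\Lambda$, so $\Lambda_{i}=\{u\in\cA'_{i}:iu\in\Lambda\}$. I would first check that $\Lambda_{i}\subset\cE(\cA'_{i})$, so that $D^{-1}$ applies (in the borderline case $i\in\Lambda$, the leaf property of $i$ in $\cA$ forces $\cA'_{i}=\{\emptyset\}$ and hence $\Lambda_{i}=\{\emptyset\}\subset\cE(\cA'_{i})$). By~\eqref{V16'}, $D^{-1}D\Lambda=\{iu:1\le i\le r,\ u\in\Lambda_{i}\}=\{iu:iu\in\Lambda\}$. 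Using the second fact above, every $w\in\Lambda$ equals $w_{1}u$ with $u\in\cA'_{w_{1}}$, and then $u\in\Lambda_{w_{1}}$, so $w\in D^{-1}D\Lambda$; the reverse inclusion is immediate.

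For $DD^{-1}(\Lambda'_{1},\dots,\Lambda'_{r})=(\Lambda'_{1},\dots,\Lambda'_{r})$: set $\Lambda'=D^{-1}(\Lambda'_{1},\dots,\Lambda'_{r})=\{iu:1\le i\le r,\ u\in\Lambda'_{i}\}$. I would first verify $\Lambda'\subset\cE(\cA)$: for $u\in\Lambda'_{i}$ one has $j_{iu}(\cA)=j_{u}(\cA'_{i})=0$ (the case $u=\emptyset$ being allowed only when $\cA'_{i}=\{\emptyset\}$), so $iu\in\cE(\cA)$. Then $D\Lambda'=(\Lambda_{1},\dots,\Lambda_{r})$ with $\Lambda_{i}=\{v\in\cA'_{i}:iv\in\Lambda'\}$, and by the first fact above $iv\in\Lambda'$ holds precisely when $v\in\Lambda'_{i}$; hence $\Lambda_{i}=\Lambda'_{i}$ for every $i$, which is the assertion.

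The main obstacle is not mathematical but organizational: one must keep the three regimes $i\in J_{1}$ ($\Lambda'_{i}=\varnothing$), $i\in J_{2}$ ($\Lambda'_{i}=\{\emptyset\}$) and $i\in J_{3}$ separated throughout, and check at each application that $D$ and $D^{-1}$ produce objects on which the partner map is defined. The explicit formula~\eqref{V16} for $\Lambda'$ already encodes exactly this case split, so in practice the verification can simply be read off from it.
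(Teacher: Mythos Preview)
Your proof is correct and follows the same approach as the paper: a direct unwinding of the definitions~\eqref{V12} and~\eqref{V16'} using the injectivity of $u\mapsto iu$ and the disjointness of the fibers over distinct first letters. You are more thorough than the paper in checking the domain conditions (that $D\Lambda$ actually lands in $\prod_i\cE(\cA'_i)$ and that $D^{-1}(\Lambda'_1,\dots,\Lambda'_r)$ lands in $\cE(\cA)$), but the core argument is identical.
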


\begin{proof}
  We just check the first equality. Let $\Lambda'
=D^{-1}(\Lambda _{1}^{\prime },...,\Lambda _{r}^{\prime }).$ We have to prove
that for each $i=1,...,r$ we have $\Lambda _{i}^{\prime }=D_{i}(\Lambda')$, where $D_i$ is the $i$th coordinate of the application~$D$ defined by~\eqref{V15}. From~\eqref{V16}, we have $\Lambda'=\cup_{1\le i\le r: \Lambda'_i \not = \varnothing }\{iu: u\in \Lambda'_i\}$. Thus, $D_i(\Lambda)=\varnothing$ if $\Lambda'_i  = \varnothing $ and $D_i(\Lambda)=\{u:  u\in \Lambda'_i\}=\Lambda'_i$ otherwise. 
The second equality is verified in a similar way. 
\end{proof}

\begin{proof}[Proof of Proposition \ref{landa}.] If $j_{\emptyset }(\mathcal{A})=0$
then $\mathcal{A}=\{\emptyset \}$ and $\Gamma _{l}^{\mathcal{A}%
}=Q_{l+1}^{n}-Q_{l}=Q_{l}^{\{\emptyset \}}-Q_{l}^{\varnothing }=Q_{l}^{%
\mathcal{A}_{\Lambda _{1}}}-Q_{l}^{\mathcal{A}_{\Lambda _{2}}}$ with $%
\Lambda _{1}$ is the void set and $\Lambda _{2}=\{\emptyset \}.$ So (\ref%
{V11}) holds.

If $j_{\emptyset }(\mathcal{A})=r>0$ then, using the recurrence hypothesis%
\begin{eqnarray*}
\Gamma _{l}^{\mathcal{A}} &=&Q_{l+1}^{[n-\kappa _{r}^{\prime
}]}\prod_{j=1}^{r} \left(\Gamma _{l+1}^{\mathcal{A}_{j}^{\prime }}Q_{l+1}^{[\kappa
_{j}-\kappa _{j-1}^{\prime }]} \right) \\
&=&Q_{l+1}^{[n-\kappa _{r}^{\prime }]}\prod_{j=1}^{r}\left(\sum_{\Lambda
_{j}\subset \mathcal{E(A}_{j}^{\prime }\mathcal{)}}(-1)^{\card(\Lambda
_{j})}Q_{l}^{(\mathcal{A}_{j}^{\prime }\mathcal{)}_{\Lambda
_{j}}}Q_{l+1}^{[\kappa _{j}-\kappa _{j-1}^{\prime }]}\right) \\
&=&\sum_{\Lambda _{j_{1}}\subset \mathcal{E(A}_{1}^{\prime }\mathcal{)}%
}...\sum_{\Lambda _{j_{r}}\subset \mathcal{E(A}_{r}^{\prime }\mathcal{)}%
}(-1)^{\card(\Lambda _{j_{1}}) +\dots+\card(\Lambda
_{j_{r}})}Q_{l+1}^{[n-\kappa _{r}^{\prime
}]}\prod_{k=1}^{r} \left(Q_{l}^{(\mathcal{A}_{k}^{\prime }\mathcal{)}_{\Lambda
_{j_{k}}}} Q_{l+1}^{[\kappa _{k}-\kappa _{k-1}^{\prime }]} \right).
\end{eqnarray*}%
Let $\Lambda =D^{-1}(\Lambda _{j_{1}},...,\Lambda _{j_{r}}).$ We have $\card(\Lambda _{j_{1}}) +\dots+\card(\Lambda
_{j_{r}})=\card(\Lambda)$, and according to~(\ref{V9})
\begin{equation*}
Q_{l}^{\mathcal{A}_{\Lambda }}=Q_{l+1}^{[n-\kappa _{r}^{\prime
}]}\prod_{k=1}^{r} \left( Q_{l}^{(\mathcal{A}_{k}^{\prime }\mathcal{)}_{\Lambda
_{j_{k}}}}Q_{l+1}^{[\kappa _{k}-\kappa _{k-1}^{\prime }]} \right).
\end{equation*}%
Since every $\Lambda \subset \mathcal{E(A)}$ may be decomposed in this way
by Lemma~\ref{lem_decompo_lambda}, we get 
\begin{equation*}
\Gamma _{l}^{\mathcal{A}}=\sum_{\Lambda \subset \mathcal{E(A)}%
}(-1)^{\left\vert \Lambda \right\vert }Q_{l}^{\mathcal{A}_{\Lambda }}.\qedhere
\end{equation*}%
\end{proof}

\subsection{Tree representation of the approximation schemes}\label{Sec_tree_approx}

We define now a family of trees which describes our approximation schemes. For $\nu \geq 1$ and $l \geq 0$, let us define the tree $\mathcal{T}_{l }^{\nu }$ as
follows: 
\begin{equation}
\mathcal{T}_{l }^{\nu }=\{\emptyset \}\cup \left( \bigcup_{i=1}^{m(l,\nu
)-1}i\mathcal{T}_{l +1}^{q_{i}(l ,\nu )}\right) ,  \label{Tree}
\end{equation}%
with $q_{i}(l ,\nu ),m(l,\nu )$ given in (\ref{n1}), (\ref{n1'}) and the
convention $\cup _{i=1}^{0}\{...\}=\varnothing $ (void set). These trees are defined by recurrence, and it is not clear at a first glance that the induction ends. This true by the next lemma.

\begin{lemma}\label{lem_Hk} Let $\alpha>0$. Let us denote for $k\in \N$, 
\begin{equation}\label{defHk}
\cH_{k}=\{(\nu ,l) \in  \N^{2}:\nu \leq \alpha +(1+\alpha )l+\alpha k\}.
\end{equation}%
We have  $\cup _{k\in \N}\cH _{k}=\N^{2}$ and
$$\forall k\in \N, \ (\nu ,l)\in \cH_{k+1} \implies \forall i \in \{1,\dots,m(l,\nu)-1\}, (q_i(l,\nu),l+1)\in \cH_{k}.$$
In particular, the recursion defining~$\cT^\nu_l$ in formula~\eqref{Tree} ends  for every $(\nu ,l)\in \N^2$.
\end{lemma}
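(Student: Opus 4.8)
First I would establish the easy containment $\cup_{k\in\N}\cH_k = \N^2$: given any $(\nu,l)\in\N^2$, choose $k$ large enough that $\alpha k \ge \nu$ (e.g.\ $k = \lceil \nu/\alpha\rceil$); since $\alpha + (1+\alpha)l \ge 0$, we then have $\nu \le \alpha + (1+\alpha)l + \alpha k$, so $(\nu,l)\in\cH_k$. The reverse containment $\cH_k\subset\N^2$ is trivial.

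The heart of the matter is the implication: if $(\nu,l)\in\cH_{k+1}$, then for every $i\in\{1,\dots,m(l,\nu)-1\}$ one has $(q_i(l,\nu),l+1)\in\cH_k$, i.e.
\[
q_i(l,\nu) \le \alpha + (1+\alpha)(l+1) + \alpha k.
\]
Here I would unwind the definition $q_i(l,\nu) = \nu + \lceil i - (1+\alpha)(l+1)(i-1)\rceil$. The index range $1\le i\le m(l,\nu)-1$ means, by~\eqref{n1'}, that $i \le \nu/((1+\alpha)l+\alpha)$, equivalently $i\bigl((1+\alpha)l+\alpha\bigr)\le\nu$, equivalently $i\bigl((1+\alpha)(l+1)-1\bigr)\le\nu$. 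The key algebraic observation is that $i - (1+\alpha)(l+1)(i-1)$ is decreasing in the "weight" $(1+\alpha)(l+1)$ whenever $i\ge 1$; I expect to rewrite it as $i - (1+\alpha)(l+1)(i-1) = (1+\alpha)(l+1) - (i-1)\bigl((1+\alpha)(l+1)-1\bigr)$. Then
\[
q_i(l,\nu) = \nu + \Bigl\lceil (1+\alpha)(l+1) - (i-1)\bigl((1+\alpha)(l+1)-1\bigr)\Bigr\rceil.
\]
Using $(i-1)\bigl((1+\alpha)(l+1)-1\bigr)\ge 0$ (since $i\ge 1$ and $(1+\alpha)(l+1)\ge 1$) is not enough on its own; I would instead argue the stronger bound by noting that when $i\ge 2$ the subtracted term is at least... — actually the clean route is: from $i\bigl((1+\alpha)(l+1)-1\bigr)\le\nu$ we get $(i-1)\bigl((1+\alpha)(l+1)-1\bigr)\le \nu - \bigl((1+\alpha)(l+1)-1\bigr)$, hence the bracket above is at most $(1+\alpha)(l+1) - \nu + (1+\alpha)(l+1) - 1 = 2(1+\alpha)(l+1) - 1 - \nu$, and so $q_i(l,\nu) \le \lceil 2(1+\alpha)(l+1)-1\rceil = 2(1+\alpha)(l+1)-1$ when this quantity is an integer, or its ceiling in general. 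It then remains to check $2(1+\alpha)(l+1)-1 \le \alpha + (1+\alpha)(l+1) + \alpha k$, i.e.\ $(1+\alpha)(l+1) - 1 - \alpha \le \alpha k$, i.e.\ $(1+\alpha)l + 1 \le \alpha(k+1)$; but from $(\nu,l)\in\cH_{k+1}$ and $\nu\ge 1$ we have $1\le\nu\le\alpha+(1+\alpha)l+\alpha(k+1)$, which rearranges precisely to... — here I would need to be careful, and may find I need the hypothesis $\nu\ge 1$ together with the fact that the index set $\{1,\dots,m(l,\nu)-1\}$ is \emph{nonempty} only when $\nu > \alpha+(1+\alpha)l$, which gives extra room. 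I expect this final inequality-chasing, matching the ceiling terms against the definition of $\cH_k$, to be the main obstacle: one must track the ceiling functions carefully and exploit both $\nu\ge1$ and the nonemptiness constraint $i\le m(l,\nu)-1$.

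Finally, for the termination statement: define the rank of $(\nu,l)$ as the least $k$ with $(\nu,l)\in\cH_k$. The implication shows that every child $(q_i(l,\nu),l+1)$ appearing in~\eqref{Tree} has strictly smaller rank than $(\nu,l)$ (rank at most $k$ when $(\nu,l)$ has rank $k+1$; and when $(\nu,l)\in\cH_0$, i.e.\ $\nu\le\alpha+(1+\alpha)l$, we have $m(l,\nu)=1$ so the union in~\eqref{Tree} is empty and the recursion stops immediately). Since the rank is a nonnegative integer that strictly decreases along every branch, the recursion defining $\cT^\nu_l$ terminates, i.e.\ $\cT^\nu_l$ is a finite tree. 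I would close by remarking that this also bounds the depth of $\cT^\nu_l$ by the rank of $(\nu,l)$.
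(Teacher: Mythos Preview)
Your proof of $\cup_k\cH_k=\N^2$ and the final termination argument (via a rank function) are fine and match the paper's reasoning. The central implication, however, contains genuine errors.

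First, the algebraic identity you write,
\[
i-(1+\alpha)(l+1)(i-1)=(1+\alpha)(l+1)-(i-1)\bigl((1+\alpha)(l+1)-1\bigr),
\]
is false: for $i=1$ the left side equals $1$ while the right side equals $(1+\alpha)(l+1)>1$. Second, in your ``clean route'' you derive the \emph{upper} bound $(i-1)\bigl((1+\alpha)(l+1)-1\bigr)\le\nu-\bigl((1+\alpha)(l+1)-1\bigr)$, but to bound the bracket from above you would need a \emph{lower} bound on this quantity; the inequality points the wrong way. You then arrive at needing $(1+\alpha)l\le\alpha k$ (essentially), which does not follow from $(\nu,l)\in\cH_{k+1}$, and you yourself note the argument stalls.

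The paper's route is much shorter and avoids the constraint $i\le m(l,\nu)-1$ entirely for the upper bound. One simply observes that for every $i\ge 1$, $l\ge 0$, $\alpha\ge 0$,
\[
i-(1+\alpha)(l+1)(i-1)\le i-(i-1)=1,
\]
since $(1+\alpha)(l+1)\ge 1$. Hence $q_i(l,\nu)\le\nu+1$. Combining with the hypothesis $\nu\le\alpha+(1+\alpha)l+\alpha(k+1)$ gives
\[
q_i(l,\nu)\le\alpha+(1+\alpha)l+\alpha(k+1)+1=\alpha+(1+\alpha)(l+1)+\alpha k,
\]
which is exactly $(q_i(l,\nu),l+1)\in\cH_k$. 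The constraint $i\le m(l,\nu)-1$ is used only to check $q_i(l,\nu)\ge 0$, so that the pair actually lies in $\N^2$.
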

\begin{proof} Since $\alpha>0$, we have $(\nu,l)\in \cH_{\lceil\max((\nu-l)/\alpha-(l+1),0) \rceil}$  for any $(\nu,l)\in \N^2$, which gives $\cup _{k\in \N}\cH _{k}=\N^{2}$. 
  Let us first observe that for $(\nu,l) \in \cH_0$, we have  $m(l,\nu )=1$ and thus $\mathcal{T}_{l}^{\nu }=\{\emptyset \}$ by~\eqref{Tree}. Therefore, the implication will prove that the recursion ends. Let us take then $(\nu ,l)\in \cH _{k+1}$ and  $i$ such that $1\le i \le m(l,\nu)-1$.  The last inequality implies $\nu\ge (1+\alpha)li+\alpha i$ and then $q_i(l,\nu)\ge 0$. Thus, we have to check  that 
\begin{equation*}
\nu + \lceil i-(1+\alpha )(l +1)(i-1)\rceil=q_{i}(l,\nu )\leq \alpha +(1+\alpha
)(l+1)+\alpha k.
\end{equation*}%
Since $\nu \leq \alpha +(1+\alpha )l+\alpha (k+1)$, it is sufficient to prove
\begin{equation*}
\alpha +(1+\alpha )l+\alpha (k+1)+\lceil i-(1+\alpha )(l +1)(i-1)\rceil\leq \alpha
+(1+\alpha )(l+1)+\alpha k.
\end{equation*}%
After simplifications, this inequality is equivalent to%
\begin{equation*}
\lceil i-(1+\alpha )(l +1)(i-1) \rceil \leq 1,
\end{equation*}%
which clearly holds true for  every $l\in \mathbb{N},i\in \mathbb{N}^{\ast }$ since $\alpha \ge 0$.  
\end{proof}

Now, we explain how we associate an approximation scheme to a finite tree.
In the following we will work with the specific trees $\mathcal{T}_{l}^{\nu}$ constructed above, but for the moment we consider a general finite tree $
\mathcal{T}$. We recall that ${j}_{\emptyset }(\mathcal{T})$ is the
number of sons of the root~$\emptyset $, and for $1\leq i\leq j_{\emptyset }(\mathcal{T})$, $\mathcal{T}_{i}^{\prime }=\{u\in \mathcal{U}
,iu\in \mathcal{T}\}$ is the subtree that is rooted at the node~$i$. For a
finite tree $\mathcal{T}$, we define the approximation scheme $\hat{Q}%
_{h_{l }}(\mathcal{T})$ as follows by induction.

If ${j}_{\emptyset }(\mathcal{T})=0$ then $\mathcal{T}=\{\emptyset \}$ and we put 
\begin{equation*}
\hat{Q}_{h_{l }}(\{\emptyset \})=(P_{h_{l +1}}^{h_{l+1}})^{n}=P_{h_{l}}^{h_{l+1}}.
\end{equation*}%
If ${j}_{\emptyset }(\mathcal{T})\geq 1$ we define by recurrence%
\begin{equation}
\hat{Q}_{h_{l }}(\mathcal{T})=(P_{h_{l +1}}^{h_{l
+1}})^{n}+\sum_{i=1}^{{j}_{\emptyset }(\mathcal{T})}\binom{n}{i}%
\times \E_{\mu _{i}}\left( P_{(n-\kappa _{i}^{\prime })h_{l +1}}^{h_{l
+1}}\prod_{j=1}^{i}(\hat{Q}_{h_{l +1}}(\mathcal{T}_{i}^{\prime
})-P_{h_{l +1}}^{h_{l +1}})P_{(\kappa _{j}-\kappa _{j-1}^{\prime
})h_{l +1}}^{h_{l +1}}\right) .  \label{approx_tree}
\end{equation}%
Since $\left\vert \mathcal{T}_{i}^{\prime }\right\vert =\left\vert \mathcal{T%
}\right\vert -1$ and the tree $\mathcal{T}$ is finite, this induction
clearly ends.

\begin{proposition}
  \label{prop_anyorder} (Tree representation of the approximations of order $\nu$)

  For every $\nu \geq 1,l \geq
0 $, we have 
\begin{equation*}
\hat{Q}_{h_{l }}(\mathcal{T}_{l}^{\nu })=\hat{P}_{h_{l }}^{\nu }
\end{equation*}%
where $\hat{P}_{h_{l }}^{\nu }$ is the approximation defined in~%
\eqref{induction_hatPl}. Consequently, we have
\begin{equation}
\exists C>0, \ \left\Vert (\hat{Q}_{h_{l }}(\mathcal{T}_{l}^{\nu
})-P_{h_{l}})f\right\Vert _{\infty }\leq C \left\Vert f\right\Vert
_{k(l,\nu ),\infty }n^{-\nu },  \label{D3}
\end{equation}%
with $k(l,\nu )$ defined in (\ref{n2}). In particular, taking $l=0$ (recall
that $h_{0}=T)$ we obtain%
\begin{equation}
\exists C>0, \ \left\Vert (\hat{Q}_{T}(\mathcal{T}_{0}^{\nu })-P_{T})f\right\Vert _{\infty
}\leq C\left\Vert f\right\Vert _{k(0,\nu ),\infty }n^{-\nu }.  \label{D4}
\end{equation}
\end{proposition}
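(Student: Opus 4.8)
The plan is to prove the identity $\hat Q_{h_l}(\mathcal T^\nu_l)=\hat P^\nu_{h_l}$ by induction on $k$, where $(\nu,l)\in\cH_k$ in the sense of Lemma~\ref{lem_Hk}. The base case $k=0$ is immediate: for $(\nu,l)\in\cH_0$ we have $m(l,\nu)=1$, so $\cT^\nu_l=\{\emptyset\}$ by~\eqref{Tree}, and $\hat Q_{h_l}(\{\emptyset\})=(P^{h_{l+1}}_{h_{l+1}})^n=P^{h_{l+1}}_{h_l}=\hat P^\nu_{h_l}$ by~\eqref{induction_hatPl}, since the sum there is empty when $m(l,\nu)-1=0$. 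For the inductive step, assume the identity holds for all pairs in $\cH_k$, and take $(\nu,l)\in\cH_{k+1}$ with $m(l,\nu)-1\ge 1$. By Lemma~\ref{lem_Hk}, for each $1\le i\le m(l,\nu)-1$ we have $(q_i(l,\nu),l+1)\in\cH_k$, so the induction hypothesis gives $\hat Q_{h_{l+1}}(\cT^{q_i(l,\nu)}_{l+1})=\hat P^{q_i(l,\nu)}_{h_{l+1}}$.

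Next I would unwind the two definitions and check they match termwise. By~\eqref{Tree}, $(\cT^\nu_l)'_i=\cT^{q_i(l,\nu)}_{l+1}$ for $1\le i\le m(l,\nu)-1$ and $j_\emptyset(\cT^\nu_l)=m(l,\nu)-1$. Plugging this into the recursive definition~\eqref{approx_tree} of $\hat Q_{h_l}(\cT^\nu_l)$ and using the induction hypothesis to replace $\hat Q_{h_{l+1}}((\cT^\nu_l)'_i)$ by $\hat P^{q_i(l,\nu)}_{h_{l+1}}$, the $i$-th summand becomes
\[
\binom{n}{i}\,\E_{\mu_i}\!\left(P^{h_{l+1}}_{(n-\kappa'_i)h_{l+1}}\prod_{j=1}^{i}\bigl(\hat P^{q_i(l,\nu)}_{h_{l+1}}-P^{h_{l+1}}_{h_{l+1}}\bigr)P^{h_{l+1}}_{(\kappa_j-\kappa'_{j-1})h_{l+1}}\right),
\]
which is exactly $I^{\nu,h_{l+1}}_i(n)$ as defined in~\eqref{int} (noting $\prod_{j=0}^{i-1}(\cdots)$ in~\eqref{int} and $\prod_{j=1}^i(\cdots)$ here index the same product after the shift $\kappa'_0=0$, as used already in passing from~\eqref{b3} to~\eqref{b3'}). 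The leading term $(P^{h_{l+1}}_{h_{l+1}})^n=P^{h_{l+1}}_{h_l}$ agrees with the leading term of~\eqref{induction_hatPl}. Hence $\hat Q_{h_l}(\cT^\nu_l)=P^{h_{l+1}}_{h_l}+\sum_{i=1}^{m(l,\nu)-1}I^{\nu,h_{l+1}}_i(n)=\hat P^\nu_{h_l}$, completing the induction.

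Finally, the error bound~\eqref{D3} follows directly: applying Lemma~\ref{lem_rec_approx} with the estimate~\eqref{hyp_rec_err} at level $l+1$ (which itself is obtained by a parallel induction, using~\eqref{b2_2} as the base case for $m(l,\nu)=1$ and the conclusion~\eqref{approx_ordre_nu_lem} of Lemma~\ref{lem_rec_approx} for the inductive step), we get $\|(\hat P^\nu_{h_l}-P_{h_l})f\|_\infty\le C\|f\|_{k(l,\nu),\infty}n^{-\nu}$ with $k(l,\nu)$ as in~\eqref{n2}, and~\eqref{D4} is the special case $l=0$. The only genuinely delicate point is the bookkeeping that makes Lemma~\ref{lem_Hk} applicable — namely that $q_i(l,\nu)$ stays in $\N$ and decreases the ``$\cH_k$-height'' so that the nested induction is well-founded — but this has already been established, so the remaining work is the purely formal termwise matching of~\eqref{approx_tree} against~\eqref{induction_hatPl}.
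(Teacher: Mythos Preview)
Your proof is correct and follows essentially the same approach as the paper: induction on $k$ along the stratification $\cH_k$ of Lemma~\ref{lem_Hk}, with the base case $m(l,\nu)=1$ giving $\cT^\nu_l=\{\emptyset\}$, and the inductive step matching the recursion~\eqref{approx_tree} against~\eqref{induction_hatPl} via $(\cT^\nu_l)'_i=\cT^{q_i(l,\nu)}_{l+1}$ and $j_\emptyset(\cT^\nu_l)=m(l,\nu)-1$. Your treatment is in fact slightly more explicit than the paper's, both in the termwise matching and in spelling out that the error bound~\eqref{D3} requires a parallel induction through Lemma~\ref{lem_rec_approx}.
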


\begin{proof}
We consider the sets $\cH_k$ defined in~\eqref{defHk} and prove the result by induction on~$k$.  
For $(\nu ,l)\in \cH _{0}$ we have $m(l,\nu )=0$ and $\mathcal{T}_{l}^{\nu }=\{\emptyset \}$ so that $\hat{Q}_{h_{l }}(\mathcal{T}_{l}^{\nu })=P_{h_{l }}^{h_{l +1}}=\hat{P}_{h_{l }}^{\nu }$. Let $(\nu,l)\in \cH_{k+1}$. From~(\ref{Tree}), we have $(\mathcal{T}_{l}^{\nu})_{i}^{\prime }=\mathcal{T}_{l+1}^{q_{i}(l,\nu )}$. Using Lemma~\ref{lem_Hk} and the induction hypothesis, we get $\hat{Q}_{l+1}(\mathcal{T}_{l+1}^{q_{i}(l,\nu )})=\hat{P}_{l+1}^{q_{i}(l,\nu )}.$ We also have ${j}_{\emptyset }(\mathcal{T}_{l}^{\nu })=m(l,\nu )-1,$ so the recurrence formulas~\eqref{approx_tree} for $\hat{Q}_{h_{l}}(\mathcal{T}_{l}^{\nu })$ and~\eqref{induction_hatPl} for $\hat{P}_{h_{l }}^{\nu }$ coincide, proving the claim.\end{proof}

We put the above formula in an alternative form which is more enlightening
and easier to handle. We define%
\begin{equation}
\Delta_{l}(\mathcal{T})=\hat{Q}_{h_{l }}(\mathcal{T})-P_{h_{l}}^{h_{l}}.
\label{D1}
\end{equation}%
Then, $\Delta_{l}(\{\emptyset\})=P_{h_{l}}^{h_{l+1}}-P_{h_{l}}^{h_{l}}$ formula~\eqref{approx_tree} is equivalent to
\begin{equation*}
\Delta_{l}(\mathcal{T})=\Delta _{l}(\{\emptyset \})+\sum_{i=1}^{{j}_{\emptyset }(\mathcal{T})}\binom{n}{i}\times \E_{\mu _{i}}\left(
P_{(n-\kappa _{i}^{\prime })h_{l +1}}^{h_{l +1}}\prod_{j=1}^{i}\Delta
_{l+1}(\mathcal{T}_{i}^{\prime })P_{(\kappa _{j}-\kappa _{j-1}^{\prime})h_{l +1}}^{h_{l +1}}\right) .  
\end{equation*}
This is precisely the operator defined in~\eqref{Op8}. We are now able to give the main result in this section, which is a consequence of Propositions~\ref{delta} and~\ref{prop_anyorder}.

\begin{theorem}
\label{MAIN} Suppose that Hypotheses~\eqref{b00} and~\eqref{b000} hold true. Let $\nu \in \N$ be given and let $\mathcal{T}_{0}^{\nu }$ be the tree constructed in (\ref{Tree}) for $l=0$. Let $\mathbf{F}(\mathcal{T}_{0}^{\nu })$ be the family of trees associated to $\mathcal{T}_{0}^{\nu }$ in (\ref{Op11}) and let $c(\mathcal{A})$ be given in (\ref{Op13}). Then, we define 
\begin{equation}
\hat{Q}_{T}(\mathcal{T}_{0}^{\nu })=Q_{0}+\sum_{\mathcal{A}\in \mathbf{F}(%
  \mathcal{T}_{0}^{\nu })}c(\mathcal{A})\E[\Gamma^{\cA}_0]
\label{D5}
\end{equation}%
and we have 
\begin{equation}
\left\Vert (\hat{Q}_{T}(\mathcal{T}_{0}^{\nu })-P_{T})f\right\Vert _{\infty
}\leq C \left\Vert f\right\Vert _{k(0,\nu ),\infty }n^{-\nu }.  \label{D6}
\end{equation}
\end{theorem}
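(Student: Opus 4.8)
The plan is to combine the three preceding results in a purely mechanical way, since Theorem~\ref{MAIN} is essentially an assembly of Proposition~\ref{prop_anyorder} (which controls the error) and Proposition~\ref{delta} (which rewrites the operator in a computable form). First I would invoke Proposition~\ref{prop_anyorder} with $l=0$: since $h_0=T$, it gives
\[
\left\Vert (\hat{Q}_{T}(\mathcal{T}_{0}^{\nu })-P_{T})f\right\Vert _{\infty}\leq C \left\Vert f\right\Vert _{k(0,\nu ),\infty }n^{-\nu },
\]
which is exactly the inequality~\eqref{D6} to be proven. So the only thing that actually needs justifying is that the operator $\hat{Q}_{T}(\mathcal{T}_{0}^{\nu })$ admits the alternative representation~\eqref{D5}.

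For that, I would use the identity $\Delta_{l}(\mathcal{T})=\hat{Q}_{h_{l}}(\mathcal{T})-P_{h_{l}}^{h_{l}}$ from~\eqref{D1}, together with the observation made just before the theorem that, under this substitution, the recursion~\eqref{approx_tree} for $\hat{Q}_{h_{l}}(\mathcal{T})$ becomes precisely the recursion~\eqref{Op8} defining $\Delta_{l}(\mathcal{T})$ (noting that $\Delta_{l}(\{\emptyset\})=P_{h_{l}}^{h_{l+1}}-P_{h_{l}}^{h_{l}}=Q_{l+1}^{[n]}-Q_{l}$ in the notation of~\eqref{Op7}, with $Q_{l}:=P_{h_{l}}^{h_{l}}$ and $Q_{l+1}^{[n]}:=P_{h_{l}}^{h_{l+1}}$). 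Hence $\hat{Q}_{h_{l}}(\mathcal{T})=P_{h_{l}}^{h_{l}}+\Delta_{l}(\mathcal{T})$ with $\Delta_{l}(\mathcal{T})$ being exactly the abstract operator of Subsection~\ref{subsec_oper}. Then Proposition~\ref{delta} applies verbatim and yields
\[
\Delta_{l}(\mathcal{T})=\sum_{\mathcal{A}\in \mathbf{F}(\mathcal{T})}c(\mathcal{A})\E(\Gamma_{l}^{\mathcal{A}}).
\]
Taking $l=0$ and $\mathcal{T}=\mathcal{T}_{0}^{\nu}$, and using that with the above identification $P_{h_0}^{h_0}=Q_0$, I get $\hat{Q}_{T}(\mathcal{T}_{0}^{\nu })=Q_0+\sum_{\mathcal{A}\in \mathbf{F}(\mathcal{T}_{0}^{\nu })}c(\mathcal{A})\E[\Gamma^{\cA}_0]$, which is~\eqref{D5}.

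The one point that requires a little care — and which I expect to be the main (though minor) obstacle — is keeping the notational identifications straight: the abstract Subsection~\ref{subsec_oper} is written for generic operators $Q_{l}$, whereas here we are applying it with $Q_{l}$ instantiated as the Euler-type operator $P_{h_{l}}^{h_{l}}$, so that $Q_{l}^{[k]}=P_{kh_{l}}^{h_{l}}$ and $Q_{l+1}^{[n]}=P_{h_{l}}^{h_{l+1}}$. Once one checks that the initial conditions~\eqref{Op2}, \eqref{Op4}, \eqref{Op7} and the recursions~\eqref{Op1}, \eqref{Op5}, \eqref{Op8} all match their $\hat{Q}$/$\Delta$ counterparts under this dictionary, everything goes through. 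The error bound~\eqref{D6} itself needs no new argument beyond Proposition~\ref{prop_anyorder}; in particular the constant $C$ and the index $k(0,\nu)$ defined in~\eqref{n2} are inherited directly from there.
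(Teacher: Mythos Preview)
Your proposal is correct and matches the paper's approach exactly: the paper states Theorem~\ref{MAIN} as a direct consequence of Propositions~\ref{delta} and~\ref{prop_anyorder}, with the link between them being precisely the identity $\hat{Q}_{h_l}(\mathcal{T})=P_{h_l}^{h_l}+\Delta_l(\mathcal{T})$ from~\eqref{D1} that you spell out. Your notational check is also on target, since by definition $P_{h_l}^{h_l}=Q_l$ and $P_{h_l}^{h_{l+1}}=Q_{l+1}^{[n]}$, so no further translation is needed.
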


\bigskip

\begin{remark}
  The result presented in this section also holds in the more abstract framework described in the introduction, with semigroups defined on a vector space $F$ with seminorms~$\|\|_k$. Under~\eqref{barH1} and~\eqref{barH2}, we have similarly $\left\Vert (\hat{Q}_{T}(\mathcal{T}_{0}^{\nu })-P_{T})f\right\Vert _{0
}\leq C\left\Vert f\right\Vert _{k(0,\nu )}n^{-\nu }$. 
\end{remark}

\section{Probabilistic representation of the approximation semigroup for some Markov processes}\label{Sec_Prob_Repr}

All the results presented in the previous sections apply for an abstract
semigroup $P_{t}$ with a family of approximation schemes corresponding to the
abstract operators $Q_{l}$. If one wants to use a Monte Carlo algorithm,
one needs to use some probabilistic representation for $Q_{l}$ in order to compute the approximation schemes.
This probabilistic representation  may be very different according to the problem at hand. Nonetheless, a crucial common issue is the variance of the estimator. More precisely, the approximation proposed in~\eqref{D5} has to be seen as an addition of correction terms that can be calculated independently. Instead, it is very important to try to calculate jointly the terms appearing in $\E[\Gamma^{\cA}_0f]$ for $\cA \in \mathbf{F}(\cT^\nu_0)$. This is the sum of $2^{\card(\cE(\cA))}$ terms with rather close values since $\E[Q^{\cA_\Lambda}_0f]=P_Tf+O(h_1)$. If one would use independent samples to compute each term, the correcting term $\E[c(\cA)\Gamma^{\cA}_0f] $ would have roughly $c(\cA)^2\times 2^{\card(\cE(\cA))}$ times the variance of the initial basic Monte-Carlo estimator, which would make the approximation~\eqref{D5} poorly efficient. Fortunately, it is in general possible to do much better. 

The goal of this section is to precise the probabilistic representation of the approximation schemes, when considering diffusion processes or Piecewise Deterministic Markov Processes (PDMP). In these cases, it is possible to specify a probabilistic representation of all the schemes $(Q^{\cA_\Lambda}_0, \Lambda\subset \cE(\cA))$ on the same probability space, and such that the variance of $c(\cA)\Gamma^{\cA}_0f$ is bounded. 

\subsection{Probabilistic representation of $Q^{\cA_\Lambda}_0$}\label{subsec_probrepr}
We start by presenting a general framework. We consider a Polish space $\cZ$ and consider a kernel $\Theta: \R_{+}\times \cZ \times \R^{d}\rightarrow \R^{d}$ such that the application $(t,z,x) \rightarrow \Theta (t,z,x)$ is measurable. Moreover, we
consider an independent random variable  $Z:\Omega \rightarrow \cZ$ and define 
\begin{equation*}
Q_{l}f(x)=\E[f(\Theta (h_{l},Z,x))].
\end{equation*}%
We will assume that for some $\beta \in \N$ and $\alpha >0,$ the estimates (\ref{b00}) and (\ref{b000}) hold true. We consider now a grid $\Pi =\{0=s_{0}<s_{1}<....<s_{m}=T\}$ and denote $\delta_{k}=s_{k+1}-s_{k}$. We also consider a sequence of independent copies $Z_{k}$
of $Z$ and  define the random vector fields 
\begin{equation}
\theta _{k}(x)=\Theta (\delta _{k},Z_{k},x)
\label{e1a}
\end{equation}%
and the approximating flow defined by $X_{0}^{\Pi }(x)=x$ and 
\begin{equation}
X_{s_{k+1}}^{\Pi }(x)=\theta _{k}(X_{s_{k}}^{\Pi }(x)).  \label{e1}
\end{equation}%
To a tree $\mathcal{A}$ and to a subset $\Lambda \subset \mathcal{E(A)}$ we
associate $Q_{0}^{\mathcal{A}_{\Lambda }}$ defined in (\ref{V9}) and the
grid $\Pi _{0}(\mathcal{A}_{\Lambda })$ defined in (\ref{G8}). It is easy
to check that we have the probabilistic representation 
\begin{equation*}
\E[Q_{0}^{\mathcal{A}_{\Lambda }}f(x)]=\E[f(X_{T}^{\Pi _{0}(\mathcal{A}_{\Lambda
})}(x))].
\end{equation*}%
Therefore, (\ref{D5})  (with $c(\mathcal{A})$ given in (\ref{Op13})) can be rewritten as
\begin{equation}
\widehat{Q}_{T}(\mathcal{T}_{0}^{\nu })f(x)=\E[f(X_{T}^{\Pi _{0}(\{ \emptyset
\} )}(x))]+\sum_{\mathcal{A\subset }\mathbf{F}(\mathcal{T}_{0}^{\nu })}c(
\mathcal{A})\E\left[ \sum_{\Lambda \subset \mathcal{E(A)}}(-1)^{\left\vert \Lambda
\right\vert } f(X_{T}^{\Pi _{0}(\mathcal{A}_{\Lambda })}(x)) \right].  \label{e8}
\end{equation}%
It is clear that $\E[f(X_{T}^{\Pi _{0}(\mathcal{A}_{\Lambda })}(x))]$ (and
consequently $\widehat{Q}_{T}(\mathcal{T}_{0}^{\nu })f(x))$ may be computed
using Monte-Carlo simulation, and Theorem~\ref{MAIN} gives 
\begin{equation}
  \sup_{x\in \R^{d}}\left\vert P_{T}f(x)-\widehat{Q}_{T}(\mathcal{T}_{0}^{\nu
})f(x)\right\vert \leq C\left\Vert f\right\Vert _{k(0,\nu ),\infty }\times 
\frac{1}{n^{\nu }}.  \label{e3}
\end{equation}

\begin{remark} The above estimate involves $\left\Vert f\right\Vert_{k(0,\nu ),\infty }$ which requires
  much regularity for the test function $f$. However, under some supplementary regularity and non degeneracy assumptions
one may prove convergence in total variation distance. Precisely, one may consider
measurable and bounded test functions and replace $\left\Vert f\right\Vert
_{k(0,\nu ),\infty }$ by $\left\Vert f\right\Vert _{\infty }$ in the estimation of the error.
This has been done in Bally and Rey~\cite{BaRe} for usual
approximation schemes and the uniform grid (which corresponds in our framework to $\nu =1$, i.e. to $\mathcal{T}_{0}^{\nu }=\{\emptyset \}$). The supplementary hypothesis are the following. First, one has to assume that $Z:\Omega\rightarrow \R^q$
satisfies the so called Doeblin condition: there exists $\varepsilon >0, r>0$
and $z\in \R^{q}$ such that for every measurable set $A\subset \{z': |z'-z|<r \}$ one
has $\Px(Z\in A)\geq \varepsilon \lambda (A)$ where $\lambda $ is the Lebesgue
measure. Moreover one has to assume some non degeneracy condition on the gradient of $\Theta$
with respect to~$z$. However the proof is technical and non trivial, so we do not consider this possible extension in
the present paper.
\end{remark}

\subsection{Probabilistic representation of $\Gamma^{\cA_\Lambda}_0$}

We now specify, on some examples, how to sample jointly the random variables $Z$ for all the schemes $X^{\Pi_0(\cA_\Lambda)}(x)$ for $\Lambda\subset \cE(\cA)$.

\subsubsection{Approximation schemes for SDEs}

We deal with approximation schemes for the $d$ dimensional diffusion process 
$X_{t}$ which solves the SDE (\ref{i1}):%
\begin{eqnarray*}
X_{t}(x) &=&x+\sum_{j=1}^{d}\int_{0}^{t}\sigma
_{j}(X_{s}(x))dW_{s}^{j}+\int_{0}^{t}b(X_{s}(x))ds \\
&=&x+\sum_{j=1}^{d}\int_{0}^{t}\sigma _{j}(X_{s}(x))\circ
dW_{s}^{j}+\int_{0}^{t}\overline{b}(X_{s}(x))ds
\end{eqnarray*}%
Here $\circ dW^{j}$ denotes the Stratonovich integral and $\overline{b}$
designates the drift coefficient that one obtains when passing from the It%
\^{o} integral to the Stratonovich integral. We assume that the
coefficients $\sigma_{j}:\R^d\rightarrow \R^d$ and $b:\R^d\rightarrow \R^d$ are $C^\infty$, bounded with bounded derivatives of any order.

We start with the Euler scheme. It corresponds to  
\begin{equation*}
\Theta (\delta ,z,x)=x+\sum_{j=1}^{d}\sigma _{j}(x)\sqrt{\delta} z_{j}+b(x)\delta
\end{equation*}
and $Z$ being distributed as standard normal random variable on~$\R^d$. The finest discretization is $\Pi_0(\cA)=\{0=s_0<s_1<\dots<s_m=h_0=T\}$, and one therefore needs $m$ independent random variables $Z_0,\dots,Z_{m-1}$. The grids $\Pi_0(\cA_\Lambda)$ with $\Lambda \subset \cE(\cA)$ are sub-grids of $\Pi_0(\cA_\Lambda)$. For some indices $i_k$, it goes directly from $s_{i_k}$ to $s_{i_k+n}$ while  the uniform discretization of $[s_{i_k},s_{i_k+n}]$ is contained in $\Pi_0(\cA)$. One takes then $\frac{Z_{i_k}+\dots+Z_{i_k+n-1}}{\sqrt{n}}$ for the corresponding normal variable for $\Pi_0(\cA_\Lambda)$.

We now present the Ninomiya and Victoir scheme~\cite{NiVi}. We use the following notation: for a vector field $V:\R^d\rightarrow \R^d$, we define $\Phi _{V}$ to be the solution of the ODE
\begin{equation}\label{Flow_EDO}
\Phi _{V}(x,t)=x+\int_{0}^{t}V(\Phi _{V}(x,s))ds, \ t\in \R, 
\end{equation}%
and denote $\exp (tV)(x)=\Phi_{V}(x,t)$. Then, we set%
\begin{align*}
\Theta(\delta ,(z,\rho),x)=&1_{\rho=1}\exp (\frac{\delta }{2}\overline{b}%
)\circ \exp (\sqrt{\delta} z_{1}\sigma _{1})\circ ...\circ \exp (\sqrt{\delta} z_{d}\sigma _{d})\circ
\exp (\frac{\delta }{2}\overline{b})(x)\\
&+1_{\rho=0}\exp (\frac{\delta }{2}\overline{b}%
)\circ \exp (\sqrt{\delta} z_{d}\sigma _{d})\circ ...\circ \exp (\sqrt{\delta} z_{1}\sigma _{1})\circ
\exp (\frac{\delta }{2}\overline{b})(x).
\end{align*}

Let $\rho $ be a Bernoulli random variable such that $P(\rho =1)=P(\rho =0)=\frac{1}{2}$ and let $Z\sim \mathcal{N}_d(0,I_d)$ be an independent standard normal random variable on $\R^d$. Then $\Theta(\delta ,(Z,\rho),x)$ represents the
Ninomiya and Victoir scheme. We denote%
\begin{equation*}
Q_{l}f(x)=\E[f(\Theta (h_{l},(Z,\rho),x))].
\end{equation*}%
One may show, adapting for example the proof of Theorem 1.18 in~\cite{Alfonsi}, that 
\begin{equation*}
\left\Vert (P_{h_{l}}-Q_{l})f\right\Vert _{\infty }\leq C\left\Vert
f\right\Vert _{6,\infty }h_{l}^{-3}
\end{equation*}%
and more generally that (\ref{b00}) holds with $\alpha =2$ and $\beta =6$. Therefore, the tree $\mathcal{T}_{0}^{\nu }$ will be different from the one of the Euler scheme and shorter. To sample the Ninomiya and Victoir scheme on the grids $\Pi_0(\cA)$, we take a sequence 
$(Z_k,\rho _{k})_{k\in\{0,\dots, m-1\}}$ of independent copies of $(Z,\rho)$. We define the corresponding flow by 
\begin{equation*}
\overline{X}_{s_{k+1}}(x)=\Theta(\delta _{k},(Z_{k},\rho_k),\overline{X}_{s_{k}}(x)).
\end{equation*}%
For each grid $\Pi_0(\cA_\Lambda)$ with $\Lambda \subset \cE(\cA)$, we again take $\frac{Z_{i_k}+\dots+Z_{i_k+n-1}}{\sqrt{n}}$ each time that the discretization goes from $s_{i_k}$ to $s_{i_k+n}$, and take $\rho_{i_k}$ for the associated Bernoulli variable. 

\subsubsection{Approximation schemes for PDMPs}\label{subsubsec_approx_PDMP}
We consider the infinitesimal operator%
\begin{equation*}
Lf(x)=b(x)\nabla f(x)+\int_{E}(f(x+c(z,x))-f(x))\lambda (x)\nu (dz).
\end{equation*}%
Here $(E,\mathcal{E})$ is a measurable space, $\nu $ is a finite measure on $E$, $b,\lambda :\R^{d}\rightarrow \R^{d}$ are  globally Lipschitz continuous functions, $c:E\times \R^{d}\rightarrow \R^{d}$ is measurable, bounded and Lipschitz continuous with respect to~$x$, uniformly with respect to $z\in E$. We set $\bar{\lambda}(x)=\lambda(x)/\|\lambda\|_\infty$. We denote by $P_{t}$ the semigroup associated to  the infinitesimal operator~$L$. The probabilistic representation of $P_{t}$ is given in the following way.

Let $J$ be a Poisson process with intensity $\nu(E) \|\lambda\|_\infty$, and a sequence of independent random variables $(Z_{k},U_k)_{k\in \N}$ such that 
\begin{equation*}
\Px(Z_{k}\in dz)=\frac{1}{\nu (E)}\nu (dz),\quad \Px(U_{k}\in du)=1_{[0,1]}(u)du,
\end{equation*}%
and $Z_k$ being independent of $U_k$. Then, we define $X_t(x)$ as the solution of
\begin{equation}
X_{t}(x)=x+\int_{0}^{t}b(X_{s})ds+\sum_{k\leq
J_{t}}c(Z_{k},X_{T_{k}-})1_{\{U_{k}\leq \bar \lambda (X_{T_{k}-}) \} },
\label{PDMP2}
\end{equation}%
where $T_k$ is the time of the $k$-th jump of~$J$.  It is well known that under our
hypothesis the above equation has a unique solution: between two jump times $t\in \lbrack T_{k-1},T_{k})$ it follows the deterministic curve given by $dX_{t}=b(X_{t})dt$, and at time~$T_{k}$ it makes the jump $c(Z_{k},X_{T_{k}-})$ if $U_{k}\leq \bar \lambda (X_{s-})$.  This process satisfies  $P_{t}f(x)=\E(f(X_{t}(x)))$. This is one particular possible description of PDMPs. There is a huge literature concerning this type of process and their applications, see e.g.~\cite{Jacobsen}.

We now define the approximation scheme. Let $\tilde{X}_{t}(x)$ be the solution of
\begin{equation}\label{def_Xtilde_PDMP}
\tilde{X}_{t}(x)=x+\sum_{k\leq
J_{t}}c(Z_{k},\tilde{X}_{T_{k}-})1_{\{U_{k}\leq \bar \lambda (\tilde{X}_{T_{k}-}) \} },
\end{equation}
which is the solution of~\eqref{PDMP2} for $b\equiv 0$. Then, we define
$$\widehat{X}_{h}(x)=\tilde{X}_{h}(x+b(x)h) \text{ and } P_{h}^hf(x)=\E(f(\widehat{X}_{h}(x)))=\tilde{P}_hf(x+b(x)h),  $$
where $\tilde{P}_h$ is the semigroup associated to $\tilde{X}$. On the discretization  time-grid $\Pi=\{0=s_0<s_1<\dots<s_m=h_0=T\}$, this amounts to consider
\begin{equation}\label{scheme_PDMP}X^{\Pi}_{s_{i+1}}=\Theta(s_{i+1}-s_i,(J_{s_{i+1}}-J_{s_i},(Z_k,U_k)_{J_{s_i}+1\le k\le J_{s_{i+1}}}),X^{\Pi}_{s_{i}}) ,
\end{equation}
where $\Theta$ is defined recursively by $\Theta(\delta,(0,()),x)=x+b(x)\delta$ and
\begin{align*}
  &\Theta(\delta,(n+1,(z_k,u_k)_{1\le k\le n+1}),x)= 1_{u_{n+1}>\bar \lambda(\Theta(\delta,(n,(z_k,u_k)_{1\le k\le n},x)))} \Theta(\delta,(n,(z_k,u_k)_{1\le k\le n}),x)\\&+1_{u_{n+1}\le \bar \lambda(\Theta(\delta,(n,(z_k,u_k)_{1\le k\le n}),x))}[\Theta(\delta,(n,(z_k,u_k)_{1\le k\le n}),x)+c(z_{n+1},\Theta(\delta,(n,(z_k,u_k)_{1\le k\le n}),x ))].
\end{align*}
Here, the generic Polish space $\cZ$ introduced in Subsection~\ref{subsec_probrepr} is $\cup_{n\in \N}\{(n,(z_k,u_k)_{1\le k\le n}): z_k\in E, u_k\in [0,1]\}$.
Let us note that other approximation schemes are possible. The interest of~\eqref{scheme_PDMP} is that all the schemes $X^{\Pi_0(\cA_\Lambda)}$  with $\Lambda\subset \cE(\cA)$ are sampled from the same random variables $(J_t)_{t\in [0,T]}$ and $(Z_k,U_k)_{1\le k \le J_T}$ and are likely to have very similar jumps, which is interesting to reduce the variance of~$\Gamma^{\cA}_0f$.

Let us assume now that $b$, $\lambda$ and $x\mapsto c(z,x)$  are $C^\infty$, bounded with bounded derivatives (uniformly in $z$). We check that (\ref{b000}) holds in this case. To do so, we introduce $\Phi_b(x,t)$ the flow associated to~$b$, see equation~\eqref{Flow_EDO}. We have (see e.g.~\cite{Jacobsen}, Lemma 7.3.3)
\begin{align}
  P_tf(x)&=e^{-\int_0^t\lambda(\Phi_b(x,s))ds} f(\Phi_b(x,t))\label{Ptf_PDMP}\\& + \int_0^t \lambda(\Phi_b(x,s))e^{-\int_0^s\lambda(\Phi_b(x,u))du} \int_EP_{t-s}f(\Phi_b(x,s)+c(z,\Phi_b(x,s)))\nu(dz)ds. \nonumber
\end{align}
By differentiating this equation, we get by induction on $k$ (we clearly have $\|P_tf\|_\infty\le \|f\|_\infty$ for $k=0$) that
$$\exists C>0, \forall t \in [0,T], \|P_tf\|_{k,\infty}\le C\|f\|_{k,\infty},$$
using the Faà di Bruno formula and Gronwall's lemma.
This property~\eqref{b000} has been studied for more general jump SDEs very recently by Bally, Goreac and Rabiet~\cite{BGR}. The next lemma proves that~\eqref{b00} also holds with with $\alpha=1$ and $\beta=2$.
\begin{lemma}We assume that $b$, $\lambda$ and $x\mapsto c(z,x)$  are $C^\infty$, bounded with bounded derivatives, uniformly in $z$. Then, we have
  $$\forall k\in \N, \exists C \in \R_+^*, \forall f \in C^\infty_b(\R^d), \ \|P_hf-P^h_hf\|_{k,\infty}\le C \| f\|_{k+2,\infty}h^2.$$
\end{lemma}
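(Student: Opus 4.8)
The plan is to establish that \emph{each} of $P_h$ and $P^h_h$ agrees with $f+hLf$ up to a remainder of size $h^{2}$ (measured in $\|\cdot\|_{k,\infty}$ and controlled by $\|f\|_{k+2,\infty}$); the lemma then follows from
$\|P_hf-P^h_hf\|_{k,\infty}\le\|P_hf-f-hLf\|_{k,\infty}+\|P^h_hf-f-hLf\|_{k,\infty}$. For this I would first record elementary seminorm bounds for the generator. Writing $L=b\cdot\nabla+L_0$ with $L_0g(x)=\int_E(g(x+c(z,x))-g(x))\lambda(x)\nu(dz)$, the Faà di Bruno formula~\eqref{APP3} applied to $x\mapsto x+c(z,x)$, together with the boundedness (uniform in $z$) of $c,\lambda$ and all their derivatives, gives $\|L_0g\|_{k,\infty}\le C\|g\|_{k,\infty}$, while the drift part satisfies $\|b\cdot\nabla g\|_{k,\infty}\le C\|g\|_{k+1,\infty}$. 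Hence $\|Lg\|_{k,\infty}\le C\|g\|_{k+1,\infty}$, and iterating, $\|L^2f\|_{k,\infty}\le C\|f\|_{k+2,\infty}$, $\|L_0^2g\|_{k,\infty}\le C\|g\|_{k,\infty}$, $\|L_0g\|_{k+1,\infty}\le C\|g\|_{k+1,\infty}$.

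For the exact semigroup, since all coefficients are smooth and bounded with bounded derivatives, $s\mapsto P_sf$ is $C^1$ on $[0,T]$ with $\partial_sP_sf=P_sLf$ (which can be checked directly by differentiating~\eqref{Ptf_PDMP}). Taylor's formula with integral remainder then gives $P_hf=f+hLf+\int_0^h\int_0^sP_uL^2f\,du\,ds$, and using~\eqref{b000} and $\|L^2f\|_{k,\infty}\le C\|f\|_{k+2,\infty}$ the double integral is bounded in $\|\cdot\|_{k,\infty}$ by $C\|f\|_{k+2,\infty}h^2$.

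For $P^h_h$ I would use the factorization $P^h_hf=\Psi_h(\tilde P_hf)$, where $\Psi_hg(x):=g(x+b(x)h)$ (a linear operator) and $\tilde P_h$ is the pure-jump semigroup of $\tilde X$ in~\eqref{def_Xtilde_PDMP} (the case $b\equiv0$), which satisfies the same regularity bound $\|\tilde P_sg\|_{k,\infty}\le C\|g\|_{k,\infty}$, obtained exactly as~\eqref{b000}. As for $P_h$ — but now losing no derivatives, since neither $\tilde P$ nor $L_0$ does — one gets $\tilde P_hf=f+hL_0f+\rho_hf$ with $\|\rho_hf\|_{k,\infty}\le C\|f\|_{k,\infty}h^2$. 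On the other hand, writing $g(x+b(x)s)-g(x)=\int_0^s\nabla g(x+b(x)u)\cdot b(x)\,du$ and applying Faà di Bruno~\eqref{APP3} and the boundedness of $b$ and its derivatives (for $h\le T$) yields $\|\Psi_hg\|_{k,\infty}\le C\|g\|_{k,\infty}$, $\|\Psi_hg-g\|_{k,\infty}\le C\|g\|_{k+1,\infty}h$, and $\|\Psi_hg-g-h\,b\cdot\nabla g\|_{k,\infty}\le C\|g\|_{k+2,\infty}h^2$. Combining these, applying $\Psi_h$ to $\tilde P_hf=f+hL_0f+\rho_hf$ and using $L=b\cdot\nabla+L_0$,
\begin{align*}
P^h_hf&=\Psi_hf+h\,\Psi_h(L_0f)+\Psi_h(\rho_hf)\\
&=\big(f+h\,b\cdot\nabla f\big)+h\,L_0f+O\big(h^2\|f\|_{k+2,\infty}\big)=f+hLf+O\big(h^2\|f\|_{k+2,\infty}\big)
\end{align*}
in $\|\cdot\|_{k,\infty}$, where for the middle term we used $\|\Psi_h(L_0f)-L_0f\|_{k,\infty}\le C\|L_0f\|_{k+1,\infty}h\le C\|f\|_{k+1,\infty}h$ and for the last $\|\Psi_h(\rho_hf)\|_{k,\infty}\le C\|\rho_hf\|_{k,\infty}$. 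Together with the expansion of $P_h$, this proves the lemma with $\alpha=1,\beta=2$.

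The main work — though each individual step is routine — is the seminorm bookkeeping: one must check that every remainder (from the Taylor expansions of $s\mapsto P_sf$, of $s\mapsto\tilde P_sg$, and of $s\mapsto f(x+b(x)s)$, and from $L^2$, $L_0^2$, $L_0$) loses at most two derivatives of $f$ and carries a constant that is uniform in $x$ and in the jump mark $z$. This is exactly where Faà di Bruno~\eqref{APP3} and the standing assumption that $b,\lambda,c$ are bounded with bounded derivatives are needed. The only mildly delicate bookkeeping point is to keep track of which factor ($b\cdot\nabla$, $L_0$, or $\Psi_h$) is responsible for each derivative loss, so that the final bound is precisely $\|f\|_{k+2,\infty}h^2$ and not worse.
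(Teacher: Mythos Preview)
Your proof is correct and follows essentially the same route as the paper: both arguments show separately that $P_hf=f+hLf+O(h^2\|f\|_{k+2,\infty})$ and $P^h_hf=f+hLf+O(h^2\|f\|_{k+2,\infty})$ in $\|\cdot\|_{k,\infty}$, obtaining the second expansion from the factorization of $P^h_h$ into the pure-jump semigroup $\tilde P_h$ and the Euler drift shift, with Faà di Bruno controlling the compositions. The only difference is organizational: you expand $\tilde P_hf$ first and then push through $\Psi_h$, whereas the paper composes with the drift shift first and then applies $\tilde P_t$; the bookkeeping and the final estimate are the same.
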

\begin{proof}
From~\eqref{Ptf_PDMP} and $f(\Phi_b(x,t))=f(x)+\int_0^t b(\Phi_b(x,s))\nabla f(\Phi_b(x,s))ds$, we get
\begin{align*}
  P_tf(x)-f(x)&=(e^{-\int_0^t\lambda(\Phi_b(x,s))ds}-1) f(\Phi_b(x,t))+\int_0^t b(\Phi_b(x,s))\nabla f(\Phi_b(x,s))ds  \\& + \int_0^t \lambda(\Phi_b(x,s))e^{-\int_0^s\lambda(\Phi_b(x,u))du} \int_EP_{t-s}f(\Phi_b(x,s)+c(z,\Phi_b(x,s)))\nu(dz)ds,
\end{align*}
which leads to
\begin{equation}\label{controle_semigrp}\exists C>0, \forall t\in[0,T], \ \|P_tf-f\|_{k,\infty}\le C\|f\|_{k+1,\infty} t.
\end{equation}
 Since $P_tf=f+\int_0^tP_{t-s}Lf ds$, we get from~\eqref{controle_semigrp}
$\|P_tf-f-tLf\|_{k,\infty}\le Ct^2\|Lf\|_{k+1,\infty} \le Ct^2\|f\|_{k+2,\infty}$. We define $\tilde{\Phi}_b(x,t)=x+b(x)t$ which has bounded derivatives, uniformly in $t\in[0,T]$. Let $\tilde{L}f(x)=\int_{E}(f(x+c(z,x))-f(x))\lambda (x)\nu (dz)$ be the infinitesimal generator of~\eqref{def_Xtilde_PDMP}. We have similarly 
 $$\|\tilde{P}_t(f\circ\tilde{\Phi}_b(\cdot,t))  - f\circ\tilde{\Phi}_b(\cdot,t) -t\tilde{L}(f\circ\tilde{\Phi}_b(\cdot,t))\|_{k,\infty}\le Ct^2\|f\circ\tilde{\Phi}_b(\cdot,t)\|_{k+2,\infty}\le Ct^2\|f\|_{k+2,\infty}.$$
 We obviously have $\|f\circ\tilde{\Phi}_b(\cdot,t)-f -tb\nabla f\|_{k,\infty}\le  Ct^2\|f\|_{k+2,\infty}.$ Since $\|f\circ\tilde{\Phi}_b(\cdot,t)-f\|_{k,\infty}\le  Ct\|f\|_{k+1,\infty}$, we also have $\|t\tilde{L}f\circ\tilde{\Phi}_b(\cdot,t)-t(Lf-b\nabla f)\|_{k,\infty}\le  Ct^2\|f\|_{k+1,\infty}$, which yields to $\|P_hf-P^h_hf\|_{k,\infty}\le C \| f\|_{k+2,\infty}h^2$.
\end{proof}

\subsection{Estimates of the variance on the Euler scheme for SDEs}\label{subsec_variance}

The aim of this section is to estimate the variance of the algorithm given
in (\ref{e1}), (\ref{e8}) in the case of the Euler scheme for SDEs. We consider the $\R^{d}$ valued diffusion process solution of the SDE~\eqref{i1}. Given a random grid 
\begin{equation*}
\Pi (\omega )=\{0=s_{0}(\omega )<s_{1}(\omega )<....<s_{n(\omega )}(\omega
)<T\},
\end{equation*}%
we construct the corresponding Euler scheme by $X_{0}^{\Pi }=x$ and 
\begin{equation*}
X_{s_{i+1}}^{\Pi }=X_{s_{i}}^{\Pi }+\sum_{j=1}^{d}\sigma _{j}(X_{s_{i}}^{\Pi
})(W_{s_{i+1}}^{j}-W_{s_{i}}^{j})+b(X_{s_{i}}^{\Pi })(s_{i+1}-s_{i}).
\end{equation*}%
In (\ref{e8}), we have constructed an approximation scheme based on a linear
combination of $X_{T}^{\Pi _{0}(\mathcal{A}_{\Lambda })}(x).$ We use here
all the notation introduced there. We denote

\begin{equation}\label{def_XiA}
\Upsilon_{\mathcal{A}}f(x)=\sum_{\Lambda \subset \mathcal{E(A)}}(-1)^{\left\vert
\Lambda \right\vert }f(X_{T}^{\Pi _{0}(\mathcal{A}_{\Lambda })}(x)).
\end{equation}

\begin{remark}
The important point here is that all the Euler schemes $X_{T}^{\Pi _{0}(\mathcal{A}_{\Lambda })}(x)$ for $\Lambda \subset \mathcal{E(A)}$ are defined on
the same probability space and constructed with the same Brownian motion~$W$. Thus, all the values of $X_{T}^{\Pi _{0}(\mathcal{A}_{\Lambda })}(x)$ are close. When summing according to~\eqref{def_XiA}, we may then expect that $\Upsilon_{\mathcal{A}}f(x)$ is small with a small variance. This is precised in the next proposition. 
\end{remark}

\begin{theorem}
\label{varience} Suppose that $\sigma_{j},b\in C_{b}^{\infty }(\R^{d})$. Then, we have for any $f \in C_{b}^{\infty }(\R^{d})$,
\begin{equation}
\E(\Upsilon _{\mathcal{A}}^{2}f(x))\leq \frac{C}{n^{2 \sum_{u\in \mathcal{E(A)}}\left\vert u\right\vert }}.  \label{v1}
\end{equation}%
In particular, we have $\E[(c(\mathcal{A)}\Upsilon _{\mathcal{A}}f(x))^2]\le C$ and thus $Var[c(\mathcal{A)}\Upsilon _{\mathcal{A}}f(x)]\le C$.
\end{theorem}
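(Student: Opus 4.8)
The plan is to read $\Upsilon_{\cA}f$ as an iterated finite difference over the leaves of $\cA$ and to control each difference by a uniform (in the starting point) $L^q$ strong error estimate for the Euler scheme. First I would condition on the random grid, i.e.\ on $\kappa(\cA)$; since every bound below is uniform in $\kappa(\cA)$, it suffices to prove~\eqref{v1} conditionally. Once $\kappa(\cA)$ is fixed, the grids $\Pi_0(\cA_\Lambda)$, $\Lambda\subset\cE(\cA)$, are deterministic and I would record the following combinatorial facts, read off from~\eqref{G8} and Lemma~\ref{lem_grids}: to each leaf $u\in\cE(\cA)$ of depth $|u|$ corresponds the interval $I_u=[t_0(u),t_0(u)+h_{|u|}]$ of length $h_{|u|}$; the intervals $(I_u)_{u\in\cE(\cA)}$ are pairwise disjoint (if $u\neq v$ and $w$ is their deepest common ancestor, the children of $w$ through which $u$ and $v$ pass correspond to the disjoint intervals $[\kappa_i(w)h_{|w|+1},(\kappa_i(w)+1)h_{|w|+1}]$ with distinct, hence non-adjacent-or-overlapping, order statistics, and $I_u,I_v$ are nested inside them); on $I_u$ the grid $\Pi_0(\cA_\Lambda)$ performs a single Euler step of size $h_{|u|}$ when $u\in\Lambda$ and $n$ Euler steps of size $h_{|u|+1}$ when $u\notin\Lambda$; and outside $\bigcup_{u\in\cE(\cA)}I_u$ the grid $\Pi_0(\cA_\Lambda)$ does not depend on $\Lambda$ (a non-leaf node always carries the same refinement).

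Next I would turn the alternating sum into a genuine mixed difference. Let $p=\card(\cE(\cA))$ and order the leaves $u_1,\dots,u_p$ by the time location of the $I_{u_k}$. Then $x\mapsto X_T^{\Pi_0(\cA_\Lambda)}(x)$ is the composition in time order of the $\Lambda$-independent random Euler flows $\Psi_0,\dots,\Psi_p$ on the untouched portions and of a map on $I_{u_k}$ that equals the $n$-substep Euler flow $\Phi^{+}_{u_k}$ if $u_k\notin\Lambda$ and the one-step Euler flow $\Phi^{-}_{u_k}$ if $u_k\in\Lambda$, both driven by the \emph{same} Brownian increments over $I_{u_k}$. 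Interpolating $\Phi^{t_k}_{u_k}:=\Phi^{-}_{u_k}+t_k\,\Delta_{u_k}$ with $\Delta_{u_k}:=\Phi^{+}_{u_k}-\Phi^{-}_{u_k}$ and $t_k\in[0,1]$, the random smooth function $G(t_1,\dots,t_p):=f\bigl(\Psi_p\circ\Phi^{t_p}_{u_p}\circ\cdots\circ\Phi^{t_1}_{u_1}\circ\Psi_0(x)\bigr)$ satisfies, since $t_k=0$ corresponds exactly to $u_k\in\Lambda$,
\[
\Upsilon_{\cA}f(x)=\sum_{\varepsilon\in\{0,1\}^p}(-1)^{\#\{k:\varepsilon_k=0\}}G(\varepsilon)=\int_{[0,1]^p}\partial_{t_1}\cdots\partial_{t_p}G(t)\,dt .
\]
Differentiating $G$ with the Faà di Bruno formula~\eqref{APP3}, each monomial of $\partial_{t_1}\cdots\partial_{t_p}G(t)$ is a product of derivatives of $f$ and of the flows $\Psi_j$, $\Phi^{t_k}_{u_k}$ (which have finite moments of all orders, bounded uniformly in $x$ and in $t$, by the stochastic flow estimates underlying~\eqref{b000}), times exactly one factor $\partial^{\gamma_k}\Delta_{u_k}$ per leaf: each $t_k$-derivative produces the vector field $\Delta_{u_k}$ once, possibly differentiated in space by the subsequent $t_j$-derivatives that hit its argument.

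The analytic input is the uniform one-step strong error bound: for every multi-index $\gamma$ and every $q\ge1$ there is $C>0$ with $\bigl\|\sup_{y\in\R^d}|\partial^{\gamma}\Delta_{u}(y)|\bigr\|_{L^q}\le C\,h_{|u|}$. Writing $\Delta_u=(\Phi^{+}_u-X_{h_{|u|}})-(\Phi^{-}_u-X_{h_{|u|}})$, where $X$ is the exact diffusion over $I_u$ started at $y$, each bracket is $O(h_{|u|})$ in every $L^q$ uniformly in $y$ by the Burkholder--Davis--Gundy inequality together with $\E|X_s-y|^q\le Cs^{q/2}$ and $\sigma_j,b\in C_b^\infty(\R^d)$; the same argument on the tangent flows gives the bound for $\partial^\gamma\Delta_u$. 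Plugging this into the previous step, using Hölder's inequality to split $\prod_k\partial^{\gamma_k}\Delta_{u_k}$ over $L^{q_1},\dots,L^{q_p}$ and the bounded factors into a high $L^{q_0}$, yields $\|\partial_{t_1}\cdots\partial_{t_p}G(t)\|_{L^2}\le C\prod_{k=1}^{p}h_{|u_k|}$ uniformly in $t$, hence
\[
\bigl\|\Upsilon_{\cA}f(x)\bigr\|_{L^2}\le C\prod_{u\in\cE(\cA)}h_{|u|}=C\,T^{p}\,n^{-\sum_{u\in\cE(\cA)}|u|},
\]
which is~\eqref{v1} after removing the conditioning (the constant is independent of $\kappa(\cA)$). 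Finally, $c(\cA)=\prod_{u\in\cA}\binom{n}{j_u(\cA)}\le n^{\sum_{u\in\cA}j_u(\cA)}=n^{\card(\cA)-1}$, and $\card(\cA)-1=\sum_{u\in\cE(\cA)}\#\{w\in\cA\setminus\{\emptyset\}:w\preceq u\}$ counted with repetition is dominated by $\sum_{u\in\cE(\cA)}|u|$, so $\E[(c(\cA)\Upsilon_{\cA}f(x))^2]\le C$ and thus $\mathrm{Var}[c(\cA)\Upsilon_{\cA}f(x)]\le C$. The main obstacle is the bookkeeping of the first paragraph — identifying the leaf intervals and their disjointness — together with the uniform-in-$y$ strong error estimate for the tangent flows in the last paragraph; the mixed-difference identity and the Faà di Bruno expansion are then routine.
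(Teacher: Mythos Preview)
Your strategy is the same as the paper's: you identify the same decomposition of the Euler flow on $\Pi_0(\cA_\Lambda)$ into shared pieces $\Psi_j$ and leaf-dependent alternatives $\Phi^{\pm}_{u_k}$ on the disjoint intervals $I_{u_k}$, recognise $\Upsilon_{\cA}f$ as a $p$-fold mixed difference over the leaves, expand by the chain rule, and feed in a one-step strong error bound. The paper packages the mixed-difference estimate as an induction on $r=\card(\cE(\cA))$ (Proposition~\ref{appendix}) rather than via your interpolation integral $\int_{[0,1]^p}\partial_{t_1}\cdots\partial_{t_p}G\,dt$, but the two devices are equivalent; your grid combinatorics and the final inequality $\sum_{u\in\cA}j_u(\cA)\le\sum_{u\in\cE(\cA)}|u|$ also match the paper's.

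There is, however, one genuine slip in the analytic input. You claim $\bigl\|\sup_{y}|\partial^{\gamma}\Delta_u(y)|\bigr\|_{L^q}\le Ch_{|u|}$, but your justification (``each bracket is $O(h_{|u|})$ in every $L^q$ uniformly in $y$'' via BDG) only proves the swapped inequality $\sup_{y}\bigl\|\partial^{\gamma}\Delta_u(y)\bigr\|_{L^q}\le Ch_{|u|}$ --- this is exactly the paper's Lemma~\ref{lemma_flow_der} --- and a supremum over all of $\R^d$ \emph{inside} the $L^q$ norm is not obviously controlled at the same rate. Fortunately you do not need the stronger statement: after H\"older, each factor is $\bigl\|\partial^{\gamma_k}\Delta_{u_k}(Y_k)\bigr\|_{L^{q_k}}$ with $Y_k$ measurable with respect to $\mathcal{F}_{t_0(u_k)}$ while $\Delta_{u_k}$ depends only on the independent Brownian increment over $I_{u_k}$, so conditioning on $Y_k$ gives $\bigl\|\partial^{\gamma_k}\Delta_{u_k}(Y_k)\bigr\|_{L^{q_k}}\le\sup_{y}\bigl\|\partial^{\gamma_k}\Delta_{u_k}(y)\bigr\|_{L^{q_k}}\le Ch_{|u_k|}$; the remaining flow-derivative factors are handled in the same way. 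This conditioning step is precisely the device~\eqref{APP1e} that drives the paper's inductive Proposition~\ref{appendix}. With this correction in place, your proof goes through.
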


\bigskip

The proof needs some preparation. We use the alternative representation
of the random grids $G_{0}(\mathcal{A})$ and $G_{0}(\mathcal{A}_\Lambda)$ with $\Lambda\subset \mathcal{E(A)}$ given by Lemma~\ref{lem_grids} and~\eqref{G8}. 
We recall that $\Pi_{0}(\mathcal{A})$ is the ordered grid $G_{0}(\mathcal{A})$:
\begin{equation*}
\Pi_{0}(\mathcal{A})=\{0=s_{0}<s_{1}<\dots<s_{m}=h_0=T\}.
\end{equation*}%
We write $\mathcal{E(A)}=\{u^{(1)},\dots,u^{(r)}\}$ with $r=\card(\mathcal{E(A)})$. For $k\in \{1,\dots, r\}$, there exists $i_k\in \{0,\dots,m\}$ such that $s_{i_{k}}=t_{0}(u^{(k)})$. We check that these indices are distinct, and we assume without loss of generality that $i_{1}<\dots<i_{r}$.   These are the "extreme times". We also denote
$$j_{k}=i_{k}+n, \ k \in \{1,\dots, r \}, \  j_0=0.$$

\begin{example} We consider $n=3$, $\cA=\{\emptyset,1,2,21\}$ with $\kappa(\emptyset)=(0,2)$ and $\kappa(2)=1$. The grid $\Pi_0(\cA)$ is drawn below to scale, with $s_{i}-s_{i-1}=T/n^{l_i}$, $l_i\in\{1,2,3\}$. 
  \begin{center}
 \begin{tikzpicture}
        \draw[|-] (0,0) -- (3,0);
        \draw[|-] (3,0) -- (6,0);
        \draw[|-|] (6,0) -- (9,0);
        
        \draw[-|] (0,0) -- (1,0);
        \draw[-|] (1,0) -- (2,0);

        \draw[-|] (6,0) -- (7,0);
        \draw[-|] (7,0) -- (8,0);

        \draw[-|] (7,0)   -- (7.33333333334,0);
        \draw[-|] (7.4,0) -- (7.66666666667,0);

        \draw (9,0) node[above] {$T$};
        \draw (0,0) node[above] {$0$};
        
        \draw (0,0) node[below] {$s_0$};
        \draw (1,0) node[below] {$s_1$};
        \draw (2,0) node[below] {$s_2$};
        \draw (3,0) node[below] {$s_3$};
        \draw (6,0) node[below] {$s_4$};
        \draw (7,0) node[below] {$s_5$};
        \draw (7.33333333334,0) node[below] {$s_6$};
        \draw (7.66666666667,0) node[below] {$s_7$};
        \draw (8,0) node[below] {$s_8$};
        \draw (9,0) node[below] {$s_9$};
\end{tikzpicture}
  \end{center}
  On this example, we have $\cE(\cA)=\{1,21 \}$, $r=2$, $i_1=0$, $j_1=3$, $i_2=5$ and $j_2=8$. The three other grids $\Pi_{0}(\mathcal{A}_\Lambda)$ needed in the computation of~\eqref{def_XiA} are
  \begin{center}
    \begin{tabular}{l}
 \begin{tikzpicture}
        \draw[|-] (0,0) -- (3,0);
        \draw[|-] (3,0) -- (6,0);
        \draw[|-|] (6,0) -- (9,0);

        \draw[-|] (6,0) -- (7,0);
        \draw[-|] (7,0) -- (8,0);

        \draw[-|] (7,0)   -- (7.33333333334,0);
        \draw[-|] (7.4,0) -- (7.66666666667,0);

        \draw (9,0) node[above] {$T$};
        \draw (0,0) node[above] {$0$};
        
        \draw (0,0) node[below] {$s_0$};       
        \draw (3,0) node[below] {$s_3$};
        \draw (6,0) node[below] {$s_4$};
        \draw (7,0) node[below] {$s_5$};
        \draw (7.33333333334,0) node[below] {$s_6$};
        \draw (7.66666666667,0) node[below] {$s_7$};
        \draw (8,0) node[below] {$s_8$};
        \draw (9,0) node[below] {$s_9$};
        \draw (9,0) node[right] {$\quad \Lambda=\{1\}$,};
 \end{tikzpicture}
\\
 \begin{tikzpicture}
        \draw[|-] (0,0) -- (3,0);
        \draw[|-] (3,0) -- (6,0);
        \draw[|-|] (6,0) -- (9,0);
        
        \draw[-|] (0,0) -- (1,0);
        \draw[-|] (1,0) -- (2,0);

        \draw[-|] (6,0) -- (7,0);
        \draw[-|] (7,0) -- (8,0);

        \draw (9,0) node[above] {$T$};
        \draw (0,0) node[above] {$0$};
        
        \draw (0,0) node[below] {$s_0$};
        \draw (1,0) node[below] {$s_1$};
        \draw (2,0) node[below] {$s_2$};
        \draw (3,0) node[below] {$s_3$};
        \draw (6,0) node[below] {$s_4$};
        \draw (7,0) node[below] {$s_5$};
       
        \draw (8,0) node[below] {$s_8$};
        \draw (9,0) node[below] {$s_9$};
         \draw (9,0) node[right] {$\quad \Lambda=\{21\}$,};
 \end{tikzpicture}
 \\
 \begin{tikzpicture}
        \draw[|-] (0,0) -- (3,0);
        \draw[|-] (3,0) -- (6,0);
        \draw[|-|] (6,0) -- (9,0);

        \draw[-|] (6,0) -- (7,0);
        \draw[-|] (7,0) -- (8,0);

        \draw (9,0) node[above] {$T$};
        \draw (0,0) node[above] {$0$};
        
        \draw (0,0) node[below] {$s_0$};
        \draw (3,0) node[below] {$s_3$};
        \draw (6,0) node[below] {$s_4$};
        \draw (7,0) node[below] {$s_5$};
       
        \draw (8,0) node[below] {$s_8$};
        \draw (9,0) node[below] {$s_9$};
         \draw (9,0) node[right] {$\quad \Lambda=\{1,21\}$.};
\end{tikzpicture}
    \end{tabular}
    \end{center}

\end{example}

Notice that the grid $\Pi_{0}(\mathcal{A})$ contains, by
construction, all the points in the uniform grid on $[s_{i_{k}},s_{j_{k}}]$,
that is $s_{i_{k}+j},$ with $j=1,\dots,n$. But, if $u^{(k)}\in \Lambda$, the grid
$\Pi_{0}(\mathcal{A}_{\Lambda })$ is not refined between $s_{i_k}$ and $s_{j_k}$ and does not contain $s_{i_{k}+j}$ with $j=1,\dots,n-1$. We deduce the next lemma.

\begin{lemma}\label{lem_pi0}
  Let $\Lambda\subset \mathcal{E(A)}$. We note $\Lambda=\{u^{(k_1)},\dots, u^{(k_\ell)}\}$ with $\ell=\card(\Lambda)$ and set
  $\mathcal{I}^\Lambda= \{0,\dots,m \} \setminus \left(\cup_{\ell'=1}^\ell \{i_{k_{\ell'}}+1,\dots, j_{k_{\ell'}}-1\} \right)$. Then, $\Pi_0(A_\Lambda)=\{s_i,i\in \mathcal{I}\}$.
\end{lemma}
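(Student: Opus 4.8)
The plan is to read off both grids from their explicit descriptions and compare them node by node. For $l=0$, Lemma~\ref{lem_grids} gives $G_{0}(\cA)=\cup_{u\in\cA}\{t_{0}(u)+kh_{|u|+1}:k=0,\dots,n\}$, and the formula displayed just after~\eqref{G8} gives $G_{0}(\cA_{\Lambda})=\{0,h_{0}\}\cup\big(\cup_{u\in\cA_{\Lambda}}\{t_{0}(u)+kh_{|u|+1}:k=0,\dots,n\}\big)$. Since $\cA_{\Lambda}=\cA\setminus\Lambda$ as sets of nodes and $\{0,h_{0}\}\subseteq\{t_{0}(\emptyset)+kh_{1}:k=0,\dots,n\}\subseteq G_{0}(\cA)$, these two identities combine into
\[
G_{0}(\cA)=G_{0}(\cA_{\Lambda})\ \cup\ \bigcup_{u^{(k)}\in\Lambda}\{t_{0}(u^{(k)})+jh_{|u^{(k)}|+1}:j=0,\dots,n\}.
\]
By Lemma~\ref{lem_grids} the contribution of the leaf $u^{(k)}$ to $G_{0}(\cA)$ is precisely the uniform mesh of $[s_{i_{k}},s_{j_{k}}]$ noted above, i.e. $\{s_{i_{k}},s_{i_{k}+1},\dots,s_{j_{k}}\}$ with $j_{k}=i_{k}+n$ and $t_{0}(u^{(k)})+jh_{|u^{(k)}|+1}=s_{i_{k}+j}$. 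Hence the displayed decomposition already reads $G_{0}(\cA)\setminus G_{0}(\cA_{\Lambda})\subseteq\cup_{u^{(k)}\in\Lambda}\{s_{i_{k}},\dots,s_{j_{k}}\}$, and it remains to identify exactly which of these points disappear.

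First I would check that the endpoints $s_{i_{k}},s_{j_{k}}$ of each killed block survive in $G_{0}(\cA_{\Lambda})$. If $u^{(k)}\in\Lambda$ is not the root, it has a father $v$; being a father, $v$ is not a leaf, so $v\notin\cE(\cA)\supseteq\Lambda$, i.e. $v\in\cA_{\Lambda}$. Writing $u^{(k)}=va$ with last coordinate $a$ and $\kappa_{a}(v)\in\{0,\dots,n-1\}$, Lemma~\ref{lem_grids} gives $s_{i_{k}}=t_{0}(u^{(k)})=t_{0}(v)+\kappa_{a}(v)h_{|v|+1}$ and $s_{j_{k}}=s_{i_{k}}+nh_{|u^{(k)}|+1}=t_{0}(v)+(\kappa_{a}(v)+1)h_{|v|+1}$, and both belong to the contribution $\{t_{0}(v)+ph_{|v|+1}:p=0,\dots,n\}$ of $v$ to $G_{0}(\cA_{\Lambda})$ since $0\le\kappa_{a}(v)<\kappa_{a}(v)+1\le n$. (When $\cA=\Lambda=\{\emptyset\}$ there is no father, but then $G_{0}(\cA_{\Lambda})=G_{0}(\varnothing)=\{0,h_{0}\}$ directly and the claim is immediate.) Combined with the decomposition above, this yields $G_{0}(\cA)\setminus G_{0}(\cA_{\Lambda})\subseteq\cup_{u^{(k)}\in\Lambda}\{s_{i_{k}+1},\dots,s_{i_{k}+n-1}\}$, which is exactly the inclusion $\{s_{i}:i\in\cI^{\Lambda}\}\subseteq G_{0}(\cA_{\Lambda})=\Pi_{0}(\cA_{\Lambda})$.

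The reverse inclusion $G_{0}(\cA_{\Lambda})\subseteq\{s_{i}:i\in\cI^{\Lambda}\}$ is the main obstacle, since one must rule out that some surviving node re-creates an interior point of a killed block. For this I would first record the nesting/disjointness property of the elementary intervals: for $u,v\in\cA$, the intervals $[t_{0}(u),t_{0}(u)+h_{|u|}]$ and $[t_{0}(v),t_{0}(v)+h_{|v|}]$ have disjoint interiors unless one of $u,v$ is an ancestor of the other, in which case the descendant's interval lies in the ancestor's. This follows from the formula for $t_{0}$ in Lemma~\ref{lem_grids}, the identity $h_{l+1}=h_{l}/n$, and the strict ordering $\kappa_{i}(w)<\kappa_{i'}(w)$ for $i<i'$, via the estimates $t_{0}(vw)+h_{|vw|}\le t_{0}(v)+h_{|v|}$ for a descendant $vw$ of $v$ and $t_{0}(u)+h_{|u|}\le t_{0}(v)$ for two incomparable nodes $u,v$ with $u$ to the left. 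Now suppose $s_{i_{k}+j}\in G_{0}(\cA_{\Lambda})$ with $u^{(k)}\in\Lambda$ and $1\le j\le n-1$; then $s_{i_{k}+j}\in[t_{0}(v),t_{0}(v)+h_{|v|}]$ for some $v\in\cA_{\Lambda}$, while $s_{i_{k}+j}=t_{0}(u^{(k)})+jh_{|u^{(k)}|+1}$ lies in the open interval $(t_{0}(u^{(k)}),t_{0}(u^{(k)})+h_{|u^{(k)}|})$. The two intervals then meet at an interior point, so by the nesting property one contains the other; since $u^{(k)}$ is a leaf (hence has no descendants) and $v\neq u^{(k)}$, this forces $v$ to be a strict ancestor of $u^{(k)}$. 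Writing $u^{(k)}=v(w_{1},\dots,w_{q})$, one obtains $t_{0}(u^{(k)})=t_{0}(v)+\kappa_{w_{1}}(v)h_{|v|+1}+\rho$ with $0\le\rho<h_{|v|+1}$ (a finite geometric estimate using $\kappa\le n-1$ and finiteness of the tree) and $t_{0}(u^{(k)})+h_{|u^{(k)}|}\le t_{0}(v)+(\kappa_{w_{1}}(v)+1)h_{|v|+1}$; hence the whole open interval $(t_{0}(u^{(k)}),t_{0}(u^{(k)})+h_{|u^{(k)}|})$ is contained in one elementary cell $\big(t_{0}(v)+\kappa_{w_{1}}(v)h_{|v|+1},\,t_{0}(v)+(\kappa_{w_{1}}(v)+1)h_{|v|+1}\big)$ of the uniform mesh generated by $v$, which contains no point of the contribution $\{t_{0}(v)+ph_{|v|+1}:p=0,\dots,n\}$ of $v$. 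This contradicts $s_{i_{k}+j}$ being such a point, so $s_{i_{k}+j}\notin G_{0}(\cA_{\Lambda})$, completing the proof. The only genuinely delicate ingredients are the structural nesting lemma and the bound on $\rho$; everything else is bookkeeping with the definitions of $G_{l}$, $t_{l}$ and $\cA_{\Lambda}$.
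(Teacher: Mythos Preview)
Your proof is correct. The paper does not actually prove this lemma; it merely prefaces it with two sentences of intuition (``the grid $\Pi_{0}(\cA)$ contains, by construction, all the points in the uniform grid on $[s_{i_{k}},s_{j_{k}}]$\dots but, if $u^{(k)}\in\Lambda$, the grid $\Pi_{0}(\cA_{\Lambda})$ is not refined between $s_{i_{k}}$ and $s_{j_{k}}$'') and then writes ``We deduce the next lemma.'' So what you have written is a genuine filling-in of details the authors left implicit.

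Your approach---comparing the explicit node-by-node descriptions of $G_{0}(\cA)$ and $G_{0}(\cA_{\Lambda})$ coming from Lemma~\ref{lem_grids} and the analogous formula after~\eqref{G8}---is the natural one, and matches the spirit of the paper's informal justification. The only part requiring real care is your reverse inclusion, where you must rule out that a surviving ancestor $v$ recreates an interior point $s_{i_{k}+j}$ of a killed block. Your nesting argument and the telescoping bound $\rho+h_{|u^{(k)}|}\le h_{|v|+1}$ handle this cleanly; this is exactly the structural fact hidden behind the paper's phrase ``is not refined between $s_{i_{k}}$ and $s_{j_{k}}$.'' One small remark: when you invoke the nesting property after observing that $s_{i_{k}+j}$ lies in both $[t_{0}(v),t_{0}(v)+h_{|v|}]$ and the open interval for $u^{(k)}$, you implicitly use that a point interior to one closed interval and lying in another forces the open interiors to meet; this is elementary but worth stating, since your nesting lemma is phrased for interiors.
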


We also recall that, in order to construct our scheme (see (\ref{e1})), we
have considered a kernel $\Theta _{\rho }$ and a sequence of independent
random variables $Z_{k}$ and $\rho _{k},$ and we have defined the vector
fields $\theta _{k}(x)=\Theta _{\rho _{k}}(\delta _{k},\sqrt{\delta _{k}}Z_{k},x)$ with $\delta _{k}=s_{k+1}-s_{k}$ (see (\ref{e1a})). In the case of the
Euler scheme, we have a special representation of these random variables
and of these operators. We define%
\begin{equation*}
\theta_{k}(x)=x+\sum_{j=1}^{d}\sigma
_{j}(x)(W_{s_{k+1}}^{j}-W_{s_{k}}^{j})+b(x)(s_{k+1}-s_{k}).
\end{equation*}%
This corresponds to the quantity defined in (\ref{e1a}) with $\delta_{k}=s_{k+1}-s_{k}$ and $Z_{k}=(s_{k+1}-s_{k})^{-1/2}(W_{s_{k+1}}^{j}-W_{s_{k}}^{j}).$ Moreover, for $k=1,...,r=\card(\mathcal{E(A)})$ we define 
\begin{equation*}
\Psi_{k}(x)=\theta_{j_{k-1}}\circ \theta _{j_{k-1}+1}\circ \dots \circ \theta
_{i_{k}-1}(x)=\left(\prod_{j=j_{k-1}}^{i_{k}-1}\right)\theta _{j}(x),
\end{equation*}%
with the convention $\left(\prod_{j=j_{k-1}}^{i_{k}-1}\right)\theta _{j}(x)=x$ if $i_k=j_{k-1}$. This represents the flow of the approximation scheme which runs from $s_{j_{k-1}}$ to $s_{i_{k}}$, and that is common to all the grids $\Pi_{0}(\mathcal{A}_\Lambda)$ for $\Lambda\subset \mathcal{E(A)}$. We also define the flow between $s_{j_{k}}$ and $s_m=T$:
 $$\Psi_{r+1}(x)=\left(\prod_{j=j_{k}}^{m-1}\theta _{j}\right) (x).$$
We now specify if we use or not the refined grid on the interval $[s_{i_{k}},s_{j_{k}}]$. For the case where the grid is refined (i.e. when $u^{(k)}\not \in \Lambda$),
  we define 
\begin{equation*}
\phi _{k}(x)=\left(\prod_{j=i_{k}}^{j_{k}-1}\theta _{j} \right)\circ \Psi _{k}(x).
\end{equation*}%
This is the Euler scheme which starts from $\Psi _{k}(x)$ and runs from $s_{i_{k}}$ to $s_{j_{k}}=s_{i_{k}+n}$ using the uniform step. Instead, for the coarse discretization which goes from $s_{i_{k}}$ to $s_{i_{k}+n}$ directly in one single step (i.e.  when $u^{(k)} \in \Lambda$), we set  %
\begin{equation*}
\Phi _{k}(x)=\Theta_k\circ\Psi _{k}(x),\text{ with } \Theta_k(x)=x+\sum_{j=1}^{d}\sigma _{j}(x)(W_{s_{j_{k}}}^{j}-W_{s_{i_{k}}}^{j})+b(x)(s_{j_{k}}-s_{i_{k}}).
\end{equation*}
Now, we are able to define the flow of the whole Euler scheme on the grid $\Pi_0(\cA_\Lambda)$. 
If $u^{(k)}\in \Lambda $, we use $\Phi_{k}$ in order to go from $s_{j_{k-1}}$ to $s_{j_k}$. Instead, if $u^{(k)}\in \Lambda$ we use $\phi _{k}$. Thus,  we
define, for $k=1,...,r$ 
\begin{eqnarray*}
\theta _{k}^{\Lambda } &=&1_{u^{(k)}\in \Lambda}\Phi_{k} + 1_{u^{(k)}\not \in \Lambda}\phi_{k}.
\end{eqnarray*}%
From Lemma~\ref{lem_pi0}, we get
\begin{equation*}
X_{T}^{\Pi _{0}(\mathcal{A}_{\Lambda })}(x)=\Psi _{r+1}\circ \theta_{(r)}^{\Lambda }, \text{ with } \theta _{(r)}^{\Lambda }=\theta _{r}^{\Lambda }\circ ...\circ \theta
_{1}^{\Lambda }.
\end{equation*}%
As a consequence, we have
\begin{equation*}
\Upsilon _{\mathcal{A}}f(x)=\sum_{\Lambda \subset \mathcal{E(A)}}(-1)^{\card(\Lambda) }f(X_{T}^{\Pi _{0}(\mathcal{A}_{\Lambda
})}(x))=\sum_{\Lambda \subset \mathcal{E(A)}}(-1)^{\card( \Lambda)
 }f(\Psi _{r}\circ \theta _{(r)}^{\Lambda })(x)=\Gamma
_{r}^{\varnothing }(f\circ \Psi _{r})(x)
\end{equation*}%
with $r=\card(\mathcal{E(A)}) $ and $\Gamma _{r}^{\varnothing
}(f\circ \Psi _{r})$ defined in (\ref{APP1d}). We are now in the framework
of Appendix A. The above formula has to be understood in the following
way: $\theta _{k}^{\Lambda }$ represents the approximating flow associated
to the grid $\Pi _{0}(\mathcal{A}_{\Lambda })$ which runs from $s_{j_{k-1}}$
to $s_{i_{k}}.$ So when $k=r,$ we arrive in $s_{i_{r}}.$ This is the last
"extreme time". After this, we run with $\Psi _{r+1}$ up to $s_{m}=T.$

\begin{lemma}\label{lem_simu}
  With the notation above, we  define families $\cX_k$ of $2^k$ elements of $\R^d\times \{-1,1\}$ as follows. We set $\cX_0=\{(x,1)\}$ and for $k\in \{1,\dots,r\}$, we define 
  $$\cX_k=\{(\phi_k(x^{k-1}_j),\epsilon^{k-1}_j), 1\le j\le 2^{k-1}\} \cup \{(\Phi_k(x^{k-1}_j),-\epsilon^{k-1}_j), 1\le j\le 2^{k-1}\},$$
  where $\cX_{k-1}=\{(x^{k-1}_j,\epsilon^{k-1}_j),1\le  j\le 2^{k-1}\}$ and $\cup$ has to be understood as the concatenation symbol. Then, we have
  $$\sum_{\Lambda \subset \mathcal{E(A)}}(-1)^{\card(\Lambda) }f(X_{T}^{\Pi _{0}(\mathcal{A}_{\Lambda
})}(x))=\sum_{j=1}^{2^r}\epsilon_j^r f(\Psi _{r+1}(x^{r}_j)). $$
\end{lemma}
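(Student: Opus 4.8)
The plan is to prove, by induction on $k\in\{0,1,\dots,r\}$, the following statement, which is slightly more general than the lemma: for every measurable $g:\R^d\to\R$,
\begin{equation*}
\sum_{\Lambda\subset\{u^{(1)},\dots,u^{(k)}\}}(-1)^{\card(\Lambda)}\,g\big(\theta_{(k)}^{\Lambda}(x)\big)=\sum_{j=1}^{2^k}\epsilon_j^k\, g\big(x_j^k\big),
\end{equation*}
where $\theta_{(k)}^{\Lambda}=\theta_k^{\Lambda}\circ\dots\circ\theta_1^{\Lambda}$ and $\cX_k=\{(x_j^k,\epsilon_j^k):1\le j\le 2^k\}$ is the family built in the statement. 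Once this is established, taking $k=r$ and $g=f\circ\Psi_{r+1}$, together with the identity $X_T^{\Pi_0(\cA_\Lambda)}(x)=\Psi_{r+1}\circ\theta_{(r)}^{\Lambda}(x)$ proved just above, gives the lemma. The one point to keep in mind is that the induction hypothesis must be stated uniformly over test functions $g$, because in the inductive step it is applied to a function that depends on $g$ and on the $k$-th step of the scheme.

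The base case $k=0$ is immediate: the only subset is $\Lambda=\varnothing$, $\theta_{(0)}^{\varnothing}$ is the identity map, $\cX_0=\{(x,1)\}$, and both sides equal $g(x)$. For the inductive step, fix $k\ge 1$ and split each $\Lambda\subset\{u^{(1)},\dots,u^{(k)}\}$ according to whether or not $u^{(k)}\in\Lambda$; write $\Lambda'=\Lambda\cap\{u^{(1)},\dots,u^{(k-1)}\}$. Since $\theta_{(k)}^{\Lambda}=\theta_k^{\Lambda}\circ\theta_{(k-1)}^{\Lambda'}$, with $\theta_k^{\Lambda}=\phi_k$ when $u^{(k)}\notin\Lambda$ and $\theta_k^{\Lambda}=\Phi_k$, $\card(\Lambda)=\card(\Lambda')+1$ when $u^{(k)}\in\Lambda$, we obtain
\begin{align*}
\sum_{\Lambda\subset\{u^{(1)},\dots,u^{(k)}\}}(-1)^{\card(\Lambda)}\,g\big(\theta_{(k)}^{\Lambda}(x)\big)
&=\sum_{\Lambda'\subset\{u^{(1)},\dots,u^{(k-1)}\}}(-1)^{\card(\Lambda')}\Big[g\big(\phi_k(\theta_{(k-1)}^{\Lambda'}(x))\big)-g\big(\Phi_k(\theta_{(k-1)}^{\Lambda'}(x))\big)\Big]\\
&=\sum_{\Lambda'\subset\{u^{(1)},\dots,u^{(k-1)}\}}(-1)^{\card(\Lambda')}\,h\big(\theta_{(k-1)}^{\Lambda'}(x)\big),
\end{align*}
where $h(y)=g(\phi_k(y))-g(\Phi_k(y))$. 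Applying the induction hypothesis at level $k-1$ with the test function $h$, the last sum equals $\sum_{j=1}^{2^{k-1}}\epsilon_j^{k-1}h(x_j^{k-1})=\sum_{j=1}^{2^{k-1}}\epsilon_j^{k-1}\big[g(\phi_k(x_j^{k-1}))-g(\Phi_k(x_j^{k-1}))\big]$, which is exactly $\sum_{j=1}^{2^k}\epsilon_j^k g(x_j^k)$ by the recursive definition of $\cX_k$ (the first $2^{k-1}$ pairs coming from the $\phi_k$-branch with unchanged signs, the last $2^{k-1}$ from the $\Phi_k$-branch with flipped signs). This closes the induction.

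I do not expect a genuine obstacle here: the argument is a signed expansion over the $2^r$ subsets of the leaves that telescopes cleanly, and the recursive structure of $\cX_k$ mirrors the recursion $\theta_{(k)}^{\Lambda}=\theta_k^{\Lambda}\circ\theta_{(k-1)}^{\Lambda}$ exactly. The only care required is bookkeeping: keeping the induction hypothesis uniform over $g$, tracking the sign $(-1)^{\card(\Lambda)}$ correctly under the decomposition $\Lambda=\Lambda'\sqcup(\Lambda\cap\{u^{(k)}\})$, and matching the two halves of the concatenated family $\cX_k$ with the $\phi_k$- and $\Phi_k$-branches.
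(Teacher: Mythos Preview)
Your proof is correct. The paper itself does not give a proof of this lemma, stating only that it is ``obvious but important for simulation purposes''; your induction on $k$ with the uniformly-quantified test function $g$ is exactly the natural way to make this precise, and the bookkeeping of signs and branches is handled cleanly.
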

Lemma~\ref{lem_simu} is obvious but important for simulation purposes: by branching, it is possible to simulate at the same time all the values of $(X_{T}^{\Pi_{0}(\mathcal{A}_{\Lambda})}(x),(-1)^{\card(\Lambda)})$ as explained in Subsection~\ref{Subsec_Impl}. More precisely, there is no need to store $\Lambda$: adding the sign~$\epsilon$ to the state space makes the branching dynamics Markovian.

\begin{proof}[Proof of Theorem~\ref{varience}]
Our aim now is to check that $\Phi _{k}$ and $\phi _{k}$ verify the
hypothesis of Proposition~\ref{appendix}. Standard estimates concerning Euler schemes
(see the short sketch below) give 
\begin{equation}
\left\Vert \Phi_{k}\right\Vert _{q,p,\infty }:=\sup_{x}\sum_{\left\vert
\alpha \right\vert \leq q}(\E(\left\vert \partial _{x}^{\alpha }\Phi
_{k}(x)\right\vert ^{p}))^{1/p}<\infty   \label{e7}
\end{equation}%
and the same estimate holds for $\phi _{k}.$ Then, as a consequence of
Proposition \ref{appendix} we obtain 
\begin{equation}
\sup_{x}\E\left[ \left(\sum_{\Lambda \subset \mathcal{E(A)}}(-1)^{\left\vert \Lambda
\right\vert }f(X_{T}^{\Pi _{l}(\mathcal{A}_{\Lambda })}(x)) \right)^{2} \right]\leq C\
\prod_{k=1}^{r}\left\Vert \Phi _{k}-\phi _{k}\right\Vert _{r,4,\infty }^{2}
\label{v2}
\end{equation}%
Notice that $s_{j_{k}}-s_{i_{k}}=Tn^{-\left\vert u^{(k)}\right\vert }$ where $s_{i_{k}}=t_{0}(u^{(k)})$. Thus, from Lemma~\ref{lemma_flow_der} we get easily that $\left\Vert \Theta_{k}-\prod_{j=i_{k}}^{j_{k}-1}\theta _{j}\right\Vert _{r,4,\infty }\leq \frac{C}{n^{\left\vert u^{(k)}\right\vert }}$ and then
\begin{equation}
\left\Vert \Phi _{k}-\phi _{k}\right\Vert _{r,4,\infty }\leq \frac{C}{%
n^{\left\vert u^{(k)}\right\vert }},  \label{v3}
\end{equation}%
by using the Faà di Bruno formula. Then (\ref{v1}) follows. Moreover, we have  
\begin{equation*}
c(\mathcal{A})=\prod_{u\in \mathcal{A}}\binom{n}{j_{u}(\mathcal{A})}%
\leq n^{\sum_{u\in \mathcal{A}}j_{u}(\mathcal{A})}.
\end{equation*}%
One checks easily by induction that $\sum_{u\in \mathcal{A}}j_{u}(\mathcal{A})\leq \sum_{u\in \mathcal{E}(\mathcal{A})}\left\vert u\right\vert $, which gives~(\ref{v1}).

We now give a sketch of the proof of (\ref{e7}). We consider the grid $\Pi _{0}(\mathcal{A})=\{0=s_{0}<s_{1}<....<s_{m}=T \}$ given at the beginning of this section.
The corresponding Euler scheme on $[0,T]$ is defined by%
\begin{equation*}
X_{t}(x)=x+\sum_{j=1}^{d}\int_{0}^{t}\sigma _{j}(X_{\tau
(s)}(x))dW_{s}^{j}+\int_{0}^{t}b(X_{\tau (s)}(x))ds,\quad 0\leq t\leq h_{l}
\end{equation*}%
where $\tau (s)=s_{i}$ for $s\in[s_{i}, s_{i+1})$. Then, we have $\Phi_{k}(x)=X_{h_{l}}(x).$ Using Burkholder-Davis-Gundy inequality and the fact that the
coefficients are bounded, we get $\E(\left\vert X_{r}(x)-x\right\vert
^{p})\leq C$. Moreover, the first derivatives satisfy%
\begin{equation*}
\nabla X_{t}(x)=I+\sum_{j=1}^{d}\int_{0}^{t}\nabla \sigma _{j}(X_{\tau
(s)}(x))\nabla X_{\tau (s)}(x)dW_{s}^{j}+\int_{0}^{t}\nabla b(X_{\tau
(s)}(x))\nabla  X_{\tau (s)}(x)ds.
\end{equation*}%
Since $\nabla \sigma _{j}$ and $\nabla b$ are bounded, using  Burkholder-Davis-Gundy
inequality and Gronwall's lemma we get $\E(\left\vert \nabla X_{r}(x)\right\vert ^{p})\leq C.$ For higher order derivatives, the proof is
similar.\end{proof}

\begin{remark}\label{Rk_variance_terms} We have a better estimate for~\eqref{v3} when $\sigma(x)$ is constant. In this case, we have from Lemma~\ref{lemma_flow_der}
  $$\left\Vert \Phi _{k}-\phi _{k}\right\Vert _{1,p,\infty }\leq \frac{C}{ n^{ \frac32   \left\vert u^{(k)}\right\vert }},  $$
  which leads to get $\E(\Upsilon _{\mathcal{A}}^{2}f(x))\leq \frac{C}{n^{3 \sum_{u\in \mathcal{E(A)}}\left\vert u\right\vert }}$ instead of~\eqref{v1}. When $\sigma(x)=0$, we even have $\left\Vert \Phi _{k}-\phi _{k}\right\Vert _{1,p,\infty }\leq \frac{C}{    n^{  2   \left\vert u^{(k)}\right\vert }}$ and thus  $\E(\Upsilon _{\mathcal{A}}^{2}f(x))\leq \frac{C}{n^{4 \sum_{u\in \mathcal{E(A)}}\left\vert u\right\vert }}$.
\end{remark}
\begin{remark}\label{Rk_avoid_calculations}
Since $c(\cA)=O(n^{\sum_{u\in \mathcal{E(A)}}\left\vert u\right\vert })$, we get $\E(|c(\cA)\Upsilon_{\mathcal{A}}f(x)|) =O(n^{(1-a)\sum_{u\in \mathcal{E(A)}}\left\vert u\right\vert })$ with $a=2$ if $\sigma=0$, $a=3/2$ when $\sigma$ is a constant function.  Thus, the computation of some terms in the sum~\eqref{D5} is useless:  we can drop the terms $\E[\Gamma^{\cA}_0]$ for any $\cA$ such that
  $(a-1) \sum_{u\in \mathcal{E(A)}}\left\vert u\right\vert \ge \nu$. More precisely,
$\hat{Q}'_{T}(\mathcal{T}_{0}^{\nu })=Q_{0}+\sum_{\mathcal{A}\in \mathbf{F}(  \mathcal{T}_{0}^{\nu }) : (a-1) \sum_{u\in \mathcal{E(A)}}\left\vert u\right\vert < \nu }c(\mathcal{A})\E[\Gamma^{\cA}_0]$
  also satisfies
  $\left\Vert (\hat{Q}'_{T}(\mathcal{T}_{0}^{\nu })-P_{T})f\right\Vert _{\infty}\leq C_{l}\left\Vert f\right\Vert _{k(0,\nu ),\infty }n^{-\nu }$.
  
  For example, the tree $\cA=\{\emptyset,1,11,2,21\} \in \mathbf{F}(\cT^4_0)$ is such that $\sum_{u\in \mathcal{E(A)}}\left\vert u\right\vert =4$ and its calculation is useless for an approximation of order~$4$ for ODEs ($\sigma=0$).     
\end{remark}

\section{Numerical results}\label{Sec_Num}

\subsection{ Implementation }\label{Subsec_Impl}

First, we have to calculate the tree $\mathcal{T}^\nu_0$ given by Equation~\eqref{Tree} in function of the desired order $\nu$ of convergence. To calculate this tree, we only have to know $\nu$ and the coefficient~$\alpha$ that characterizes the order of convergence of the elementary scheme (see $(H_1)$ hypothesis). For the Euler scheme, we have $\alpha=1$. For example, the tree corresponding to the approximations of order $\nu=4$ and $\nu=6$ constructed with the Euler scheme are given in Figure~\ref{Fig_arbres46}. To compute these trees, we use the induction formula~\eqref{Tree}. To help the reader, we have indicated in the node the convergence order (i.e. the value of $q_i(l,\nu)$ in~\eqref{Tree}) needed in the induction. For example, $q_1(0,4)=5$, $q_2(0,4)=4$ and $q_3(0,4)=3$ are the value indicated for the sons of the ancestor of the tree $\mathcal{T}^4_0$.

\begin{figure}[h]
  \begin{subfigure}{0.35\textwidth} 
  \centering
\begin{tikzpicture}[nodes={draw, circle}, ->] 
\node {4}
child {
node {5}
child {
node {6}
child {
node {7}
}
}
}
child {
node {4}
child {
node {5}
}
}
child {
node {3}
};
\end{tikzpicture}
  \end{subfigure}
    \begin{subfigure}{0.5\textwidth} 
 \centering
\begin{tikzpicture}[nodes={draw, circle}, ->] 
\node {6}
child {
node {7}
child {
node {8}
child {
node {9}
child {
node {10}
child {
node {11}
}
}
}
}
child {
node {5}
}
}
child [ missing ]
child {
node {6}
child {
node {7}
child {
node {8}
child {
node {9}
}
}
}
}
child {
node {5}
child {
node {6}
child {
node {7}
}
}
}
child {
node {4}
child {
node {5}
}
}
child {
node {3}
};
\end{tikzpicture}
  \end{subfigure}

\caption{The trees $\mathcal{T}^4_0$ (left) and  $\mathcal{T}^6_0$ for the Euler scheme (or any elementary scheme with $\alpha=1$). }\label{Fig_arbres46}
\end{figure}
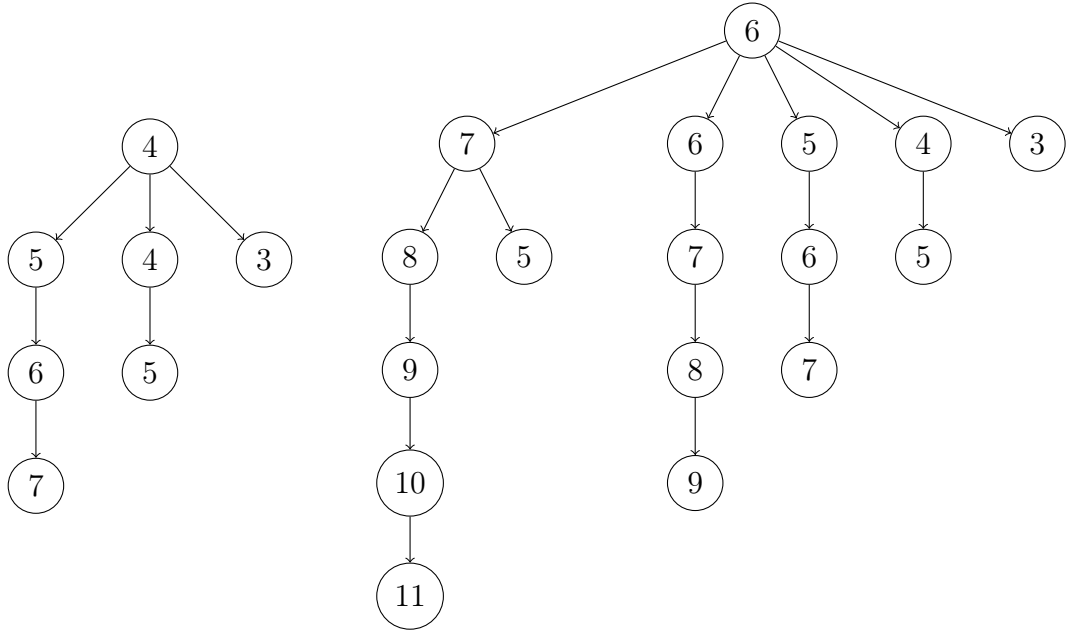

The second step consists in calculating the forest $\mathbf{F}(\mathcal{T}^\nu_0)$. According to Proposition~\ref{delta}, each tree of this forest represents a combination of elementary schemes. For example, using the Neveu notation, we have for the Euler scheme
\begin{align*}
  \mathbf{F}(\mathcal{T}^4_0)=&\{ \{\emptyset \},
\{\emptyset, 1\},
\{\emptyset, 1, 11 \},
\{\emptyset, 1, 11, 111 \},
\{\emptyset, 1, 2\}, 
\{\emptyset, 1, 11, 2\},
\{\emptyset, 1, 2, 21 \}, \\&
\{\emptyset, 1, 12, 2, 21 \},
\{\emptyset, 1, 2, 3\}
\}.
\end{align*}
Let us note that the number of trees in the forest $\mathbf{F}(\mathcal{T}^\nu_0)$ increases rapidly with $\nu$: for the Euler scheme ($\alpha=1$), we have $\card(\mathbf{F}(\mathcal{T}^4_0))=9$, $\card(\mathbf{F}(\mathcal{T}^6_0))=67$, $\card(\mathbf{F}(\mathcal{T}^{10}_0)=29135$. Nonetheless, these forests can be calculated once and for all.

The last step consist in calculating $\mathbb{E}[\Gamma^{\mathcal{A}}_0]$ for all the trees $\mathcal{A}\in \mathbf{F}(\mathcal{T}^\nu_0)$. Then, we get the approximation by using~\eqref{D5}. The key point here is to sample all the Euler schemes from the same Brownian path, as explained in Section~\ref{Sec_Prob_Repr}. Figure~\ref{Fig_grids} gives an illustration of the times grids that are involved in the calculation of $\Gamma^{\mathcal{A}}_0$, with  $\mathcal{A}=\{\emptyset, 1, 11, 2\}$.
\begin{figure}[h]
  \begin{subfigure}{0.35\textwidth} 
  \centering
\begin{tikzpicture}[nodes={draw, circle}, ->] 
\node {$\emptyset$}
child {
node {1}
child {
node {\bf 11}
}
}
child {
node {\bf 2}
};
\end{tikzpicture}
  \end{subfigure}
    \begin{subfigure}{0.5\textwidth} 
      \centering
      \begin{tikzpicture}
        \draw[|-] (0,0) -- (1,0);
        \draw[|-] (1,0) -- (2,0);
        \draw[|-] (2,0) -- (3,0);
        \draw[|-] (3,0) -- (4,0);
        \draw[|-|] (4,0) -- (5,0);    
        \draw[-|] (1.,0) -- (1.2,0);
        \draw[-|] (1.2,0) -- (1.4,0);
        \draw[-|] (1.4,0) -- (1.6,0);
        \draw[-|] (1.6,0) -- (1.8,0);
        \draw (5,0) node[right] {$G_0(\mathcal{A}_{\mathcal{E}(\mathcal{A})}) \ \  +1$};

        \draw[|-|] (0,1) -- (1,1);
        \draw[-|] (1,1) -- (2,1);
        \draw[-|] (2,1) -- (3,1);
        \draw[-|] (3,1) -- (4,1);
        \draw[-|] (4,1) -- (5,1);
        \draw[-|] (1.,1) -- (1.2,1);
        \draw[-|] (1.2,1) -- (1.4,1);
        \draw[-|] (1.4,1) -- (1.6,1);
        \draw[-|] (1.6,1) -- (1.8,1);
        \draw[-|] (4.,1) -- (4.2,1);
        \draw[-|] (4.2,1) -- (4.4,1);
        \draw[-|] (4.4,1) -- (4.6,1);
        \draw[-|] (4.6,1) -- (4.8,1);
        \draw (5,1) node[right] {$G_0(\mathcal{A}_{\{11\}} ) \ \ -1$};

        \draw[|-|] (0,2) -- (1,2);
        \draw[-|] (1,2) -- (2,2);
        \draw[-|] (2,2) -- (3,2);
        \draw[-|] (3,2) -- (4,2);
        \draw[-|] (4,2) -- (5,2);
        \draw[-|] (1.,2) -- (1.2,2);
        \draw[-|] (1.2,2) -- (1.4,2);
        \draw[-|] (1.4,2) -- (1.6,2);
        \draw[-|] (1.6,2) -- (1.8,2);
        \draw[-|] (1.0,2) -- (1.04,2);
        \draw[-|] (1.04,2) -- (1.08,2);
        \draw[-|] (1.08,2) -- (1.12,2);
        \draw[-|] (1.12,2) -- (1.16,2);
        \draw (5,2) node[right] {$G_0(\mathcal{A}_{\{2\}} ) \ \ \  -1$};

        \draw[|-|] (0,3) -- (1,3);
        \draw[-|] (1,3) -- (2,3);
        \draw[-|] (2,3) -- (3,3);
        \draw[-|] (3,3) -- (4,3);
        \draw[-|] (4,3) -- (5,3);
        \draw[-|] (1.,3) -- (1.2,3);
        \draw[-|] (1.2,3) -- (1.4,3);
        \draw[-|] (1.4,3) -- (1.6,3);
        \draw[-|] (1.6,3) -- (1.8,3);
        \draw[-|] (4.,3) -- (4.2,3);
        \draw[-|] (4.2,3) -- (4.4,3);
        \draw[-|] (4.4,3) -- (4.6,3);
        \draw[-|] (4.6,3) -- (4.8,3);
        \draw[-|] (1.0,3) -- (1.04,3);
        \draw[-|] (1.04,3) -- (1.08,3);
        \draw[-|] (1.08,3) -- (1.12,3);
        \draw[-|] (1.12,3) -- (1.16,3);
        \draw (5,3) node[right] {$G_0(\mathcal{A} ) \quad \ \ \ +1$};

\end{tikzpicture}
\end{subfigure} 
    \caption{On the left, we have represented the tree $\mathcal{A}=\{\emptyset, 1, 11, 2\}$ with its leafs $\mathcal{E}(\mathcal{A})=\{ 11, 2\}$ in bold font. On the right, we have indicated the four corresponding time-grids with their weights $\pm 1$ that are used in the calculation of $\Gamma^{\mathcal{A}}_0$ for $n=5$ on the event $\kappa(\emptyset)=(1,4)$ and $\kappa(1)=0$. (Recall that $\kappa(\emptyset)$ is a uniform r.v. on $\{(k,l): 0\le k <l<n\}$ and  $\kappa(1)$ is a uniform r.v. on $\{k: 0\le k <n\}$.)
}\label{Fig_grids}    
\end{figure}
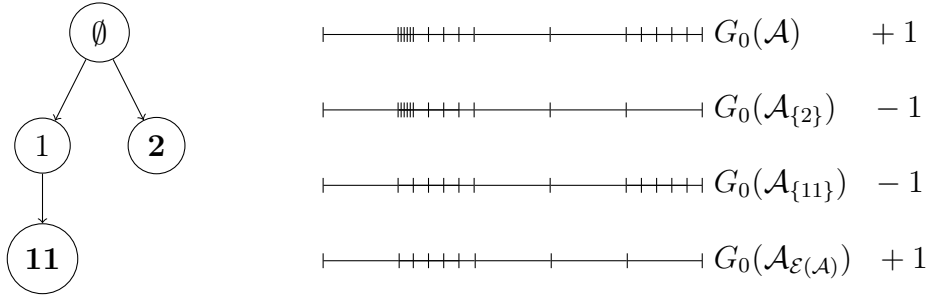
To implement the Euler schemes involved in~$\Gamma^{\mathcal{A}}$, it is possible to do it ``by hands'', i.e. to generate the random tree and then to simulate simultaneously the $2^{\card(\mathcal{E}(\mathcal{A}))}$ schemes. This is easy to do for rather small trees~$\mathcal{A}$, but the drawback is that it requires to write a routine for each $\mathcal{A} \in \mathbf{F}(\mathcal{T}^\nu_0)$. Thus, it is easy to do this direct implementation up to order three, but then it becomes rather cumbersome since the number of routines needed is rather large. Instead of this, it is possible to write a recursive routine that works for any $\mathcal{A}$. This routine starts from one initial value and calculates at the same time the $2^{\card(\mathcal{E}(\mathcal{A}))}$ schemes and branches each time it finds a leaf. It also calculates inductively the weight $\pm 1$ associated to each scheme. This routine works as follows. It takes in arguments a tree $\mathcal{A}$, a step $h_l$, and a set $\mathcal{X}=\{(x_i,\epsilon_i), 1\le i\le 2^M \}$ of initial values $x_i$ with weights $\epsilon_i \in \{-1,+1 \}$. If $\mathcal{A}=\{\emptyset\}$, it samples independent increments $(W_{kh_l/n}-W_{(k-1)h_l/n})_{1\le k\le n}$ and calculate for each $i$, the Euler scheme $\hat{X}^c_{i,h_l}$ on the coarse grid with time step $h_l$ starting from $x_i$ and the Euler scheme $\hat{X}^f_{i,h_l}$ on the fine grid with time step $h_l/n$ starting from $x_i$. It returns the set of~$2^{M+1}$ values
$$\{(\hat{X}^c_{i,h_l},\epsilon_i), 1\le i\le 2^M \} \cup\{(\hat{X}^f_{i,h_l},-\epsilon_i), 1\le i\le 2^M \}.$$
Otherwise, we have $\mathcal{A}=\{\emptyset,1\mathcal{A}'_1, \dots,r\mathcal{A}'_r\}$ with $r\le n$. We draw $\kappa(\emptyset)=(\kappa_1,\dots,\kappa_r)$ a uniform random variable on $\{(k_1,\dots,k_r):0\le k_1<\dots<k_r<n\}$ (see Remark~\ref{sampling_unifrv}). Then, we apply to all the initial values $k_1$~times the Euler scheme with time step~$h_{l+1}=h_l/n$, conserving their weights. They are used as argument to apply inductively the function with $\mathcal{A}'_1$ and $h_{l+1}$. This generates a set of values and weights to which we apply $k_2-k_1$~times the Euler scheme with time step~$h_{l+1}$, and then we apply again inductively the function  with $\mathcal{A}'_2$ and $h_{l+1}$. We repeat this~$r$ times, and finally apply $n-(k_r+1)$~times the Euler scheme with time step~$h_{l+1}$. This inductive algorithm consists precisely in implementing the formula given in Lemma~\ref{lem_simu}.

\begin{remark}\label{sampling_unifrv}To sample a uniform random variable on $\mathcal{S}_{r}:=\{(k_1,\dots,k_r):0\le k_1<\dots<k_r<n\}$ for $r\in \{1,\dots, n\}$, we can proceed as follows. If $r=1$, we simply draw a uniform r.v. on  $\{0,\dots, n-1\}$. For $r\ge 2$, we proceed by induction and draw a uniform random variable $(\kappa'_1,\dots,\kappa'_{r-1})$ on $\mathcal{S}_{r-1}$. Then, we draw  a uniform random variable~$\kappa'_r$ on $\{0,\dots, n-1\}\setminus \{\kappa'_1,\dots,\kappa'_{r-1} \}$. This can be done by sampling an independent random variable $\xi$ that is uniform on $\{0,\dots, n-r\}$ and then set
  $\kappa'_r=\xi+\sum_{i=1}^{r-1}  \mathbf{1}_{\xi+(i-1)\ge \kappa'_i}$. Last, we sort the $\kappa'$, which produces a vector $(\kappa_1,\dots,\kappa_r)$ that is uniformly distributed on  $\mathcal{S}_{r}$. 
\end{remark}

Now that we have an algorithm that is able to calculate $\mathbb{E}[\Gamma^{\mathcal{A}}_0]$ for any tree $\mathcal{A}$, we just have to approximate all these quantities for all the trees $\mathcal{A}\in \mathbf{F}(\mathcal{T}^\nu_0)$ and then to sum these contributions according to~\eqref{D5}. To decide how many samples~$N_\mathcal{A}$ we use to approximate  $\mathbb{E}[\Gamma^{\mathcal{A}}_0]$, we fix a desired precision $\varepsilon>0$, calculate the empirical variance~$\hat{V}_{\mathcal{A}}$ of $c(\mathcal{A})\Gamma^{\mathcal{A}}_0$ on a small sampling and then take $N_\mathcal{A}$ such that $1.96 \sqrt{\hat{V}_{\mathcal{A}}/N_\mathcal{A}} \approx \varepsilon$, so that all the terms have roughly the same statistical error with a 95\% confidence interval half-width equal to~$\varepsilon$.

\subsection{Numerical results for an ODE}

To visualize numerically the orders of convergence provided by~\eqref{D5} for the Euler scheme, it is more convenient to work with ODEs. In this case, the variance of the terms is very small and it is possible to observe the five first order of convergence. In the particular case of a linear ODE $dX_t=k(\theta-X_t)dt$, we can go further, but we can check also that the value of $\Gamma^{\mathcal{A}}_0$ is deterministic and does not depend on the uniform random variables $\kappa$'s. Thus, we have considered the following example
$$ dX_t=\alpha(1-X_t^2)dt,$$
with $X_0=0.4$ and $\alpha=0.1$. The exact value is given by $X_T=\tanh(\textup{arctanh}(X_0)+\alpha T)$. We have drawn on Figure~\ref{Convergence_EDO}, for $T=1$, the values of $\log(|X_T-\hat{\xi}^{n,\nu}_T|)$ in function of $\log(T/n)$ with $\nu=2$,  $\nu=3$, $\nu=4$  and $\nu=5$, where $\hat{\xi}^{N,\nu}_T$ is the estimator of $X_T$ given by equation~\eqref{D5} and $f(x)=x$. The corresponding values of the slopes are $2.003$, $3.025$, $4.056$ and $5.012$ which is in line with what is expected. All the values given on this example are with an half-width of the 95\% confidence interval that does not exceed $3\times 10^{-7}$. Our run for the approximation of order $\nu=6$ already gives with $n=6$ a value that is accurate up to $8\times 10^{-8}$: the exact value $-0.31280256721$ is already in the 95\% confidence interval.
\begin{figure}[h]
  \centering
  \includegraphics[width=\linewidth]{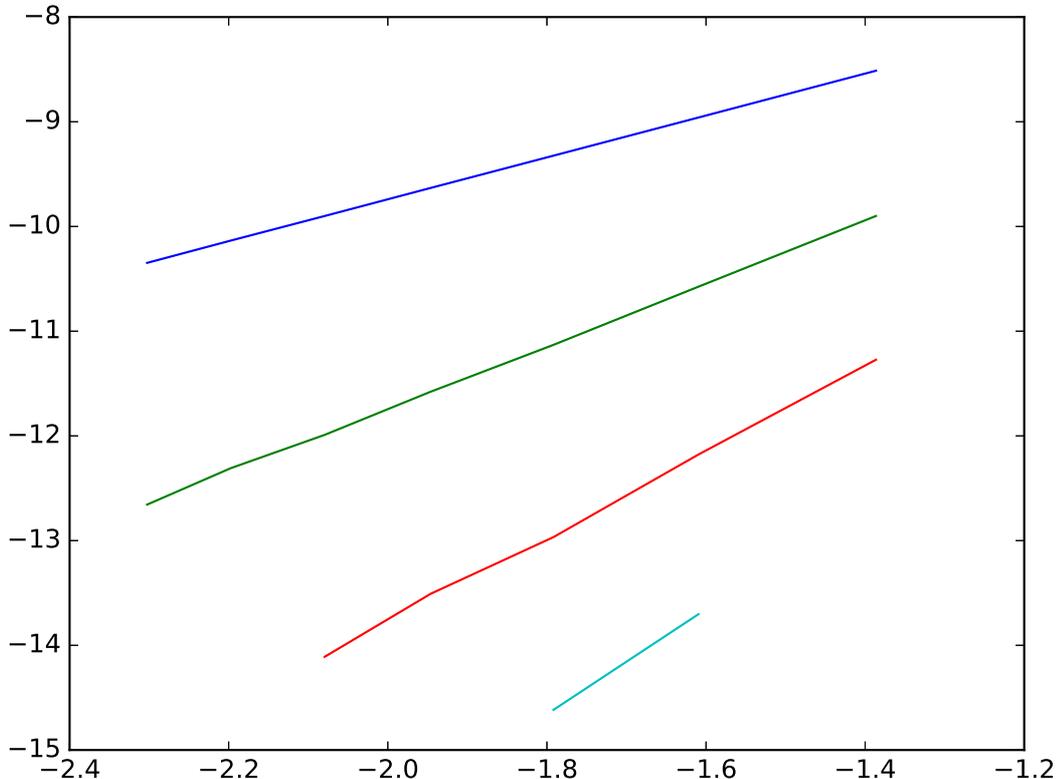}
  \caption{Plot of $\log(|X_1-\hat{\xi}^{n,\nu}_1|)$ in function of~$\log(1/n)$ for $\nu=2$ (blue), $\nu=3$ (green), $\nu=4$ (red) and $\nu=5$ (cyan). } \label{Convergence_EDO}
\end{figure}

Last, let us mention that for this ODE, we have used the same approximation rule as for the SDE and calculated all the terms of~\eqref{D5}. However, as noticed in Remark~\ref{Rk_avoid_calculations}, it is possible to avoid the calculation of many terms. 

\subsection{Numerical results for an SDE}\label{Subsec_Num_SDE}
We now want to illustrate the orders of convergence for the approximation given by~\eqref{D5} for the Euler-Maruyama scheme. We consider the following SDE
$$ dX_t= -kX_t^2dt+\sigma X_t dW_t,$$
with $X_0=1$, $k=1$, $\sigma=0.2$. In Figure~\ref{Convergence_EDS}, we have plotted the approximation of $\mathbb{E}[X_T^2]$ with $T=1$ with the orders $\nu\in \{2,3,4 \}$ in function  of~$1/n$. We still denote by $\hat{\xi}^{n,\nu}_T$ the estimator of $\mathbb{E}[X_T^2]$ given by~\eqref{D5}, using the approximation of order $\nu$ with $n$ time-steps. The  half-width of the 95\% confidence interval is about $2\times 10^{-4}$. The approximation of order $\nu=5$ is already at this level of precision for $n=5$, and we have indicated this value as a reference line for the other schemes. The convergence are again in line with what is expected. 
\begin{figure}[h]
  \centering
  \includegraphics[width=\linewidth]{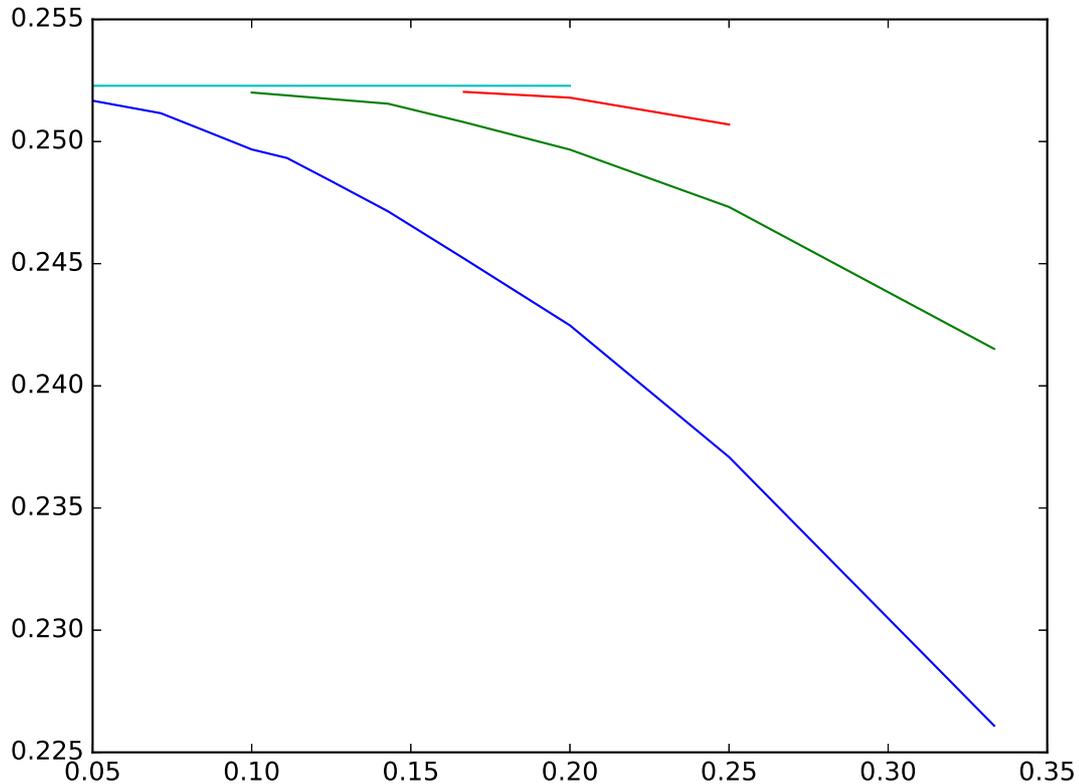}
  \caption{SDE example. Plot of $\hat{\xi}^{n,\nu}_T$ in function of~$1/n$ for $\nu=2$ (blue), $\nu=3$ (green) and $\nu=4$ (red). The value obtained with $\nu=5$ and $n=5$ is given by a cyan line. } \label{Convergence_EDS}
\end{figure}
\subsection{Numerical results for a PDMP}\label{Subsec_Num_PDMP}
We consider the TCP process with infinitesimal generator
$$ Lf(x)=f'(x)+ x(f(x/2)-f(x)),$$
starting from $X_0=1$, and our goal is to approximate $\mathbb{E}[X_T]$, with $T=1$. Since the jumps are only downward, the jump intensity $\lambda(x)$ is bounded by $X_0\times e$ on $[0,1]$. We are thus in the framework of paragraph~\ref{subsubsec_approx_PDMP}, and use the scheme described in~\eqref{scheme_PDMP}. We denote again by $\hat{\xi}^{n,\nu}_T$ the estimator of $\mathbb{E}[X_T]$ given by~\eqref{D5}, using the approximation of order $\nu$ with $n$. In Figure~\ref{Convergence_PDMP}, we have plotted the approximation of $\E[X_T]$ with $T=1$ with the orders $\nu\in \{2,3,4 \}$ in function  of~$1/n$.  The  half-width of the 95\% confidence interval is about $7\times 10^{-4}$. The approximation of order $\nu=5$ is already at this level of precision for $n=5$, and we have indicated this value as a reference line for the other schemes. The plot is very similar to the one obtained in Figure~\ref{Convergence_EDS}. This demonstrates numerically that the approximations described by~\eqref{D5} are relevant for a wide range of processes and applications. 
\begin{figure}[h]
  \centering
  \includegraphics[width=\linewidth]{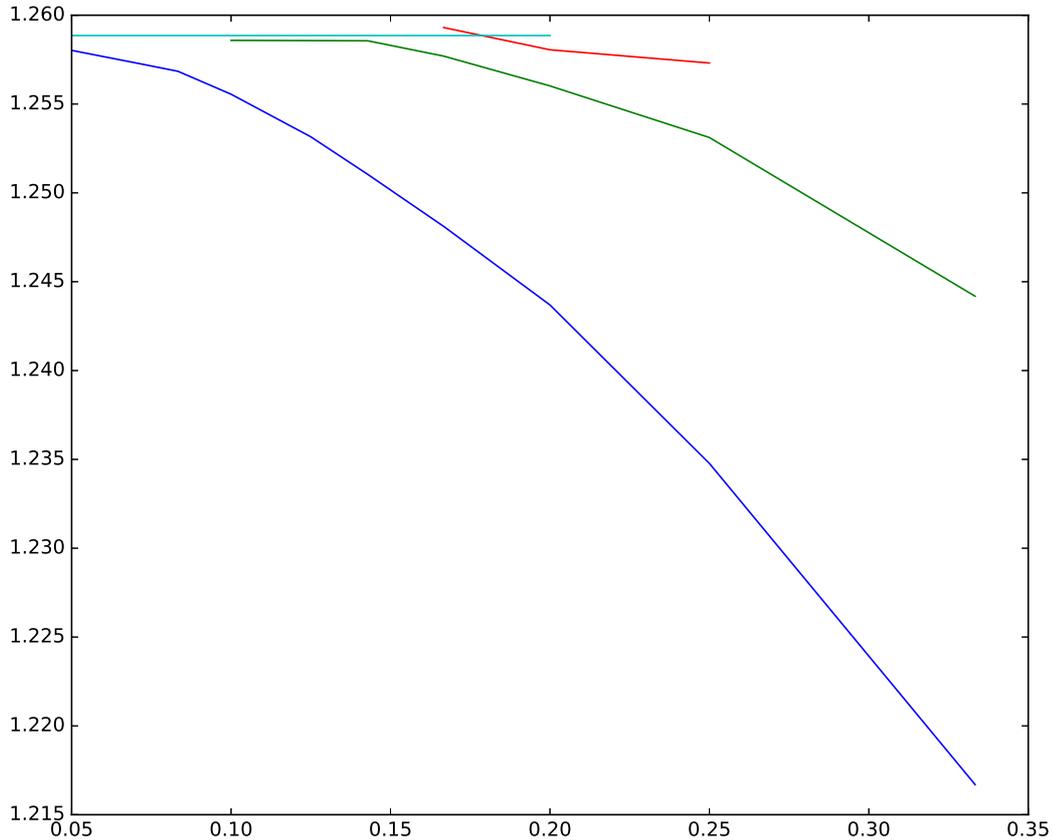}
  \caption{PDMP example. Plot of $\hat{\xi}^{n,\nu}_T$ in function of~$1/n$ for $\nu=2$ (blue), $\nu=3$ (green) and $\nu=4$ (red). The value obtained with $\nu=5$ and $n=5$ is given by a cyan line. } \label{Convergence_PDMP}
\end{figure}

\subsection{A rough complexity analysis}

Now, let us do a rough complexity analysis to understand which order of approximation to use in practice. To make this derivation, we make the assumption for sake of simplicity that the variance corresponding to the term $c(\mathcal{A})\Gamma^{\mathcal{A}}_0$ is equal to~$1$ for all $\mathcal{A}\in \mathbf{F}(\mathcal{T}^\nu_0)$, $\nu\ge 1$. Thus, in this analysis, we will use the same number of samples for all these terms.  We also suppose that we want to achieve a precision of order $\varepsilon>0$, with a standard error which is exactly~$\varepsilon$. Then, we have the following.
\begin{enumerate}
\item For the approximation of order~$1$, we use one Euler scheme with time step $n$ and the standard error is $1/\sqrt{N}$, where $N$ is the number of samples. We take $n=\varepsilon^{-1}$ and  $N=\varepsilon^{-1}$ and the calculation time (counted as the number of Euler iterations used) is $N\times n= \varepsilon^{-3}$.
\item For the approximation of order~$2$, we have two terms corresponding to $\mathcal{A}=\{\emptyset \}$ and $\mathcal{A}=\{\emptyset,1 \}$. The first one requires $n$ calculations of Euler iterations. The second one requires between $2n$ and $3n$ Euler iterations: due to the branching implementation, we only calculate $2n$ iterations when $\kappa(\emptyset)=n-1$ and $3n$ iterations when $\kappa(\emptyset)=0$. For simplicity, we will only consider in this computational cost analysis the worst case and count $3n$ iterations. Since the convergence is of order~$2$, we take $n=\varepsilon^{-1/2}$. The standard error is  $\sqrt{2/N}$, and we take $N=2\varepsilon^{-1}$.  Thus, the calculation time is $N\times(n+3n)=8\varepsilon^{-5/2}$.
\item For the order 3, we have in addition to calculate $\mathbb{E}[\Gamma^{\mathcal{A}}_0]$ for $\mathcal{A}=\{\emptyset,1, 11\}$ and $\mathcal{A}=\{\emptyset,1, 2\}$ that requires respectively $2n+3n=5n$ and $n+2\times 2n+3n=8n$ Euler iterations. We take $n=\varepsilon^{-1/3}$ to have an approximation of order $\varepsilon$. The standard error is  $\sqrt{4/N}$, and we take $N=4\varepsilon^{-1}$. Thus, the calculation time is $N\times (4n+5n+8n)=68\varepsilon^{-7/3}$.
\item  For the order 4, we have in addition to calculate $\mathbb{E}[\Gamma^{\mathcal{A}}_0]$ for $\mathcal{A}=\{\emptyset,1, 11, 111\}$ and $\mathcal{A}=\{\emptyset,1,11, 2\}$, $\mathcal{A}=\{\emptyset,1,2, 21\}$, $\mathcal{A}=\{\emptyset,1,11, 2, 21\}$ and $\mathcal{A}=\{\emptyset,1,2, 3\}$: they require respectively $7n$, $2n+2\times3n+4n=12n$ (see Figure~\ref{Fig_grids}), $12n$, $3n+2\times4n+5n=16n$ and $n+3\times 2n + 3\times 3n +4n=20n$. The overall cost is $17n+7n+2\times 12n+16n+20n=84n$. We then take $n=\varepsilon^{-1/4}$ and $N=9\varepsilon^{-1}$ to have a standard error $\varepsilon$. Thus, the calculation time is $84n\times N=756\varepsilon^{-9/4}$.  
\end{enumerate}
With this rough cost analysis, we would use:
\begin{itemize}
\item the approximation of order 2 rather than the approximation of order 1 if $8\varepsilon^{-5/2}<\varepsilon^{-3}$, i.e. $\varepsilon<1/64$,
\item the approximation of order 3 rather than the approximation of order 2 if $68\varepsilon^{-7/3}<8\varepsilon^{-5/2}$, i.e. $\varepsilon<(8/68)^6\approx 2.6\times 10^{-6}$,
\item the approximation of order 4 rather than the approximation of order 3 if $756\varepsilon^{-9/4}<68\varepsilon^{-7/3}$, i.e. $\varepsilon<(68/756)^{12}\approx 2.8\times 10^{-13}$.
\end{itemize}
This analysis shows that in practice the order 3 may be already sufficient for the precision that is usually needed. However, this cost analysis has to be tempered, because the assumption of a unit variance for each term is rather pessimistic. For ODEs or SDEs with constant diffusion coefficient, we already know from our theoretical results (see Remark~\ref{Rk_variance_terms}) that the variance of $c(\mathcal{A})\Gamma^{\mathcal{A}}_0$ may be much smaller. Also, for SDEs, we see from Table~\ref{Table_variance} that, globally, the terms that are needed for the calculation of order 4 have a smaller variance than the one needed for the order 3, which have also smaller variance than the one needed for the order 2. Of course, there is exception: for example in Table~\ref{Table_variance}, the standard deviation associated to $\{ \emptyset, 1,11, 111 \}$ is of same magnitude as the one associated to $\{\emptyset, 1, 11 \}$ or even $\{\emptyset, 1 \}$. This is why it is better in practice to estimate first the variance of each term and then determine how many samples are needed to achieve a given precision. For the example of Figure~\ref{Convergence_EDS}, to get a precision of $\varepsilon=2\times 10^{-4}$, the approximation of order 2 has required 88s ($n=30$), the order 3 about 89s ($n=10$), the order 4 about  214s ($n=6$) and the order 5 about 345s ($n=5$). Thus, the scheme of order~3 is already competitive for this precision with respect to the order~2.

\begin{table}[h]
  \begin{center}
    \begin{tabular}[h]{|l|c|c| }
      \hline
      $\mathcal{A}$ & Standard deviation of  $c(\mathcal{A})\Gamma^{\mathcal{A}}_0$ & Used for approx of order     \\
      \hline
      $\{\emptyset \}$&  $8.8 \times 10^{-2}$ &  $\nu\ge 1$  \\
      \hline
      $\{\emptyset, 1 \}$  & $3.2 \times 10^{-2}$ & $\nu \ge 2$  \\
      \hline
      $\{\emptyset, 1, 11 \}$  & $1.4 \times 10^{-2}$  & $\nu \ge 3$  \\
      $\{\emptyset, 1, 2 \}$  &  $4.4 \times 10^{-3}$ & $\nu \ge 3$  \\     
      \hline
      $\{\emptyset, 1, 11, 111 \}$  & $1.0 \times 10^{-2}$  & $\nu \ge 4$  \\
      $\{\emptyset, 1, 11, 2 \}$  &  $1.7 \times 10^{-3}$ & $\nu \ge 4$  \\
      $\{\emptyset, 1, 2, 21 \}$  &  $1.5 \times 10^{-3}$ & $\nu \ge 4$  \\
      $\{\emptyset, 1, 11, 2, 21 \}$  & $5.4 \times 10^{-4}$  & $\nu \ge 4$  \\
      $\{\emptyset, 1, 2, 3 \}$  &  $3.7 \times 10^{-4}$ & $\nu \ge 4$  \\
      \hline
   \end{tabular}
   
   \caption{Empirical standard deviation of $c(\mathcal{A})\Gamma^{\mathcal{A}}_0$ for $f(x)=x^2$ and $n=5$, on the SDE example described in Subsection~\ref{Subsec_Num_SDE}.}\label{Table_variance}
 \end{center}
\end{table}

\appendix

\section{Technical results for the variance analysis}

We introduce some notation. We consider smooth random fields, that is
functions $\varphi :\Omega \times \R^{d}\rightarrow \R^{d}$ which are
measurable with respect to $(\omega ,x)$ and such that, for each $\omega ,$
the function $x\mapsto \varphi (\omega ,x)$ is of class $C^{\infty
}(\R^{d})$. For such a random field we denote
\begin{eqnarray}
\left\Vert \varphi \right\Vert _{0,p,\infty } &=&\sup_{x}\left\Vert \varphi
(x)\right\Vert _{p}=\sup_{x}(\int \left\vert \varphi (\omega ,x)\right\vert
^{p} d\Px(\omega ))^{1/p},  \label{APP1a} \\
\left\Vert \varphi \right\Vert _{q,p,\infty } &=&\sum_{\left\vert \alpha
\right\vert \leq q}\left\Vert \partial ^{\alpha }\varphi \right\Vert
_{0,p,\infty }.  \label{APP1b}
\end{eqnarray}%
Moreover, we will say that a sequence of random fields $\varphi
_{i},i=1,...,m$ are independent if there are some independent $\sigma -$%
algebras $\mathcal{G}_{i},i=1,...,m$ such that $\varphi _{i}$ is $\mathcal{G}_{i}\otimes \mathcal{B}(\R^{d})$ measurable. We will use this property as follows. Suppose that $\Phi$ is $\mathcal{G}_{m}\otimes \mathcal{B}(\R^{d})$ measurable and $\Psi $ and $\Theta$ are $\vee _{i=1}^{m-1}\mathcal{G}_{i}\otimes \mathcal{B}(\R^{d})$ measurable. Then, for every $x\in \R^{d}$ and
every $p\geq 1$%
\begin{eqnarray}
\E(\left\vert \Phi (\omega ,\Psi (\omega ,x))\right\vert ^{p} |\Theta (\omega
,x)|) &=&\E\left(|\Theta (\omega ,x)|  \E\left( \left\vert \Phi (\omega ,\Psi (\omega ,x))\right\vert ^{p} \bigg| \vee _{i=1}^{m-1}\mathcal{G}_{i} \right)\right) \notag \\
&\leq &\left\Vert \Phi \right\Vert _{0,p,\infty }^{p}\E(\left\vert \Theta
(\omega ,x)\right\vert ).   \label{APP1e}
\end{eqnarray}

In the sequel we consider a sequence of smooth random fields $\Phi
_{i}:\Omega \times \R^{d}\rightarrow \R^{d}$ and $\phi_{i}:\Omega \times \R^{d}\rightarrow \R^{d}$ , $i\in \N$ and moreover, a vector field $\varphi:\Omega \times \R^{d}\rightarrow \R^{d}$. We assume that $\varphi $ and $(\Phi_{j},\phi _{j}),j\in \N$ are independent. We fix $r\in \N$ and, for a set $
\Lambda \subset \{1,...,r\},$ we define
\begin{eqnarray*}
\theta_{i}^{\Lambda } &=&1_{\Lambda }(i)\phi _{i}+1_{\Lambda ^{c}}(i)\Phi
_{i},\quad i=1,...,r\quad and \\
\theta _{(r)}^{\Lambda } &=&\theta _{r}^{\Lambda }\circ ....\circ \theta
_{1}^{\Lambda }
\end{eqnarray*}%
Moreover, given a multi-index $\alpha ,$ we define%
\begin{equation}
\Gamma _{r}^{\alpha }\varphi (x)=\sum_{\Lambda \subset
\{1,...,r\}}(-1)^{\left\vert \Lambda \right\vert }\partial _{x}^{\alpha
}[\varphi (\theta _{(r)}^{\Lambda })](x)  \label{APP1d}
\end{equation}

\begin{proposition}
\label{appendix}Suppose that for every $p,q\in \N$, there exists $C_{q,p}$ such that
\begin{equation}
\forall i\in \{1,...,r\}, \ \left\Vert \Phi _{i}\right\Vert _{q,p,\infty }+\left\Vert \phi
_{i}\right\Vert _{q,p,\infty }+\left\Vert \varphi \right\Vert _{q+1,p,\infty
}\leq C_{q,p}<\infty .  \label{APP1c}
\end{equation}%
Then, for  every $p\geq 1$ and every multi-index $\alpha $ we have
\begin{equation}
\left\Vert \Gamma _{r}^{\alpha }\varphi \right\Vert _{0,p,\infty }\leq
C\times \prod_{i=1}^{r}\left\Vert \Phi _{i}-\phi _{i}\right\Vert
_{\left\vert \alpha \right\vert +r,2 p,\infty }  \label{APP2}
\end{equation}%
for some $C$ depending on $r$, $\left\vert \alpha \right\vert $ and $C_{\left\vert \alpha \right\vert +r,2|\alpha| p}.$
\end{proposition}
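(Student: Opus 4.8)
The plan is to prove by induction on $r$ the following more flexible statement: for every $p\ge 1$ and every multi-index $\alpha$, the quantity $\left\Vert \Gamma_r^\alpha\varphi\right\Vert_{0,p,\infty}$ is bounded by $C\prod_{i=1}^r\left\Vert\Phi_i-\phi_i\right\Vert_{|\alpha|+r,2p,\infty}$, where $C$ depends only on $r$, $|\alpha|$ and the constants $C_{|\alpha|+r,2|\alpha|p}$ from \eqref{APP1c}. First I would set up the key algebraic decomposition: writing $\Delta_r=\Phi_r-\phi_r$ and separating in the sum $\sum_{\Lambda\subset\{1,\dots,r\}}$ the contribution of whether $r\in\Lambda$ or not, one gets
\begin{equation*}
\Gamma_r^\alpha\varphi(x)=\sum_{\Lambda\subset\{1,\dots,r-1\}}(-1)^{|\Lambda|}\partial_x^\alpha\big[(\varphi(\Phi_r)-\varphi(\phi_r))(\theta_{(r-1)}^\Lambda)\big](x).
\end{equation*}
The bracketed random field $\psi:=\varphi\circ\Phi_r-\varphi\circ\phi_r$ is $\mathcal{G}_r\otimes\mathcal{B}(\R^d)$-measurable and independent of the $\theta_{(r-1)}^\Lambda$, and by a first-order Taylor expansion $\varphi(\Phi_r(y))-\varphi(\phi_r(y))=\int_0^1\nabla\varphi(\phi_r(y)+t(\Phi_r(y)-\phi_r(y)))\,dt\cdot(\Phi_r(y)-\phi_r(y))$, so that using the Faà di Bruno formula \eqref{APP3}--\eqref{APP4} together with \eqref{APP1c} one shows $\left\Vert\psi\right\Vert_{q,p,\infty}\le C_{q,p}'\left\Vert\Phi_r-\phi_r\right\Vert_{q,2p,\infty}$ for every $q,p$, the factor $2p$ appearing from a Cauchy--Schwarz split between the $\nabla\varphi$-type factors (which have all moments bounded) and the $\Phi_r-\phi_r$ factor.

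Next I would observe that, since $\psi$ vanishes when $\Phi_r=\phi_r$, we are exactly in the position of applying the induction hypothesis to the family $(\Phi_i,\phi_i)_{i=1,\dots,r-1}$ but with the \emph{outer} field $\varphi$ replaced by $\psi$. Here one must be a little careful: $\psi$ is a random field depending on $\omega$, so the cleanest route is to condition on $\mathcal{G}_r$ and apply the inductive bound $\omega$-wise. Concretely,
\begin{equation*}
\Gamma_{r-1}^\alpha(\psi(\omega,\cdot))(x)=\sum_{\Lambda\subset\{1,\dots,r-1\}}(-1)^{|\Lambda|}\partial_x^\alpha\big[\psi(\omega,\theta_{(r-1)}^\Lambda)\big](x)=\Gamma_r^\alpha\varphi(x),
\end{equation*}
and conditionally on $\mathcal{G}_r$ the field $\psi(\omega,\cdot)$ plays the role of the deterministic outer field in the rank $r-1$ statement, while $(\Phi_i,\phi_i)_{i\le r-1}$ remain independent of it. The induction gives, conditionally, a bound of the form $C\prod_{i=1}^{r-1}\left\Vert\Phi_i-\phi_i\right\Vert_{|\alpha|+r-1,2p,\infty}$ times a seminorm of $\psi(\omega,\cdot)$ of order $|\alpha|+r-1$ — but this last seminorm is itself random, so one takes $L^p$-norms in $\omega$ and, using $|\alpha|+r-1\le|\alpha|+r$, ends up with $\left\Vert\psi\right\Vert_{|\alpha|+r-1,p,\infty}\le C\left\Vert\Phi_r-\phi_r\right\Vert_{|\alpha|+r,2p,\infty}$ by the estimate from the first paragraph. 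Multiplying the factors completes the induction; the base case $r=0$ (or $r=1$) is just the Taylor estimate on $\psi$ itself composed with nothing (resp. with the single flow), handled by the same Faà di Bruno bookkeeping and \eqref{APP1e}.

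The main obstacle I anticipate is the bookkeeping needed to make the conditioning-and-induction step fully rigorous: one has to track that differentiating $\psi(\omega,\theta_{(r-1)}^\Lambda(\omega,x))$ in $x$ produces, via Faà di Bruno, products of $\partial^\beta\psi$ evaluated at $\theta_{(r-1)}^\Lambda$ times polynomials in the derivatives of the $\theta_{(r-1)}^\Lambda$, and that the latter derivatives have all moments bounded uniformly in $\Lambda$ (this follows from \eqref{APP1c} and the chain rule, since each $\theta_i^\Lambda$ is either $\Phi_i$ or $\phi_i$). A secondary technical point is controlling how the integrability exponent degrades: each use of Cauchy--Schwarz or Hölder to peel off a ``good'' factor from a ``$\Phi_i-\phi_i$'' factor costs a factor $2$ in the exponent, so one has to organise the estimates so that a single such split suffices per level, which is why the statement is phrased with $2p$ and with constants depending on $C_{|\alpha|+r,2|\alpha|p}$ rather than on higher exponents. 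Once these moment bounds and the exponent accounting are in place, the telescoping structure of \eqref{APP1d} does all the real work and the product bound \eqref{APP2} drops out.
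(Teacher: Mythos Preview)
Your approach is genuinely different from the paper's: you peel off the \emph{outermost} layer (splitting on $r\in\Lambda$ and introducing $\psi=\varphi\circ\Phi_r-\varphi\circ\phi_r$), whereas the paper peels off the \emph{innermost} layer, splitting on $1\in\Lambda$. In the paper's reduction one writes
\[
\Gamma_r^\alpha\varphi
=\sum_{|\beta|\le|\alpha|}\Big(\Gamma_{r-1}^\beta\varphi(\Phi_1)P_{\alpha,\beta}(\Phi_1)-\Gamma_{r-1}^\beta\varphi(\phi_1)P_{\alpha,\beta}(\phi_1)\Big),
\]
where $\Gamma_{r-1}^\beta\varphi$ is built from $(\Phi_i,\phi_i)_{i\ge2}$ and the \emph{same} outer field $\varphi$. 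One then splits into $A_\beta+B_\beta$ exactly as in the $r=1$ case, uses \eqref{APP1e}, and applies the induction hypothesis directly to $\|\Gamma_{r-1}^\beta\varphi\|_{0,p,\infty}$ with unchanged constants.

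Your route can also be made to work, but as written it has a gap. When you apply the rank~$r-1$ bound with $\psi$ in place of $\varphi$, the constant $C$ in \eqref{APP2} depends on the bounds $C_{q,p}$ in \eqref{APP1c}, not linearly on $\|\varphi\|_{q+1,p,\infty}$. So applying the induction hypothesis to $\psi$ just absorbs $\psi$ into the constant and you lose the factor $\|\Phi_r-\phi_r\|$. Your fix --- condition on $\mathcal G_r$, obtain a bound ``times a seminorm of $\psi(\omega,\cdot)$'', then take $L^p$ in $\omega$ --- presupposes exactly this linear dependence. To make your argument rigorous you must strengthen the induction hypothesis to something like
\[
\|\Gamma_r^\alpha\varphi\|_{0,p,\infty}\le C(r,|\alpha|,(\|\Phi_i\|,\|\phi_i\|)_i)\,\|\varphi\|_{|\alpha|+r,p',\infty}\prod_{i=1}^r\|\Phi_i-\phi_i\|_{|\alpha|+r,2p,\infty},
\]
and carry this through the recursion; then $\|\psi\|_{q,p,\infty}\le C\|\Phi_r-\phi_r\|_{q,2p,\infty}$ does the job. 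The paper's inside-out peeling avoids this complication because the outer field never changes.
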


\begin{remark} This proposition says the following: if at each step the error is of
order $\delta _{i}=\left\Vert \Phi _{i}-\phi _{i}\right\Vert _{q,p^{\prime
},\infty }$, then after $r$ steps we have an error of order $\delta
_{1}\times ...\times \delta _{r}.$ This may seem a little surprising, and one may have
expected an error of order $\delta _{1}+...+\delta _{r}$, but this is
due to the way how terms are summed with $\sum_{\Lambda \subset \{1,...,r\}}(-1)^{\left\vert \Lambda
\right\vert }.$
\end{remark}

\begin{proof}
  \textbf{Step 1}. We use the Faà di Bruno formula $\partial ^{\alpha }[f\circ g]=\sum_{\left\vert \beta \right\vert \leq \left\vert \alpha \right\vert }(\partial ^{\beta }f)(g)P_{\alpha ,\beta }(g)$ (see~\eqref{APP3}) and the inequality between geometric and arithmetic means to upper bound the terms $\vert\prod_{i=1}^{k}\partial ^{\gamma _{i}} g^{j_i} \vert$ defining $P_{\alpha ,\beta }(g)$. We then obtain for random functions~$g$
\begin{equation*}
\left\Vert P_{\alpha ,\beta }(g)\right\Vert _{0,p,\infty }^p\leq
C \left\Vert g\right\Vert _{\left\vert \alpha \right\vert ,|\alpha| p,\infty }^{|\alpha| p}.
\end{equation*}%
Besides, for two random fields $g_{1}$ and $g_{2}$, we write
$$ \prod_{i=1}^{k}\partial ^{\gamma _{i}}g_1^{j_i}-\prod_{i=1}^{k}\partial ^{\gamma _{i}}g_2^{j_i}=\sum_{i'=1}^k \left(\prod_{i<i'}\partial ^{\gamma _{i}}g_2^{j_i} \right) (\partial ^{\gamma _{i'}}g_1^{j_{i'}}-\partial ^{\gamma _{i'}}g_2^{j_{i'}} )  \left(\prod_{i>i'}\partial ^{\gamma _{i}}g_1^{j_i}\right). $$
Using the inequality between geometric and arithmetic means for the product on $i\not=j$ and then the Cauchy-Schwarz inequality, we get
\begin{equation}
\left\Vert P_{\alpha ,\beta }(g_{1})-P_{\alpha ,\beta }(g_{2})\right\Vert
_{0,p,\infty }^p\leq C(\left\Vert g_{1}\right\Vert _{\left\vert \alpha
\right\vert ,2(|\alpha|-1)p,\infty }+\left\Vert
g_{2}\right\Vert _{\left\vert \alpha \right\vert ,2(|\alpha|-1)p,\infty })^{(|\alpha|-1)p} \left\Vert
g_{1}-g_{2}\right\Vert^p_{\left\vert \alpha \right\vert ,2 p,\infty }.  \label{APP4a}
\end{equation}

\textbf{Step 2.} We prove (\ref{APP2}) for $r=1.$ In this case $\Lambda
=\varnothing $ or $\Lambda =\{1\}$ so that%
\begin{eqnarray*}
\Gamma _{1}^{\alpha }\varphi (x) &=&\partial ^{\alpha }[\varphi (\Phi
_{1})](x)-\partial ^{\alpha }[\varphi (\phi _{1})](x) \\
&=&\sum_{\left\vert \beta \right\vert \leq \left\vert \alpha \right\vert
}(\partial ^{\beta }\varphi )(\Phi _{1})P_{\alpha ,\beta }(\Phi
_{1})-(\partial ^{\beta }\varphi )(\phi _{1})P_{\alpha ,\beta }(\phi _{1}) \\
&=&\sum_{\left\vert \beta \right\vert \leq \left\vert \alpha \right\vert
}A_{\beta }+B_{\beta }
\end{eqnarray*}%
with 
\begin{eqnarray*}
A_{\beta } &=&((\partial ^{\beta }\varphi )(\Phi _{1})-(\partial ^{\beta
}\varphi )(\phi _{1}))P_{\alpha ,\beta }(\Phi _{1}) \\
&=&P_{\alpha ,\beta }(\Phi _{1})\int_{0}^{1}\left\langle \nabla (\partial
^{\beta }\varphi )(\lambda \Phi _{1}+(1-\lambda )\phi _{1}),\Phi _{1}-\phi
_{1}\right\rangle d\lambda
\end{eqnarray*}%
and 
\begin{equation*}
B_{\beta }=(\partial _{\beta }\varphi )(\phi _{1})(P_{\alpha ,\beta }(\Phi
_{1})-P_{\alpha ,\beta }(\phi _{1})).
\end{equation*}%
Using (\ref{APP1c}) and the fact that $\varphi $ is independent of $\lambda
\Phi _{1}+(1-\lambda )\phi _{1}$ we get (see (\ref{APP1e}))%
\begin{eqnarray*}
\left\Vert \left\langle \nabla (\partial ^{\beta }\varphi )(\lambda \Phi
_{1}+(1-\lambda )\phi _{1}),\Phi _{1}-\phi _{1}\right\rangle \right\Vert
_{p} &\leq &\left\Vert \varphi \right\Vert _{\left\vert \beta \right\vert
+1,p,\infty }\left\Vert \Phi _{1}-\phi _{1}\right\Vert _{0,p,\infty} \\
&\leq &C_{\left\vert \alpha \right\vert ,p}\left\Vert \Phi _{1}-\phi
_{1}\right\Vert _{0,p,\infty}
\end{eqnarray*}%
so that $\left\Vert A_{\beta }\right\Vert _{0,p,\infty }\leq C\left\Vert
\Phi _{1}-\phi _{1}\right\Vert _{0,p,\infty }.$ Moreover, using again (\ref{APP1e}%
) first and then (\ref{APP4a}), we get%
\begin{equation*}
\left\Vert B_{\beta }\right\Vert _{0,p,\infty }\leq \left\Vert \varphi
\right\Vert _{\left\vert \beta \right\vert ,p,\infty }\left\Vert P_{\alpha
,\beta }(\Phi _{1})-P_{\alpha ,\beta }(\phi _{1})\right\Vert _{0,p,\infty
}\leq C\left\Vert \Phi _{1}-\phi _{1}\right\Vert _{|\alpha|,2p,\infty }
\end{equation*}%
so (\ref{APP2}) is proved for $r=1$.

\textbf{Step 3. }Suppose (\ref{APP2}) is true for $r-1,$ for every $\alpha $
and every $p\geq 1.$ We prove it for $r.$ We do it first for $\alpha
=\varnothing $ (without derivatives) because it is simpler. We write%
\begin{eqnarray*}
\Gamma _{r}^{\varnothing }\varphi &=&\sum_{\Lambda \subset
\{1,...,r\}}(-1)^{\left\vert \Lambda \right\vert }\varphi (\theta
_{(r)}^{\Lambda }) \\
&=&\sum_{\Lambda ^{\prime }\subset \{2,...,r\}}(-1)^{\left\vert \Lambda
^{\prime }\right\vert }(\varphi (\theta _{(r-1)}^{\Lambda ^{\prime }})(\Phi
_{1})-\varphi (\theta _{(r-1)}^{\Lambda ^{\prime }})(\phi _{1})) \\
&=&\sum_{i=1}^{d}(\Phi _{1}^{i}-\phi _{1}^{i})\int_{0}^{1}\sum_{\Lambda
^{\prime }\subset \{2,...,r\}}(-1)^{\left\vert \Lambda ^{\prime }\right\vert
}\partial ^{i}[\varphi (\theta _{(r-1)}^{\Lambda ^{\prime }})](\lambda \Phi
_{1}+(1-\lambda )\phi _{1})d\lambda \\
&=&\sum_{i=1}^{d}(\Phi _{1}^{i}-\phi _{1}^{i})\int_{0}^{1}\Gamma
_{r-1}^{(i)}\varphi (\lambda \Phi _{1}+(1-\lambda )\phi _{1})d\lambda.
\end{eqnarray*}%
Note that we have made a slight abuse of notation here: 
the notation $\theta _{(r-1)}^{\Lambda ^{\prime }}$ is used in fact for $\theta^{\Lambda'}_r \circ \dots \circ \theta^{\Lambda'}_2$, not for $\theta^{\Lambda'}_{r-1} \circ \dots \circ \theta^{\Lambda'}_1$.
Since $(\lambda \Phi _{1}+(1-\lambda )\phi _{1})(x)$ is independent of $%
\Gamma _{r-1}^{(i)}\varphi ,$ we have from~\eqref{APP1e} 
\begin{eqnarray*}
\left\Vert \Gamma _{r}^{\varnothing }\varphi (x)\right\Vert _{p} &\leq
&d \left\Vert \Phi _{1}-\phi _{1}\right\Vert _{0,p,\infty}\times \left\Vert \Gamma
_{r-1}^{(i)}\varphi \right\Vert _{0,p,\infty }.
\end{eqnarray*}
Then, by using the induction hypothesis, we get
\begin{eqnarray*}
  \left\Vert \Gamma _{r}^{\varnothing }\varphi (x)\right\Vert _{p} &\leq & C\left\Vert \Phi _{1}-\phi _{1}\right\Vert _{0,p,\infty}\times
\prod_{j=2}^{r}\left\Vert \Phi _{j}-\phi _{j}\right\Vert
_{1+r-1,2 p,\infty } \\
&\le&C\prod_{j=1}^{r}\left\Vert \Phi _{j}-\phi _{j}\right\Vert
_{r,2p,\infty }.
\end{eqnarray*}%

We prove now (\ref{APP2}) for a general multi-index $\alpha$ and make the same abuse of notation for $\theta^{\Lambda'}$. Using (\ref{APP3}) and (\ref{APP4}) for $f=\varphi (\theta _{(r-1)}^{\Lambda ^{\prime}})$ and $g_{1}=\Phi _{1},g_{2}=\phi _{1}$ we obtain%
\begin{eqnarray*}
\Gamma _{r}^{\alpha }\varphi &=&\sum_{\Lambda \subset
\{1,...,r\}}(-1)^{\left\vert \Lambda \right\vert }\partial _{x}^{\alpha
}[\varphi (\theta _{(r)}^{\Lambda })] \\
&=&\sum_{\Lambda ^{\prime }\subset \{2,...,r\}}(-1)^{\left\vert \Lambda
^{\prime }\right\vert }\sum_{\left\vert \beta \right\vert \leq \left\vert
\alpha \right\vert }\partial ^{\beta }[\varphi (\theta _{(r-1)}^{\Lambda
^{\prime }})](\Phi _{1})P_{\alpha ,\beta }(\Phi _{1}) \\
&&-\sum_{\Lambda ^{\prime }\subset \{2,...,r\}}(-1)^{\left\vert \Lambda
^{\prime }\right\vert }\sum_{\left\vert \beta \right\vert \leq \left\vert
\alpha \right\vert }\partial ^{\beta }[\varphi (\theta _{(r-1)}^{\Lambda
^{\prime }})](\phi _{1})P_{\alpha ,\beta }(\phi _{1}) \\
&=&\sum_{\left\vert \beta \right\vert \leq \left\vert \alpha \right\vert
}\Gamma _{r-1}^{\beta }\varphi (\Phi _{1})P_{\alpha ,\beta }(\Phi
_{1})-\Gamma _{r-1}^{\beta }\varphi (\phi _{1})P_{\alpha ,\beta }(\phi _{1})
\\
&=&\sum_{\left\vert \beta \right\vert \leq \left\vert \alpha \right\vert
}A_{\beta }+B_{\beta }
\end{eqnarray*}%
with 
\begin{equation*}
A_{\beta }=(\Gamma _{r-1}^{\beta }\varphi (\Phi _{1})-\Gamma _{r-1}^{\beta
}\varphi (\phi _{1}))P_{\alpha ,\beta }(\Phi _{1})
\end{equation*}%
and 
\begin{equation*}
B_{\beta }=\Gamma _{r-1}^{\beta }\varphi (\Phi _{1})(P_{\alpha ,\beta }(\Phi
_{1})-P_{\alpha ,\beta }(\phi _{1})).
\end{equation*}%
By assumption $\theta_2,\dots,\theta_r$ are independent of $(\Phi_{1},\phi _{1})$. Therefore, $\Gamma _{r-1}^{\beta }\varphi (x)$ is independent of $(\Phi_{1},\phi _{1})$. We use~\eqref{APP1e} first and then the induction hypothesis and~\eqref{APP4a} to obtain%
\begin{eqnarray*}
\left\Vert B_{\beta }\right\Vert _{0,p,\infty } &\leq &\left\Vert \Gamma
_{r-1}^{\beta }\varphi \right\Vert _{0,p,\infty }\left\Vert P_{\alpha ,\beta
}(\Phi _{1})-P_{\alpha ,\beta }(\phi _{1})\right\Vert _{0,p,\infty } \\
&\leq &C    \left\Vert \Phi _{1}-\phi _{1}\right\Vert
_{|\alpha|,2 p,\infty }  \prod_{i=2}^{r}\left\Vert \Phi_{i}-\phi _{i}\right\Vert
_{\left\vert \beta \right\vert +r-1,2 p,\infty} \\
 &\leq &C \prod_{i=1}^{r}\left\Vert \Phi_{i}-\phi _{i}\right\Vert
_{\left\vert \alpha \right\vert +r,2 p,\infty} .
\end{eqnarray*}%
Moreover%
\begin{equation*}
A_{\beta }=P_{\alpha ,\beta }(\Phi _{1})\int_{0}^{1}\left\langle (\nabla
\Gamma _{r-1}^{\beta }\varphi )(\lambda \Phi _{1}+(1-\lambda )\phi
_{1}),\Phi _{1}-\phi _{1}\right\rangle d\lambda .
\end{equation*}%
Notice that $\partial ^{i}\Gamma _{r-1}^{\beta }\varphi =\Gamma
_{r-1}^{(\beta ,i)}\varphi$. Using again~\eqref{APP1e} and the recurrence hypothesis, we get
\begin{equation*}
\left\Vert A_{\beta }\right\Vert _{0,p,\infty }\leq
C\prod_{i=1}^{r}\left\Vert \Phi _{i}-\phi _{i}\right\Vert _{\left\vert \beta
\right\vert +1+r-1,2p,\infty }\le C \prod_{i=1}^{r}\left\Vert \Phi_{i}-\phi _{i}\right\Vert
_{\left\vert \alpha \right\vert +r,2 p,\infty} .\qedhere
\end{equation*}%
\end{proof}

\bigskip
\begin{lemma}\label{lemma_flow_der}
  Let $(X_t(x))_{t\ge0} $ denote the flow of the SDE~\eqref{i1} and $\hat{X}_t(x)=x+b(x)t+\sigma(x)W_t$ the flow of the Euler scheme. We assume that $b$ and $\sigma$ are $C^\infty$, bounded and with bounded derivatives. Then, we have
  $$ \forall p,q\in \N, \exists C_{p,q}, \|\hat{X}_t-X_t\|_{q,p,\infty}\le C_{p,q} t^a,$$
 with $a=2$ if $\sigma=0$, $a=3/2$ if $\sigma(x)$ is a constant function and $a=1$ in the general case. 
\end{lemma}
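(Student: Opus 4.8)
The plan is as follows. Fix $p\in\N$ and a multi-index $\alpha$ with $|\alpha|=q$; I want to prove $\sup_x\|\partial^\alpha(\hat X_t-X_t)(x)\|_p\le C_{p,q}t^a$ and then sum over $|\alpha|\le q$ to get the bound on $\|\hat X_t-X_t\|_{q,p,\infty}$ defined in \eqref{APP1b}. The key point is that $\partial^\alpha(\hat X_t-X_t)$ can be written as the sum of an It\^o integral and a Lebesgue integral whose integrands are \emph{increments} of smooth functionals of the flow; after that it only remains to count powers of $t$, and the loss of the stochastic part when $\sigma$ is constant or zero is exactly what produces the exponents $3/2$ and $2$.

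\textbf{Step 1 (preliminary estimates).} By standard results on stochastic flows (as recalled in Example~\ref{Example_Euler}), for every $p,q$ one has $\sup_{x\in\R^d,\,t\le T}\sum_{|\alpha|\le q}(\E|\partial^\alpha X_t(x)|^p)^{1/p}<\infty$, and likewise for $\hat X_t$. Differentiating \eqref{i1}, the process $\partial^\alpha X_t$ solves a linear SDE with coefficients $\nabla\sigma(X_t),\nabla b(X_t)$ and an inhomogeneous term polynomial in the $\partial^\beta X_t$ with $|\beta|<|\alpha|$ and in the derivatives of $\sigma,b$ at $X_t$; using the above moment bounds together with Burkholder--Davis--Gundy and the boundedness of $\sigma,b$ and their derivatives, one obtains the increment estimates
\[
\sup_{x}\|\partial^\alpha X_s(x)-\partial^\alpha x\|_p\le C_{p,q}\,s^{b},\qquad |\alpha|\le q,
\]
where $\partial^\alpha x$ denotes the initial value ($x$ if $\alpha=\emptyset$, $e_{\alpha_1}$ if $|\alpha|=1$, $0$ if $|\alpha|\ge2$) and $b=1$ if $\sigma\equiv0$, $b=1/2$ otherwise (for $\sigma$ constant one even gets $b=1$ when $|\alpha|\ge1$, but $b=1/2$ — which still holds since $\|X_s-x\|_p\le Cs^{1/2}$ — suffices below). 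The same bounds hold for $\hat X$.

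\textbf{Step 2 (decomposition of the difference).} For $|\alpha|=q$, differentiating the integral equations for $X_t$ and $\hat X_t$ and applying the Fa\`a di Bruno formula \eqref{APP3}--\eqref{APP4} gives
\[
\partial^\alpha(\hat X_t-X_t)(x)=\int_0^t \Delta^\alpha_\sigma(s)\,dW_s+\int_0^t \Delta^\alpha_b(s)\,ds,
\]
where $\Delta^\alpha_\sigma(s)$ is the difference between the Fa\`a di Bruno polynomial expressing $\partial^\alpha[\sigma(X_s)]$ in terms of the $(\partial^\gamma\sigma)(X_s)$ and $(\partial^\beta X_s)_{|\beta|\le q}$, and the same polynomial with $X_s$ replaced by $x$ and $\partial^\beta X_s$ by $\partial^\beta x$; the latter equals $(\partial^\alpha\sigma)(x)$ because at the initial values only the term of the expansion with all blocks of size one survives, and $(\partial^\alpha\sigma)(x)$ is precisely the coefficient appearing in $\partial^\alpha\hat X_t(x)=\partial^\alpha x+(\partial^\alpha b)(x)t+(\partial^\alpha\sigma)(x)W_t$ (for $|\alpha|\ge2$; for $|\alpha|=1$ the telescoping is the obvious one). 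The quantity $\Delta^\alpha_b(s)$ is defined likewise. By a telescoping argument, the Lipschitz bound $|\partial^\gamma\sigma(X_s)-\partial^\gamma\sigma(x)|\le C|X_s-x|$, H\"older's inequality and Step~1, each resulting term is a product of $L^{p'}$-bounded factors containing at least one increment of order $s^{b}$, so $\|\Delta^\alpha_\sigma(s)\|_p+\|\Delta^\alpha_b(s)\|_p\le C_{p,q}\,s^{b}$; note that no term here is expressed in terms of $\partial^\alpha(\hat X_s-X_s)$ itself, so no Gronwall argument is needed at this stage. Crucially, if $\sigma$ is constant (in particular if $\sigma\equiv0$) all derivatives of $\sigma$ vanish and $\Delta^\alpha_\sigma\equiv0$.

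\textbf{Step 3 (counting powers, and main obstacle).} Burkholder--Davis--Gundy and H\"older give $\|\int_0^t\Delta^\alpha_\sigma\,dW_s\|_p^p\le C\,t^{p/2-1}\int_0^t\E|\Delta^\alpha_\sigma(s)|^p ds\le C\,t^{p/2-1}t^{p/2+1}=Ct^p$ (using $b=1/2$), so this term is $O(t)$; and Minkowski's integral inequality gives $\|\int_0^t\Delta^\alpha_b\,ds\|_p\le\int_0^t\|\Delta^\alpha_b(s)\|_p ds\le C\int_0^t s^{b}ds=O(t^{b+1})$. In the general case the $O(t)$ term dominates, so $a=1$; when $\sigma$ is constant, $\Delta^\alpha_\sigma\equiv0$ and only the $O(t^{b+1})=O(t^{3/2})$ term remains; when $\sigma\equiv0$, again $\Delta^\alpha_\sigma\equiv0$ and, since $b=1$ here, the remaining term is $O(t^2)$. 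Summing over $|\alpha|\le q$ yields the lemma. The delicate part is the bookkeeping of Step~2: one must verify that $\partial^\alpha\hat X_t$ is exactly the ``frozen-at-time-$0$'' instance of the Fa\`a di Bruno expansion of $\partial^\alpha X_t$, so that after telescoping $\Delta^\alpha_\sigma,\Delta^\alpha_b$ reduce to (products of bounded factors with) genuine increments $\partial^\beta X_s-\partial^\beta x$ and $\partial^\gamma\sigma(X_s)-\partial^\gamma\sigma(x)$, to which the bounds of Step~1 apply; carrying the matrix/tensor structure in dimension $d$ and the three values of $b$ consistently through this computation is tedious but uses nothing beyond Burkholder--Davis--Gundy, Gronwall and Fa\`a di Bruno.
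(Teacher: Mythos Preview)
Your argument is correct and follows essentially the same strategy as the paper's proof: write $\partial^\alpha(\hat X_t-X_t)$ as a stochastic plus a Lebesgue integral whose integrands are increments of the Fa\`a di Bruno expansion of $\partial^\alpha[\sigma(X_s)]$ and $\partial^\alpha[b(X_s)]$, use the short-time bounds $\|\partial^\beta X_s-\partial^\beta x\|_p\le Cs^{b}$ (which the paper establishes by induction on $|\beta|$, and which you state in Step~1), and conclude via Burkholder--Davis--Gundy. The paper organises the induction on $q$ more explicitly (working in $d=1$, $b=0$, and separating the ``leading'' terms $X^{(q)}_t\sigma^{(1)}(X_t)$ and $(X^{(1)}_t)^q\sigma^{(q)}(X_t)$ from the remainder $A_t$), whereas you absorb this into the single telescoping of the Fa\`a di Bruno polynomial at time $s$ versus time $0$; this is a packaging difference, not a different idea.
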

\begin{proof}
  We show this result by induction on $q$. We only focus on the general case, the cases $\sigma=0$ or $\sigma(x)$ constant can be then easily deduced. For $q=0$, this result is stated for example in Proposition~1.2~\cite{AJK}. For simplicity of notation, we do the proof in dimension~$d=1$ with $b=0$. We note $\sigma^{(q)}$ the $q$-th derivative of~$\sigma$. For $q=1$, we have 
  $\hat{X}_t^{(1)}(x)=1+\sigma^{(1)}(x)W_t$ and \begin{equation}\label{estim_moments}
    X_t^{(1)}(x)=1+\int_0^tX_s^{(1)}(x)  \sigma^{(1)} (X_s(x)) dW_s .
  \end{equation}
  Since $\sigma^{(1)}$ is bounded, we have $\forall t>0, \sup_{x}\E[\sup_{s\in[0,t]} |X_s^{(1)}(x)|^p]<\infty$.  We write
  $$\hat{X}_t^{(1)}(x)- X_t^{(1)}(x)=\int_0^t (X_s^{(1)}(x)-1)  \sigma^{(1)} (X_s(x)) dW_s +\int_0^t  \sigma^{(1)} (X_s(x))-\sigma^{(1)}(x) dW_s. $$
  Since $\sigma^{(1)}$ is bounded and Lipschitz, we get by using the Burkholder-Davis-Gundy inequality and then Jensen inequality
  $$\E[|\hat{X}_t^{(1)}(x)- X_t^{(1)}(x)|^p\le C t^{p/2-1}\int_0^t \E[|X_s^{(1)}(x)-1|^p]+\E[|X_s(x)-x|^p]ds,$$
with a constant $C$ that does not depend on~$x$. We check then again with the BDG inequality that $\E[|X_s(x)-x|^p]\le Cs^{p/2}$ since $\sigma$ is bounded and  $\E[|X_s^{(1)}(x)-1|^p]\le Cs^{p/2}$ since $\sigma^{(1)}$ is bounded and~\eqref{estim_moments}. Thus, we have $\E[|\hat{X}_t^{(1)}(x)- X_t^{(1)}(x)|^p]\le C t^p$.

  We suppose now the result true for $q-1\in \N^*$ and that we have shown that \begin{equation}\label{estim_tempscourt}
    \E[|X_s^{(r)}(x)|^p]\le Cs^{p/2},\text{ for }2\le r\le q-1,
  \end{equation}
for each $p$, with a constant~$C$ that does not depend on $x$.   
 We have
  $\hat{X}_t^{(q)}(x)=\sigma^{(q)}(x)W_t,$
  and by the Faà di Bruno formula
  \begin{align*}
    d X^{(q)}_t(x)&=\sum_{m_1+\dots+ q m_q=q}c_{m_1,\dots,m_q} \prod_{k=1}^q( X^{(k)}_t(x))^{m_k}  \sigma^{(m_1+\dots+m_q)}(X_t(x))dW_t \\
  &=  X^{(q)}_t(x)  \sigma^{(1)}(X_t(x))dW_t +  (X^{(1)}_t(x))^q  \sigma^{(q)}(X_t(x))dW_t +A_tdW_t,
  \end{align*}
  with $A_t=\sum_{m_1+\dots+ q m_q=q, m_1\not = q,m_q\not=0 }c_{m_1,\dots,m_q} \prod_{k=1}^q( X^{(k)}_t(x))^{m_k}  \sigma^{(m_1+\dots+m_q)}(X_t(x))$. Note that in this sum is equal to $0$ for $q=2$ and otherwise there is at least one $k\in\{2,\dots,q-1\}$, such that $m_k\ge 1$. This gives $\E[|A_t|^p]\le Ct^{p/2}$ by using the induction hypothesis~\eqref{estim_tempscourt} and H\"older type inequalities. Since $X^{(q)}_0(x)=0$, $\sigma^{(1)}$ and $ \sigma^{(q)}$ are bounded and  $\sup_{x}\E[\sup_{s\in[0,t]} |X_s^{(1)}(x)|^p]<\infty$ for any $p$, we get  $\E[|X_t^{(q)}(x)|^p]\le Ct^{p/2}$ by using BDG and Gronwall inequalities. Therefore, $\tilde{A}_t=A_t+ X^{(q)}_t(x)  \sigma^{(1)}(X_t(x))$ also satisfies $\E[|\tilde{A}_t|^p]\le Ct^{p/2}$.

  We now repeat the same arguments as for $q=1$: from
  \begin{align*}
    \hat{X}_t^{(q)}(x)- X_t^{(q)}(x)=&\int_0^t ((X_s^{(1)}(x))^q-1)  \sigma^{(q)} (X_s(x)) dW_s +\int_0^t  \sigma^{(q)} (X_s(x))-\sigma^{(q)}(x) dW_s \\&  + \int_0^t \tilde{A}_tdW_t,
  \end{align*}
we get $\E[|\hat{X}_t^{(q)}(x)- X_t^{(q)}(x)|^p]\le C t^p$.

\end{proof}
  
\subsubsection*{Acknowledgements}

 Aur\'elien Alfonsi benefited from the support of the ``Chaire Risques Financiers'', Fondation du Risque. 

\bibliographystyle{plain}
\bibliography{biblio}

\begin{thebibliography}{10}

\bibitem{AgGo}
Ankush Agarwal and Emmanuel Gobet.
\newblock Finite variance unbiased estimation of stochastic differential
  equations, 2018.

\bibitem{AJK}
A.~Alfonsi, B.~Jourdain, and A.~Kohatsu-Higa.
\newblock Pathwise optimal transport bounds between a one-dimensional diffusion
  and its {E}uler scheme.
\newblock {\em Ann. Appl. Probab.}, 24(3):1049--1080, 2014.

\bibitem{Alfonsi}
Aur\'{e}lien Alfonsi.
\newblock High order discretization schemes for the {CIR} process: application
  to affine term structure and {H}eston models.
\newblock {\em Math. Comp.}, 79(269):209--237, 2010.

\bibitem{AHKH}
Aur\'{e}lien Alfonsi, Masafumi Hayashi, and Arturo Kohatsu-Higa.
\newblock Parametrix methods for one-dimensional reflected {SDE}s.
\newblock In {\em Modern problems of stochastic analysis and statistics},
  volume 208 of {\em Springer Proc. Math. Stat.}, pages 43--66. Springer, Cham,
  2017.

\bibitem{AnKH}
Patrik Andersson and Arturo Kohatsu-Higa.
\newblock Unbiased simulation of stochastic differential equations using
  parametrix expansions.
\newblock {\em Bernoulli}, 23(3):2028--2057, 2017.

\bibitem{BGR}
Vlad Bally, Dan Goreac, and Victor Rabiet.
\newblock Regularity and stability for the semigroup of jump diffusions with
  state-dependent intensity.
\newblock {\em Ann. Appl. Probab.}, 28(5):3028--3074, 2018.

\bibitem{BK-H}
Vlad Bally and Arturo Kohatsu-Higa.
\newblock A probabilistic interpretation of the parametrix method.
\newblock {\em Ann. Appl. Probab.}, 25(6):3095--3138, 2015.

\bibitem{BaRe}
Vlad Bally and Cl\'{e}ment Rey.
\newblock Approximation of {M}arkov semigroups in total variation distance.
\newblock {\em Electron. J. Probab.}, 21:Paper No. 12, 44, 2016.

\bibitem{BeRo}
Alexandros Beskos and Gareth~O. Roberts.
\newblock Exact simulation of diffusions.
\newblock {\em Ann. Appl. Probab.}, 15(4):2422--2444, 2005.

\bibitem{CoSa}
G.~M. Constantine and T.~H. Savits.
\newblock A multivariate {F}a\`a di {B}runo formula with applications.
\newblock {\em Trans. Amer. Math. Soc.}, 348(2):503--520, 1996.

\bibitem{FKHL}
Noufel {Frikha}, Arturo {Kohatsu-Higa}, and Libo {Li}.
\newblock {On the first hitting times of one dimensional elliptic diffusions}.
\newblock {\em arXiv e-prints}, page arXiv:1609.09327, Sep 2016.

\bibitem{Giles}
Michael~B. Giles.
\newblock Multilevel {M}onte {C}arlo path simulation.
\newblock {\em Oper. Res.}, 56(3):607--617, 2008.

\bibitem{HLTT}
Pierre Henry-Labord\`ere, Xiaolu Tan, and Nizar Touzi.
\newblock Unbiased simulation of stochastic differential equations.
\newblock {\em Ann. Appl. Probab.}, 27(6):3305--3341, 2017.

\bibitem{IW}
Nobuyuki Ikeda and Shinzo Watanabe.
\newblock {\em Stochastic differential equations and diffusion processes},
  volume~24 of {\em North-Holland Mathematical Library}.
\newblock North-Holland Publishing Co., Amsterdam; Kodansha, Ltd., Tokyo,
  second edition, 1989.

\bibitem{Jacobsen}
Martin Jacobsen.
\newblock {\em Point process theory and applications}.
\newblock Probability and its Applications. Birkh\"{a}user Boston, Inc.,
  Boston, MA, 2006.
\newblock Marked point and piecewise deterministic processes.

\bibitem{Kebaier}
Ahmed Kebaier.
\newblock Statistical {R}omberg extrapolation: a new variance reduction method
  and applications to option pricing.
\newblock {\em Ann. Appl. Probab.}, 15(4):2681--2705, 2005.

\bibitem{KunitaSF}
H.~Kunita.
\newblock Stochastic differential equations and stochastic flows of
  diffeomorphisms.
\newblock In {\em \'{E}cole d'\'{e}t\'{e} de probabilit\'{e}s de
  {S}aint-{F}lour, {XII}---1982}, volume 1097 of {\em Lecture Notes in Math.},
  pages 143--303. Springer, Berlin, 1984.

\bibitem{LePa}
Vincent Lemaire and Gilles Pag\`es.
\newblock Multilevel {R}ichardson-{R}omberg extrapolation.
\newblock {\em Bernoulli}, 23(4A):2643--2692, 2017.

\bibitem{McLeish}
Don McLeish.
\newblock A general method for debiasing a {M}onte {C}arlo estimator.
\newblock {\em Monte Carlo Methods Appl.}, 17(4):301--315, 2011.

\bibitem{Neveu}
J.~Neveu.
\newblock Arbres et processus de {G}alton-{W}atson.
\newblock {\em Ann. Inst. H. Poincar\'{e} Probab. Statist.}, 22(2):199--207,
  1986.

\bibitem{NiVi}
Syoiti Ninomiya and Nicolas Victoir.
\newblock Weak approximation of stochastic differential equations and
  application to derivative pricing.
\newblock {\em Appl. Math. Finance}, 15(1-2):107--121, 2008.

\bibitem{OTV}
Kojiro Oshima, Josef Teichmann, and Dejan Velu\v{s}\v{c}ek.
\newblock A new extrapolation method for weak approximation schemes with
  applications.
\newblock {\em Ann. Appl. Probab.}, 22(3):1008--1045, 2012.

\bibitem{Pages}
Gilles Pag\`es.
\newblock Multi-step {R}ichardson-{R}omberg extrapolation: remarks on variance
  control and complexity.
\newblock {\em Monte Carlo Methods Appl.}, 13(1):37--70, 2007.

\bibitem{RhGl}
Chang-Han Rhee and Peter~W. Glynn.
\newblock Unbiased estimation with square root convergence for {SDE} models.
\newblock {\em Oper. Res.}, 63(5):1026--1043, 2015.

\bibitem{Vihola}
Matti Vihola.
\newblock Unbiased estimators and multilevel monte carlo.
\newblock {\em Operations Research}, 66(2):448--462, 2018.

\end{thebibliography}

\end{document}